\DeclareRobustCommand{\rvdots}{%
  \vbox{
    \baselineskip4\p@\lineskiplimit\z@
    \kern-\p@
    \hbox{.}\hbox{.}\hbox{.}
  }}
\newcommand{\Mod}[1]{\ (\textup{mod}\ #1)}
\def\moverlay{\mathpalette\mov@rlay}
\def\mov@rlay#1#2{\leavevmode\vtop{%
   \baselineskip\z@skip \lineskiplimit-\maxdimen
   \ialign{\hfil$\m@th#1##$\hfil\cr#2\crcr}}}
\newcommand{\charfusion}[3][\mathord]{
    #1{\ifx#1\mathop\vphantom{#2}\fi
        \mathpalette\mov@rlay{#2\cr#3}
      }
    \ifx#1\mathop\expandafter\displaylimits\fi}
\newcommand{\cupdot}{\charfusion[\mathbin]{\cup}{\cdot}}
\theoremstyle{plain} %text of this environment is typesetted in italics
\newtheorem{theorem}{\indent\sc Theorem}[section]
\newtheorem{lemma}[theorem]{\indent\sc Lemma}
\newtheorem{corollary}[theorem]{\indent\sc Corollary}
\newtheorem{proposition}[theorem]{\indent\sc Proposition}
\theoremstyle{definition} %text of this environment is typesetted in roman letters
\newtheorem{definition}[theorem]{\indent\sc Definition}
\newtheorem{remark}[theorem]{\indent\sc Remark}
\newtheorem{thmx}{Theorem}
\def\address#1#2{\begingroup
\noindent\parbox[t]{7.8cm}{%
\small{\scshape\ignorespaces#1}\par\vskip1ex
\noindent\small{\itshape E-mail address}%
\/: #2\par\vskip4ex}\hfill%
\endgroup}%
\title{On some $p$-adic Galois representations and form class groups}
\author{
\textsc{Ho Yun Jung, Ja Kyung Koo, Dong Hwa Shin and Dong Sung Yoon} %names of authors
}
\date{} %leave empty
\begin{document}

\allowdisplaybreaks

\maketitle

%%%%%%%%%%%%%%% footnote %%%%%%%%%%%%%%%%
\footnote{ %2010 MSC numbers
2010 \textit{Mathematics Subject Classification}. Primary 11R37; Secondary 11E08, 11F80, 14K22, 11R29.}
\footnote{ %key words and phrases
\textit{Key words and phrases}. Class field theory,  elliptic curves, form class groups, Galois representations.} \footnote{
\thanks{
}
}
%%%%%%%%%%%%%%%%%%%%%%%%%%%%%%%%%%%%%%%%%

\begin{abstract}
Let $K$ be an imaginary quadratic field of discriminant $d_K$
with ring of integers $\mathcal{O}_K$.
When $K$ is different from $\mathbb{Q}(\sqrt{-1})$ and
$\mathbb{Q}(\sqrt{-3})$, we consider
a certain specific model for the elliptic curve $E_K$ with
$j(E_K)=j(\mathcal{O}_K)$ which is defined over $\mathbb{Q}(j(E_K))$.
In this paper, for each positive integer $N$ we compare the extension field of $\mathbb{Q}$
generated by the coordinates of $N$-torsion points on $E_K$ with the ray class field $K_{(N)}$
of $K$ modulo $N\mathcal{O}_K$. By using this result we investigate the image of a $p$-adic Galois representation
attached to $E_K$ for a prime $p$, in terms of class field theory.
Second, we construct the
definite form class group of discriminant $d_K$ and level $N$ which is isomorphic to $\mathrm{Gal}(K_{(N)}/\mathbb{Q})$.

\end{abstract}

\maketitle

\tableofcontents

\section {Introduction}

Let $K$ be an imaginary quadratic field of discriminant $d_K$, and let
$\mathcal{O}_K$ be its ring of integers.
The theory of complex multiplication shows
that the maximal abelian extension $K^\mathrm{ab}$ of $K$
can be generated by the singular values of some modular functions (\cite[Chapter 10]{Lang87}).
Furthermore, through the Shimura reciprocity law
one can connect the class field theory with the theory of modular functions
(\cite{Shimura}).
For a positive integer $N$,
let $K_{(N)}$ denote the ray class field of
$K$ modulo $N\mathcal{O}_K$.
In particular, $K_{(1)}$ is the Hilbert class field $H_K$ of $K$.
We let $W_{K,\,N}$ be a Cartan subgroup of
$\mathrm{GL}_2(\mathbb{Z}/N\mathbb{Z})$
assigned to the $(\mathbb{Z}/N\mathbb{Z})$-algebra
$\mathcal{O}_K/N\mathcal{O}_K$ stated in $\S$4 (\ref{Cartan}).
In \cite{Stevenhagen} Stevenhagen
improved the Shimura reciprocity law and expressed
$\mathrm{Gal}(K_{(N)}/H_K)$
as the quotient group of $W_{K,\,N}$
by the subgroup corresponding to the unit group $\mathcal{O}_K^\times$.
And, his work brings up a natural question
whether there is a $2$-dimensional representation
attached to an elliptic curve with complex multiplication
whose image in $\mathrm{GL}_2(\mathbb{Z}/N\mathbb{Z})$ is related to $W_{K,\,N}$.
\par
On the other hand, there are remarkable works on torsion
subgroups of elliptic curves with
complex multiplication defined over number fields
and on isogenies between such elliptic curves
devoted by Bourdon and Clark (\cite{B-C}, \cite{B-C2}),
Bourdon, Clark and Pollack (\cite{B-C-P}),
Bourdon, Clark and Stankewicz (\cite{B-C-S})
and Clark and Pollack (\cite{C-P}).
While in this paper, when $K\neq\mathbb{Q}(\sqrt{-1}),\,\mathbb{Q}(\sqrt{-3})$,
we shall focus on the elliptic curve given by
\begin{equation}\label{intromodel}
E_K~:~y^2=4x^3-\frac{J(J-1)}{27}x-J\left(\frac{J-1}{27}\right)^2\quad
\textrm{with}~J=\frac{1}{1728}\,j(E_K)=\frac{1}{1728}\,j(\mathcal{O}_K)
\end{equation}
and examine the
extension field $T_N$ of $\mathbb{Q}$ generated by
the coordinates of $N$-torsion points on $E_K$
as follows (Theorem \ref{TN}) $\colon$

\begin{thmx}\label{pre1}
Assume that $K$ is different from $\mathbb{Q}(\sqrt{-1})$
and $\mathbb{Q}(\sqrt{-3})$.
\begin{enumerate}
\item[\textup{(i)}]
If $N=2$ and $d_K\equiv0\Mod{4}$, then
$T_N$ is the maximal real subfield of $K_{(2)}$
with $[K_{(2)}:T_N]=2$.
\item[\textup{(ii)}] If $N=2$ and $d_K\equiv1\Mod{4}$, then
$T_N=K_{(2)}$.
\item[\textup{(iii)}] If $N\geq3$, then
$T_N$ is the extension field of $K_{(N)}$
generated by $Y_{\left[\begin{smallmatrix}0&\frac{1}{N}\end{smallmatrix}\right]}$
with $[T_N:K_{(N)}]\leq2$.
\end{enumerate}
\end{thmx}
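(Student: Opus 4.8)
The plan is to translate everything into the language of Fricke and Weber functions and then invoke the main theorem of complex multiplication. Fix $\tau\in K\cap\mathbb{H}$ with $\mathcal{O}_K=[\tau,1]$, so that the Weierstrass parametrization of the model (\ref{intromodel}) identifies the $N$-torsion point attached to $\mathbf{v}=[r_1\ r_2]\in\tfrac1N\mathbb{Z}^2/\mathbb{Z}^2$ with coordinates $(X_{\mathbf v},Y_{\mathbf v})$, where $X_{\mathbf v}$ and $Y_{\mathbf v}$ are the values at $\tau$ of the usual (even, weight $0$) and odd Fricke functions. The decisive feature of this model is that its invariants satisfy $g_2g_3/\Delta=1$; hence the standard Weber function $h_{E_K}(P)=(g_2g_3/\Delta)\,x(P)$ coincides with the $x$-coordinate, and each $X_{\mathbf v}$ is exactly a Weber value. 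By the first main theorem of complex multiplication (Shimura reciprocity in the form recalled in the introduction), $K_{(N)}=H_K\bigl(h_{E_K}(P):P\in E_K[N]\bigr)=H_K\bigl(X_{\mathbf v}:\mathbf v\bigr)$, while $H_K=K(j(\mathcal{O}_K))$. First I would record this identity, so that comparing $T_N$ with $K_{(N)}$ reduces to understanding the $y$-coordinates and the presence of $\sqrt{d_K}$.

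For $N\geq3$ I would argue that $\sqrt{d_K}\in T_N$. Consider the mod-$N$ representation $\rho_N\colon\mathrm{Gal}(\overline{\mathbb{Q}}/\mathbb{Q}(j(\mathcal{O}_K)))\to\mathrm{GL}_2(\mathbb{Z}/N\mathbb{Z})$ on $E_K[N]$; by the theory of CM its image lies in the normalizer of the Cartan subgroup $W_{K,\,N}$ (\ref{Cartan}), with $\rho_N^{-1}(W_{K,\,N})=\mathrm{Gal}(\overline{\mathbb{Q}}/H_K)$. Composing $\rho_N$ with the sign character of this normalizer (trivial on $W_{K,\,N}$, nontrivial on the other coset) yields a quadratic character of $\mathrm{Gal}(\overline{\mathbb{Q}}/\mathbb{Q}(j(\mathcal{O}_K)))$ whose fixed field is $H_K=K(j(\mathcal{O}_K))$; since this character factors through $\rho_N$, its kernel contains $\mathrm{Gal}(\overline{\mathbb{Q}}/T_N)$, so $\sqrt{d_K}\in T_N$. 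Consequently $H_K\subseteq T_N$, whence $K_{(N)}\subseteq T_N$ and in fact $T_N=H_K(E_K[N])$. Finally, an element of $W_{K,\,N}$ fixes every $X_{\mathbf v}$ precisely when it acts as $\pm1$ on $E_K[N]$, so $\mathrm{Gal}(T_N/K_{(N)})\cong\{\pm1\}\cap\mathrm{im}\,\rho_N$ has order at most $2$; as $-1\neq1$ in $(\mathcal{O}_K/N\mathcal{O}_K)^\times$ for $N\geq3$, a single $y$-coordinate such as $Y_{\left[\begin{smallmatrix}0&\frac1N\end{smallmatrix}\right]}$ (a square root over $K_{(N)}$ of a $K_{(N)}$-rational quantity) generates the whole extension, giving (iii).

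The case $N=2$ must be treated separately because every nonzero point of $E_K[2]$ has $y$-coordinate $0$; thus $T_2=\mathbb{Q}\bigl(j(\mathcal{O}_K),\text{roots of }4x^3-g_2x-g_3\bigr)$ is the splitting field of the $2$-division cubic over $\mathbb{Q}(j(\mathcal{O}_K))$, and $K_{(2)}=T_2(\sqrt{d_K})$. Here the dichotomy is dictated by the structure of $\mathcal{O}_K/2\mathcal{O}_K$ (equivalently by $d_K\bmod4$): the quadratic subextension $K(j(\mathcal{O}_K))/\mathbb{Q}(j(\mathcal{O}_K))$ is captured inside $T_2$ exactly when $d_K\equiv1\Mod{4}$, in which case $\sqrt{d_K}\in T_2$ and $T_2=K_{(2)}$, proving (ii). When $d_K\equiv0\Mod{4}$ the $2$-adic Cartan degenerates so that $\sqrt{d_K}\notin T_2$; here I would verify directly, using that the coefficients of the cubic lie in the real field $\mathbb{Q}(j(\mathcal{O}_K))$, that the three roots $e_1,e_2,e_3$ are real, so that $T_2\subset\mathbb{R}$. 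Since $d_K<0$, the field $K_{(2)}$ is then a CM field and $T_2$ is its maximal real subfield with $[K_{(2)}:T_2]=2$, which is (i).

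The main obstacle is the $N=2$ analysis: pinning down precisely when $\sqrt{d_K}$ lies in $T_2$ requires controlling the degenerate Cartan at the ramified prime $2$ together with the explicit action of complex conjugation on the chosen model, and proving the reality of the $2$-torsion $x$-coordinates in the case $d_K\equiv0\Mod{4}$. This is exactly where the congruence dichotomy in (i)--(ii) is forced, and it is the step most resistant to the uniform representation-theoretic argument available for $N\geq3$.
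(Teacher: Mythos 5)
Your overall skeleton is right, and your treatment of $N\geq3$ is a legitimate alternative to the paper's: you get $H_K\subseteq T_N$ from the fact that $\rho_N^{-1}(W_{K,\,N})=\mathrm{Gal}(\overline{\mathbb{Q}}/H_K)$ inside the normalizer of the Cartan, and you get $T_N=K_{(N)}\bigl(Y_{\left[\begin{smallmatrix}0&\frac1N\end{smallmatrix}\right]}\bigr)$ by observing that $\mathrm{Gal}(T_N/K_{(N)})\subseteq\{\pm I_2\}$ and that $-I_2$ negates the nonzero value $Y_{\left[\begin{smallmatrix}0&\frac1N\end{smallmatrix}\right]}$. This replaces the paper's two explicit computations (the sandwich argument showing $\mathbb{Q}(X_{\mathbf v}:\mathbf v)=K_{(N)}$, and the Siegel-function/modularity-criterion proof that the ratios $Y_{\mathbf u}/Y_{\mathbf v}$ lie in $K_{(N)}$) by a citation of the standard CM description of $\mathrm{im}\,\rho_N$. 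Be aware, though, that the hard half of the equality $\rho_N^{-1}(W_{K,\,N})=\mathrm{Gal}(\overline{\mathbb{Q}}/H_K)$ --- that elements not fixing $K$ land \emph{outside} the Cartan --- is exactly the computation of $\rho_N(\mathfrak{c})$, which the paper carries out via the conjugation formula $\overline{h_{\mathbf v}(\tau^{}_K)}=h_{\mathbf v\left[\begin{smallmatrix}1&\phantom{-}b_K\\0&-1\end{smallmatrix}\right]}(\tau^{}_K)$ for Fricke families; you are importing rather than proving the key input.

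The genuine gap is in the $N=2$ case, and it is not merely a matter of bookkeeping. For $d_K\equiv0\Mod 4$ you propose to ``verify directly, using that the coefficients of the cubic lie in the real field $\mathbb{Q}(j(\mathcal{O}_K))$, that the three roots $e_1,e_2,e_3$ are real.'' This does not follow: a cubic with real coefficients may have one real root and a conjugate complex pair, so reality of the coefficients proves nothing about reality of the individual $2$-torsion $x$-coordinates. What actually forces $T_2\subseteq\mathbb{R}$ is the identity $\overline{X}_{\mathbf v}=X_{\mathbf v\left[\begin{smallmatrix}1&\phantom{-}0\\0&-1\end{smallmatrix}\right]}=X_{\mathbf v}$ (using $b_K=0$ and the evenness of $\wp$), i.e.\ the same complex-conjugation formula for Fricke values; nothing in your writeup supplies it. Symmetrically, for $d_K\equiv1\Mod 4$ you assert that $\sqrt{d_K}\in T_2$ because ``the quadratic subextension is captured inside $T_2$,'' but give no argument; the paper gets it from $\overline{X}_{\left[\begin{smallmatrix}\frac12&0\end{smallmatrix}\right]}=X_{\left[\begin{smallmatrix}\frac12&\frac12\end{smallmatrix}\right]}\neq X_{\left[\begin{smallmatrix}\frac12&0\end{smallmatrix}\right]}$, which shows $T_2\not\subseteq\mathbb{R}$, combined with the degree count $[K_{(2)}:\mathbb{Q}(X_{\left[\begin{smallmatrix}0&\frac12\end{smallmatrix}\right]})]\leq2$ coming from $K\cdot\mathbb{Q}(X_{\left[\begin{smallmatrix}0&\frac12\end{smallmatrix}\right]})=K_{(2)}$. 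You correctly flag this dichotomy as the hard step, but flagging it is not the same as closing it: as written, parts (i) and (ii) remain unproved, and the method you sketch for (i) would fail. The single missing ingredient throughout is the explicit action of complex conjugation on the singular values $X_{\mathbf v}$, i.e.\ the paper's Lemma on Fricke families.
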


Recently, Lozano-Robledo (\cite{Lozano-Robledo})
has classified all possible images of $p$-adic Galois representations attached to elliptic curves $E$ with complex multiplication defined over $\mathbb{Q}(j(E))$,
from which he deduced an analogue of Serre's open image theorem
(\cite{Serre}).
Let $Q_0=x^2+b_Kxy+c_Ky^2$ be the principal form of discriminant $d_K$.
And, for a prime $p$ let $W_{K,\,p^\infty}$
be the inverse limit of $W_{K,\,p^n}$ ($n\geq1$) as a subgroup of $\mathrm{GL}_2(\mathbb{Z}_p)$.
We shall revisit a part of his work
by analyzing a representation of
$\mathrm{Gal}(T_{p^n}/\mathbb{Q}(j(E_K)))$
into $\mathrm{GL}_2(\mathbb{Z}/p^n\mathbb{Z})$ (Theorem \ref{main1}) $\colon$

\begin{thmx}\label{pre2}
Assume that $K$ is different from $\mathbb{Q}(\sqrt{-1})$ and
$\mathbb{Q}(\sqrt{-3})$. Let
\begin{equation*}
\widehat{W}_{K,\,p^\infty}=\left\langle
W_{K,\,p^\infty},\,
\begin{bmatrix}1&\phantom{-}b_K\\0&-1\end{bmatrix}
\right\rangle\quad(\subseteq\mathrm{GL}_2(\mathbb{Z}_p)).
\end{equation*}
Then, there exists a $p$-adic Galois representation
$\rho^{}_{p^\infty}:\mathrm{Gal}(\overline{\mathbb{Q}}/\mathbb{Q}(j(E_K)))\rightarrow\mathrm{GL}_2(\mathbb{Q}_p)$
whose image is a subgroup of $\widehat{W}_{K,\,p^\infty}$ of index $1$ or $2$.
In particular, if $p$ is an odd prime
for which it does not split in $K$ and
$[K_{(p)}:H_K]$ is even, then
the image of $\rho^{}_{p^\infty}$ coincides with
$\widehat{W}_{K,\,p^\infty}$.
\end{thmx}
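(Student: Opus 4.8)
The plan is to obtain $\rho^{}_{p^\infty}$ as the inverse limit of the mod-$p^n$ representations furnished by Theorem \ref{main1}, and then to transport the finite-level index bound to the limit. First I would fix, for every $n$, a $(\mathbb{Z}/p^n\mathbb{Z})$-basis of $E_K[p^n]$ adapted to the complex multiplication and compatible under the multiplication-by-$p$ maps $E_K[p^{n+1}]\to E_K[p^n]$; concretely the basis coming from $\tfrac{1}{p^n}\mathcal{O}_K/\mathcal{O}_K$ with respect to $\{1,\theta\}$, where $\theta$ is a root of the principal form $Q_0$. With these choices the action of $\mathrm{Gal}(\overline{\mathbb{Q}}/\mathbb{Q}(j(E_K)))$ on the Tate module $T_p(E_K)=\varprojlim_n E_K[p^n]$ defines $\rho^{}_{p^\infty}\colon\mathrm{Gal}(\overline{\mathbb{Q}}/\mathbb{Q}(j(E_K)))\to\mathrm{GL}_2(\mathbb{Z}_p)\subseteq\mathrm{GL}_2(\mathbb{Q}_p)$, whose reduction modulo $p^n$ is exactly the representation of Theorem \ref{main1}, after identifying $\mathrm{Gal}(T_{p^n}/\mathbb{Q}(j(E_K)))$ with the corresponding quotient. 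Since $\widehat{W}_{K,\,p^\infty}=\varprojlim_n\widehat{W}_{K,\,p^n}$ by definition, the image of $\rho^{}_{p^\infty}$ is the inverse limit of the finite-level images, each of which is contained in $\widehat{W}_{K,\,p^n}$.

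Next I would control the index in the limit. For each $n$ the image $G_n$ of $\rho^{}_{p^n}$ is, by Theorem \ref{main1}, a subgroup of $\widehat{W}_{K,\,p^n}$ of index at most $2$, hence normal; moreover, since the image over $H_K$ lies in the Cartan and the class of complex conjugation maps to the nontrivial coset of $\widehat{W}_{K,\,p^n}/W_{K,\,p^n}$, we have $G_nW_{K,\,p^n}=\widehat{W}_{K,\,p^n}$ and $[\widehat{W}_{K,\,p^n}:G_n]=[W_{K,\,p^n}:G_n^+]$, where $G_n^+=G_n\cap W_{K,\,p^n}$ is the image over $H_K$. Because $T_{p^n}\subseteq T_{p^{n+1}}$ the transition maps of the systems $\{\widehat{W}_{K,\,p^n}\}$ and $\{G_n\}$ are surjective, so the inverse limit is exact and $\widehat{W}_{K,\,p^\infty}/\mathrm{im}(\rho^{}_{p^\infty})\cong\varprojlim_n(\widehat{W}_{K,\,p^n}/G_n)$. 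The right-hand side is an inverse limit of groups of order at most $2$ and therefore has order at most $2$ (if some transition map vanishes the limit only collapses further, and otherwise the transitions are eventually isomorphisms). This gives $[\widehat{W}_{K,\,p^\infty}:\mathrm{im}(\rho^{}_{p^\infty})]\le 2$, proving the first assertion.

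For the final assertion I would show that under the stated hypotheses the index-$2$ alternative never occurs, so that $\mathrm{im}(\rho^{}_{p^\infty})=\widehat{W}_{K,\,p^\infty}$. By Stevenhagen's description $G_n^+$ surjects onto $\mathrm{Gal}(K_{(p^n)}/H_K)\cong W_{K,\,p^n}/\{\pm I\}$; combined with $[W_{K,\,p^n}:G_n^+]\le 2$ this forces $G_n^+\cdot\{\pm I\}=W_{K,\,p^n}$, so that $G_n^+=W_{K,\,p^n}$ precisely when $-I\in G_n^+$, and otherwise $[W_{K,\,p^n}:G_n^+]=2$ with $-I\notin G_n^+$. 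When $p$ is odd and does not split in $K$, the group $W_{K,\,p^n}=(\mathcal{O}_K/p^n\mathcal{O}_K)^\times$ has a unique element of order $2$, namely $-I$, and the hypothesis that $[K_{(p)}:H_K]=|W_{K,\,p}|/2$ be even is equivalent to $-I$ being a square in $W_{K,\,p^n}$ for every $n$. Consequently $-I$ lies in every subgroup of index $2$, which is incompatible with $-I\notin G_n^+$; hence the index-$2$ case cannot occur, $G_n^+=W_{K,\,p^n}$ for all $n$, and $\mathrm{im}(\rho^{}_{p^\infty})=\widehat{W}_{K,\,p^\infty}$.

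I expect the main obstacle to be twofold. The first is verifying that the CM-adapted bases can be chosen compatibly so that complex conjugation is represented by the single matrix $\left[\begin{smallmatrix}1&b_K\\0&-1\end{smallmatrix}\right]$ at every level; this is precisely where the specific model \eqref{intromodel} and the choice of $\theta$ enter, and it is what makes $\widehat{W}_{K,\,p^\infty}$ the correct ambient group. The second is the coherence of the index data under the inverse limit, ensuring that the finite-level bound of Theorem \ref{main1} does not deteriorate; this is handled by the exactness of $\varprojlim$ for surjective systems together with the elementary fact about limits of groups of order at most $2$. By comparison, the number-theoretic translation of the parity hypothesis into ``$-I$ is a square in $W_{K,\,p^n}$'' is routine once the non-split structure of $(\mathcal{O}_K/p^n\mathcal{O}_K)^\times$ is written down.
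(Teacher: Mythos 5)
Your proposal is correct, and while the construction of $\rho^{}_{p^\infty}$ and the limit bookkeeping essentially coincide with the paper's, your treatment of the ``in particular'' clause is genuinely different. On the first half: the paper also builds $\rho^{}_{p^\infty}$ by composing restriction to $\mathrm{Gal}(T_{p^\infty}/\mathbb{Q}(j(E_K)))$ with the inverse limit of the finite-level representations whose images are pinned down in Theorem \ref{subgroup} (note that this finite-level theorem, not Theorem \ref{main1} itself, is what your argument actually invokes), and it controls the index via Mittag--Leffler just as you do, except that it records the sharper identity $\widehat{W}_{K,\,p^n}=\langle\rho^{}_{p^n}(\mathrm{Gal}(T_{p^n}/\mathbb{Q}(j(E_K)))),\,-I_2\rangle$ from Remark \ref{not-I2} and a stabilization statement (Lemma \ref{directproduct}) in place of your ``inverse limit of order-$\leq 2$ quotients'' argument; both routes give index $1$ or $2$. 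Where you diverge is the last assertion. The paper's Lemma \ref{-I2} uses the parity hypothesis to produce an involution $\gamma$ in $\rho^{}_{p^n}(\mathrm{Gal}(T_{p^n}/H_K))\subseteq W_{K,\,p^n}$ and then a direct Cayley--Hamilton computation to show that any involution other than $-I_2$ in the Cartan would force $d_K$ to be a nonzero square modulo $p$, contradicting non-splitness. You instead show that under the same hypotheses $-I_2$ is a \emph{square} in $W_{K,\,p^n}\simeq(\mathcal{O}_K/p^n\mathcal{O}_K)^\times$ (the prime-to-$p$ part is cyclic in both the inert and ramified cases, and the evenness of $[K_{(p)}:H_K]$ is precisely $4\mid p^2-1$, resp.\ $4\mid p-1$), hence lies in every subgroup of index at most $2$, in particular in the image over $H_K$; combined with the surjection onto $W_{K,\,p^n}/\{\pm I_2\}$ this forces the image to be all of $W_{K,\,p^n}$. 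Both arguments are sound; yours trades the explicit matrix computation for the unit-group structure of $\mathcal{O}_K/p^n\mathcal{O}_K$ and makes more transparent exactly how the two hypotheses enter, while the paper's is more self-contained at the level of matrices. The ingredient you flag as the main obstacle --- that complex conjugation acts through $\pm\left[\begin{smallmatrix}1&\phantom{-}b_K\\0&-1\end{smallmatrix}\right]$ in a compatible way at every level --- is indeed the content of Theorem \ref{compleximage} and is already packaged into the finite-level image computation you cite.
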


It is worth noting that Bourdon and
Clark (\cite{B-C}) and Lozano-Robledo (\cite{Lozano-Robledo}) independently showed Theorem \ref{pre2}
in greater generality without
the assumption $K\neq\mathbb{Q}(\sqrt{-1}),\,
\mathbb{Q}(\sqrt{-3})$.
The main difference between their works and this paper is that
we apply the modularity criterion for Siegel functions
(\cite{K-L}, Proposition \ref{modularity}) and
the concept of a Fricke family (Remark \ref{Frickefamily})
to the special model of $E_K$ given in (\ref{intromodel})
in order to attain Theorem \ref{pre1} and Theorem \ref{pre2}.
\par
Next, we want to describe
finite Galois groups in view of form class groups
as Hasse (\cite{Hasse}) and Deuring (\cite{Deuring}) did.
For a negative integer $D$ such that $D\equiv0$ or $1\Mod{4}$,
let $\mathcal{Q}(D)$ be the set of primitive
positive definite binary quadratic forms of discriminant $D$.
The full modular group $\mathrm{SL}_2(\mathbb{Z})$
acts on the set $\mathcal{Q}(D)$ from the right and
gives rise to the proper equivalence $\sim$.
In \textit{Disquisitiones Arithmeticae}
(\cite{Gauss}) Gauss introduced a beautiful composition law
on $\mathcal{C}(D)=\mathcal{Q}(D)/\sim$, and later
Dirichlet (\cite{Dirichlet}) presented a different approach to
the study of composition and genus theory.
Letting $\mathcal{C}(\mathcal{O})$ be the ideal
class group of the order $\mathcal{O}$ of discriminant
$D$ in the imaginary quadratic field $\mathbb{Q}(\sqrt{D})$,
we also have the isomorphism
\begin{equation*}
\begin{array}{lcl}
\mathcal{C}(D)&\rightarrow&\mathcal{C}(O)\\
~~\mathrm{[}Q\mathrm{]} & \mapsto & [[\omega_Q,\,1]]=[\mathbb{Z}\omega_Q+\mathbb{Z}]]
\end{array}
\end{equation*}
where $Q(x,\,y)\in\mathcal{Q}(D)$ and $\omega_Q$ is the zero of $Q(x,\,1)$ in
the complex upper half-plane (\cite[Theorem 7.7]{Cox}).
In 2004, Bhargava (\cite{Bhargava}) derived a general
law of composition on $2\times2\times2$ cubes of integers, from which
he was able to obtain Gauss' composition law on $\mathcal{C}(D)$ as a simple special case.
\par
Let $K$ be an imaginary quadratic field of discriminant $d_K$.
For a positive integer $N$, let
\begin{equation*}
\mathcal{Q}_N(d_K)=\{Q=ax^2+bxy+cy^2\in\mathcal{Q}(d_K)~|~\gcd(a,\,N)=1\}.
\end{equation*}
Recently, Eum, Koo and Shin (\cite{E-K-S}) constructed
the extended form class group $\mathcal{C}_N(d_K)=\mathcal{Q}_N(d_K)/\sim^{}_N$,
where the equivalence relation $\sim^{}_N$ is induced from
the congruence subgroup $\Gamma_1(N)$. They
equipped $\mathcal{C}_N(d_K)$ with
a group structure so that
it is isomorphic to the ray class group $\mathrm{Cl}(N\mathcal{O}_K)$ of $K$ modulo $N\mathcal{O}_K$.
In this paper, we shall consider the set of definite binary quadratic forms
\begin{equation*}
\mathcal{Q}^\pm_N(d_K)=\{Q,\,-Q=(-1)Q~|~Q\in\mathcal{Q}_N(d_K)\}
\end{equation*}
with a naturally extended equivalence relation $\sim^{}_N$,
and set
\begin{equation*}
\mathcal{C}^\pm_N(d_K)=\mathcal{Q}^\pm_N(d_K)/\sim^{}_N=
\{[Q]^{}_N~|~Q\in\mathcal{Q}^\pm_N(d_K)\}.
\end{equation*}
Let $\mathfrak{c}$ be the complex conjugation on $\mathbb{C}$. Improving the result of \cite{E-K-S}
we shall show that $\mathcal{C}^\pm_N(d_K)$ can
be regarded as a group isomorphic to $\mathrm{Gal}(K_{(N)}/\mathbb{Q})$
(Theorem \ref{C+-}) $\colon$

\begin{thmx}\label{pre3}
The set $\mathcal{C}_N^\pm(d_K)$ can be given
a group structure isomorphic to $\mathrm{Gal}(K_{(N)}/\mathbb{Q})$ so that
it contains $\mathcal{C}_N(d_K)$ as a subgroup
and the element $[-Q_0]^{}_N$ corresponds to
$\mathfrak{c}|_{K_{(N)}}$.
\end{thmx}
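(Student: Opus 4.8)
The plan is to build an explicit bijection from $\mathcal{C}_N^\pm(d_K)$ onto $\mathrm{Gal}(K_{(N)}/\mathbb{Q})$ that extends the Eum--Koo--Shin isomorphism on the positive definite forms, and then to transport the group law through it.

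First I would record the two parallel ``index-two'' pictures. On the form side, since the $\mathrm{SL}_2(\mathbb{Z})$-action obeys $(-Q)\circ\gamma=-(Q\circ\gamma)$ and preserves definiteness, no positive definite form is $\Gamma_1(N)$-equivalent to a negative definite one; hence $Q\mapsto -Q$ descends to a fixed-point-free involution on $\mathcal{C}_N^\pm(d_K)$ and $|\mathcal{C}_N^\pm(d_K)|=2\,|\mathcal{C}_N(d_K)|$. On the arithmetic side, $K_{(N)}/\mathbb{Q}$ is Galois with $\mathrm{Gal}(K_{(N)}/K)$ of index $2$, and $\mathfrak{c}|_{K_{(N)}}$ restricts to the nontrivial automorphism of $K/\mathbb{Q}$, so it represents the nontrivial coset:
\begin{equation*}
\mathrm{Gal}(K_{(N)}/\mathbb{Q})=\mathrm{Gal}(K_{(N)}/K)\ \sqcup\ \mathfrak{c}|_{K_{(N)}}\cdot\mathrm{Gal}(K_{(N)}/K).
\end{equation*}

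Writing $[Q]_N\mapsto\sigma_Q$ for the Eum--Koo--Shin isomorphism $\mathcal{C}_N(d_K)\xrightarrow{\ \sim\ }\mathrm{Gal}(K_{(N)}/K)$, I would then set
\begin{equation*}
\phi:\mathcal{C}_N^\pm(d_K)\longrightarrow\mathrm{Gal}(K_{(N)}/\mathbb{Q}),\qquad
\phi\bigl([Q]_N\bigr)=\sigma_Q,\quad
\phi\bigl([-Q]_N\bigr)=\sigma_Q\cdot\mathfrak{c}|_{K_{(N)}}.
\end{equation*}
Well-definedness on the negative definite classes is immediate from the $\Gamma_1(N)$-equivariance of negation together with the Eum--Koo--Shin result, and injectivity and surjectivity follow coset by coset from the decomposition above; a count against $|\mathcal{C}_N^\pm(d_K)|=2|\mathcal{C}_N(d_K)|$ confirms that $\phi$ is a bijection. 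Declaring the product on $\mathcal{C}_N^\pm(d_K)$ to be the one pulled back along $\phi$ makes $\phi$ an isomorphism by construction, realizes $\mathcal{C}_N(d_K)=\phi^{-1}\bigl(\mathrm{Gal}(K_{(N)}/K)\bigr)$ as a subgroup, and, because $Q_0$ is principal so that $\sigma_{Q_0}=\mathrm{id}$, gives $\phi\bigl([-Q_0]_N\bigr)=\mathfrak{c}|_{K_{(N)}}$, as demanded.

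The step requiring genuine care is the explicit understanding of $\mathfrak{c}|_{K_{(N)}}$ that turns this transported law into one intrinsic to forms. Using the complex multiplication description already invoked in the earlier theorems, where complex conjugation is realized by $\left[\begin{smallmatrix}1&b_K\\0&-1\end{smallmatrix}\right]$ acting as $\tau\mapsto-\tau-b_K$ and sending $\theta_K=\tfrac{-b_K+\sqrt{d_K}}{2}$ to $\overline{\theta_K}$, I would verify the conjugation relation $\mathfrak{c}\,\sigma_{\mathfrak{a}}\,\mathfrak{c}^{-1}=\sigma_{\overline{\mathfrak{a}}}$ and translate it back through Eum--Koo--Shin. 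The delicate point is that $\sigma_{\overline{\mathfrak{a}}}$ must be matched with the \emph{opposite} form $ax^2-bxy+cy^2$ of $Q=ax^2+bxy+cy^2$, and one has to check that this opposite form again lies in $\mathcal{Q}_N(d_K)$ and that the assignment respects $\sim_N$; in the ray class setting $[\overline{\mathfrak{a}}]_N$ is \emph{not} $[\mathfrak{a}]_N^{-1}$ (the two differ by the class of the rational integer $N(\mathfrak{a})$), so the products $[-Q_1]_N[-Q_2]_N$ and $[-Q_1]_N[Q_2]_N$ can be expressed as honest classes of definite forms only after this conjugate-form bookkeeping is carried out. This identification of $\mathfrak{c}$ with negation of the principal form, rather than any of the formal verifications, is where I expect the real work to lie.
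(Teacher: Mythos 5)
Your proposal is correct and follows essentially the same route as the paper: both split $\mathcal{C}_N^\pm(d_K)$ by sign into two copies of $\mathcal{C}_N(d_K)$, match these with the two cosets of $\mathrm{Gal}(K_{(N)}/K)$ in $\mathrm{Gal}(K_{(N)}/\mathbb{Q})$ using $\mathfrak{c}|_{K_{(N)}}$ (your $\phi$ is exactly the paper's $\phi_N^\pm$ in (\ref{phi+-})), and transport the group law through the resulting bijection. The one substantive point you assert rather than prove is that $K_{(N)}/\mathbb{Q}$ is Galois, which the paper establishes in Lemma \ref{semi} by checking $\mathfrak{c}(K_{(N)})=K_{(N)}$ via the existence theorem of class field theory; your closing paragraph about an intrinsic, form-level description of the product (conjugate ideals versus opposite forms) is genuine but goes beyond what the theorem as stated requires, since the paper likewise contents itself with the pulled-back group structure.
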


By $\mathcal{F}_N$ we mean the field of meromorphic modular functions
for the principal congruence subgroup $\Gamma(N)$
whose Fourier coefficients belong to the $N$th cyclotomic field.
We shall define the definite form class invariants
$f([Q]^{}_N)$ for $Q\in\mathcal{Q}^\pm_N(d_K)$ and $f\in\mathcal{F}_N$
(Definition \ref{definvariant}), and
show that these invariants satisfy
a natural transformation rule via
the isomorphism $\phi^\pm_N$ in (\ref{phi+-}) established in the proof of Theorem \ref{pre3}
(Theorem \ref{pmQQ}) $\colon$

\begin{thmx}\label{pre4}
Let $Q,\,Q'\in\mathcal{Q}^\pm_N(d_K)$ and $f\in\mathcal{F}_N$.
If $f([Q]^{}_N)$ is finite, then
\begin{equation*}
\phi^\pm_N([Q']^{}_N)(f([Q]^{}_N))=
f([Q']^{}_N[Q]^{}_N).
\end{equation*}
\end{thmx}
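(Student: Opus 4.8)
The plan is to exploit that $\phi^\pm_N$ is a group isomorphism and that both sides of the asserted identity are multiplicative in $Q'$. Writing $\sigma^{}_{Q'}=\phi^\pm_N([Q']^{}_N)$, for fixed $f$ and $Q$ we have the chaining relation $\sigma^{}_{Q'_1Q'_2}(f([Q]^{}_N))=\sigma^{}_{Q'_1}(\sigma^{}_{Q'_2}(f([Q]^{}_N)))$, so that once the transformation rule is known for two classes $[Q'_1]^{}_N,\,[Q'_2]^{}_N$ (for \emph{all} $Q$ and $f$) it follows for their product. Since $\mathcal{C}^\pm_N(d_K)$ is generated by the subgroup $\mathcal{C}_N(d_K)$ together with the single class $[-Q_0]^{}_N$ (Theorem \ref{pre3}), it suffices to verify the formula when $Q'\in\mathcal{Q}_N(d_K)$ is positive definite and when $Q'=-Q_0$, in each case for an arbitrary $Q\in\mathcal{Q}^\pm_N(d_K)$ and $f\in\mathcal{F}_N$ with $f([Q]^{}_N)$ finite.

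The essential new case is $Q'=-Q_0$, where $\phi^\pm_N([-Q_0]^{}_N)=\mathfrak{c}|^{}_{K_{(N)}}$ by Theorem \ref{pre3}. Because $f([Q]^{}_N)$, when finite, is an element of $K_{(N)}\subseteq\mathbb{C}$, the left-hand side is literally the complex conjugate $\overline{f([Q]^{}_N)}$, and this case applies uniformly to every $Q$. I would compute it through the $q$-expansion: for $f=\sum_n a_n q^{n/N}\in\mathcal{F}_N$ with $a_n\in\mathbb{Q}(\zeta_N)$ and $q=e^{2\pi i\tau}$ one has $\overline{f(\tau)}=f^{c}(-\overline{\tau})$, where $c=\left[\begin{smallmatrix}1&0\\0&-1\end{smallmatrix}\right]$ realizes the action $\zeta_N\mapsto\zeta_N^{-1}$ of complex conjugation on the Fourier coefficients. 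Evaluating at the CM point $\tau=\omega^{}_Q$ and using that $-\overline{\omega^{}_Q}$ differs from the CM point of the relevant form by an integral translation — for the principal form this is the shift $\omega^{}_{Q_0}\mapsto\omega^{}_{Q_0}+b_K$, so that $c$ combines with the translation into $\left[\begin{smallmatrix}1&b_K\\0&-1\end{smallmatrix}\right]$, precisely the matrix appearing in Theorem \ref{pre2} — I would match the resulting $\mathrm{GL}_2(\mathbb{Z}/N\mathbb{Z})$-datum against the datum of Definition \ref{definvariant} assigned to $[-Q_0]^{}_N[Q]^{}_N$, thereby identifying $\overline{f([Q]^{}_N)}$ with $f([-Q_0]^{}_N[Q]^{}_N)$.

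For positive definite $Q'$ the element $\sigma^{}_{Q'}$ lies in $\mathrm{Gal}(K_{(N)}/K)$, and when $Q$ is also positive definite the identity is the Shimura-type transformation rule for the extended form class group $\mathcal{C}_N(d_K)$ of \cite{E-K-S}, applied to the value $f([Q]^{}_N)=f^{\,\text{(datum)}}(\omega^{}_Q)$ of Definition \ref{definvariant}. To reach a negative definite $Q=-Q_1$ I would use the generalized dihedral structure of $\mathrm{Gal}(K_{(N)}/\mathbb{Q})$: from the previous paragraph $f([-Q_1]^{}_N)=\overline{f([Q_1]^{}_N)}=\mathfrak{c}(f([Q_1]^{}_N))$, whence $\sigma^{}_{Q'}(f([-Q_1]^{}_N))=\sigma^{}_{Q'}\mathfrak{c}(f([Q_1]^{}_N))=\mathfrak{c}\,\sigma^{-1}_{Q'}(f([Q_1]^{}_N))$ via $\sigma^{}_{Q'}\,\mathfrak{c}=\mathfrak{c}\,\sigma^{-1}_{Q'}$. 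Applying the positive-form rule to $\sigma^{-1}_{Q'}$ and then the conjugation case again converts this into $f([-Q_0]^{}_N[{Q'}^{-1}Q_1]^{}_N)$, which equals $f([Q']^{}_N[-Q_1]^{}_N)$ after the dihedral simplification $[Q']^{}_N[-Q_0]^{}_N=[-Q_0]^{}_N[Q']^{-1}_N$. Together with the chaining reduction of the first paragraph, this settles all $(Q',Q)$.

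The main obstacle I anticipate is the bookkeeping forced by this generalized dihedral structure: complex conjugation inverts the $K$-part, so the composite $[-Q_0]^{}_N[Q]^{}_N$ interchanges the positive and negative strata of $\mathcal{Q}^\pm_N(d_K)$ and replaces the class of $Q$ by its inverse. I must therefore track signs and the non-commutativity with care, checking that the matrix datum emerging from the conjugation identity agrees with the one prescribed by the group law of Theorem \ref{pre3} for every sign combination of $Q$ and $Q'$, and that the whole construction is independent of the chosen representative form in each class and of the choice of $\sqrt{d_K}$ used to define $\omega^{}_Q$. Once these normalizations are pinned down consistently, the reduction to generators closes the argument.
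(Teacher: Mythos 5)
Your reduction to the generators $\mathcal{C}_N(d_K)\cup\{[-Q_0]^{}_N\}$ is legitimate, and your case $Q'=-Q_0$ is correct --- though the $q$-expansion computation you plan there is unnecessary: once Definition \ref{definvariant} is in place, $\overline{f([Q]^{}_N)}=f([-Q_0]^{}_N[Q]^{}_N)$ follows formally from $[-Q_0]_N^2=[Q_0]^{}_N$ and the two clauses of that definition, with no analytic input. The genuine gap is in your third paragraph: $\mathrm{Gal}(K_{(N)}/\mathbb{Q})$ is \emph{not} generalized dihedral for $N\geq2$. Conjugation by $\mathfrak{c}$ carries the Artin symbol of $\mathfrak{a}$ to that of $\overline{\mathfrak{a}}$, and in $\mathrm{Cl}(N\mathcal{O}_K)$ one has $[\overline{\mathfrak{a}}]=[N(\mathfrak{a})\mathcal{O}_K]\,[\mathfrak{a}]^{-1}$, where the class of the rational ideal $N(\mathfrak{a})\mathcal{O}_K$ is trivial only when $N(\mathfrak{a})\equiv\pm1\Mod{N}$. (Equivalently: restricted to $\mathbb{Q}(\zeta^{}_N)$, conjugation by $\mathfrak{c}$ is the identity while inversion is not, as soon as $(\mathbb{Z}/N\mathbb{Z})^\times$ has exponent greater than $2$.) Consequently your relations $\sigma^{}_{Q'}\mathfrak{c}=\mathfrak{c}\,\sigma_{Q'}^{-1}$ and $[Q']^{}_N[-Q_0]^{}_N=[-Q_0]^{}_N[Q']_N^{-1}$ fail in general, and so does the identity $f([-Q_1]^{}_N)=\overline{f([Q_1]^{}_N)}$ you deduce from them: by Definition \ref{definvariant}, $f([-Q_1]^{}_N)=\overline{f([-Q_0]^{}_N[-Q_1]^{}_N)}$, and $[-Q_0]^{}_N[-Q_1]^{}_N$ corresponds under (\ref{phi+-}) to the $\mathfrak{c}$-conjugate of $\phi^{}_N([Q_1]^{}_N)$, not to $[Q_1]^{}_N$ itself.

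The repair is to replace inversion by conjugation: write $\sigma^{}_{Q'}\,\mathfrak{c}=\mathfrak{c}\cdot\phi^\pm_N\bigl([-Q_0]^{}_N[Q']^{}_N[-Q_0]^{}_N\bigr)$, where $[-Q_0]^{}_N[Q']^{}_N[-Q_0]^{}_N$ is again positive definite by Remark \ref{sgnproduct}, and then apply the positive-definite transformation rule (Remark \ref{extendedQQ}) to this conjugated class acting on $f([-Q_0]^{}_N[Q]^{}_N)$. This is exactly what the paper's proof does, in one uniform computation with no case split: expand $f([Q]^{}_N)$ via Definition \ref{definvariant}, move $\phi^\pm_N([Q']^{}_N)$ past the appropriate power of $\phi^\pm_N([-Q_0]^{}_N)$ at the cost of such a conjugation, invoke Remark \ref{extendedQQ} for the resulting pair of positive definite classes, and collapse everything with $[-Q_0]_N^2=[Q_0]^{}_N$. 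Your generator-by-generator strategy can be salvaged along the same lines, but only after the dihedral relations are discarded.
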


Note that Theorem \ref{pre4} is a generalization of
the transformation formula of Siegel-Ramachandra invariants,
defined by the singular values of Siegel functions,
via the Artin map (\cite{Siegel}
or \cite[Chapter 11]{K-L}).

\section {Elliptic curves with complex multiplication}\label{Sect2}

In this section we shall find models for elliptic curves with complex multiplication
which will be used for Theorem \ref{pre2} on
$p$-adic Galois representations.
\par
For a lattice $\Lambda$ in $\mathbb{C}$, let $E/\mathbb{C}$ be
an elliptic curve that is complex analytically isomorphic to $\mathbb{C}/\Lambda$.
Then $E$ has an affine model in Weierstrass form
\begin{equation}\label{model}
E~:~y^2=4x^3-g^{}_2x-g^{}_3
\end{equation}
where
\begin{equation*}
g^{}_2=g^{}_2(\Lambda)=60\sum_{\omega\in\Lambda\setminus\{0\}}\frac{1}{\omega^4}
\quad\textrm{and}\quad g^{}_3=g^{}_3(\Lambda)=140\sum_{\omega\in\Lambda\setminus\{0\}}\frac{1}{\omega^6}.
\end{equation*}
Furthermore, the map
\begin{equation}\label{isomorphism}
\begin{array}{ccc}
\mathbb{C}/\Lambda&\rightarrow&E(\mathbb{C})\subset\mathbb{P}^2(\mathbb{C})\\
z+\Lambda&\mapsto&[\wp(z;\,\Lambda):\wp'(z;\,\Lambda):1]
\end{array}
\end{equation}
is an isomorphism of complex Lie groups, where
\begin{equation*}
\wp(z;\,\Lambda)=\frac{1}{z^2}+\sum_{\omega\in\Lambda\setminus\{0\}}
\left(
\frac{1}{(z-\omega)^2}-\frac{1}{\omega^2}\right)\quad(z\in\mathbb{C})
\end{equation*}
is the Weierstrass $\wp$-function which
is an even elliptic function for $\Lambda$.
Letting $j=j(E)=j(\Lambda)$, the $j$-invariant of $E$, we set
\begin{equation*}
J=\frac{1}{1728}{j}=\frac{g_2^3}{\Delta}\quad\textrm{with}~
\Delta=\Delta(\Lambda)=g_2^3-27g_3^2~(\neq0).
\end{equation*}
In particular, if $J\neq0,\,1$ and so $g^{}_2,\,g^{}_3\neq0$, then one can obtain by (\ref{model}) and (\ref{isomorphism}) another model and parametrization of $E$
in such a way that
\begin{equation}\label{reparametrization}
\begin{array}{ccc}
\mathbb{C}/\Lambda&\stackrel{\sim}{\rightarrow}&E(\mathbb{C})~:~
\displaystyle y^2=4x^3-\frac{J(J-1)}{27}x-J\left(\frac{J-1}{27}\right)^2\vspace{0.1cm}\\
z+\Lambda&\mapsto&\displaystyle\left[
\frac{g^{}_2g^{}_3}{\Delta}\wp(z;\,\Lambda):\sqrt{\left(
\frac{g^{}_2g^{}_3}{\Delta}\right)^3}\wp'(z;\,\Lambda):1\right].
\end{array}
\end{equation}
Here, we take the principal branch for $\displaystyle\sqrt{\left(\frac{g^{}_2g^{}_3}{\Delta}\right)^3}$.
\par
Let $K$ be an imaginary quadratic field
and
$\mathcal{O}_K$ be its ring of integers.
Let
\begin{equation*}
\tau^{}_K=\left\{
\begin{array}{cl}
\displaystyle\frac{-1+\sqrt{d_K}}{2} & \textrm{if}~d_K\equiv1\Mod{4},\vspace{0.1cm}\\
\displaystyle\frac{\sqrt{d_K}}{2} & \textrm{if}~d_K\equiv0\Mod{4},
\end{array}\right.
\end{equation*}
and so $\mathcal{O}_K=[\tau^{}_K,\,1]=\mathbb{Z}\tau^{}_K+\mathbb{Z}$.
If we put $\Lambda=\mathcal{O}_K$, then $E$ has complex multiplication by $\mathcal{O}_K$
(\cite[Theorem 4.1 in Chapter VI]{Silverman09}).
Moreover, if $K$ is different from $\mathbb{Q}(\sqrt{-1})$ and $\mathbb{Q}(\sqrt{-3})$, then $J(\mathcal{O}_K)\neq0,\,1$ by the homogeneity of
$g^{}_2(\Lambda)$ and $g^{}_3(\Lambda)$ (\cite[p. 193]{Cox}).
In this case, we denote by $E_K$ the elliptic curve
with the model and parametrization described in (\ref{reparametrization}).
For
$\mathbf{v}=\begin{bmatrix}v_1&v_2\end{bmatrix}\in M_{1,\,2}(\mathbb{Q})\setminus
M_{1,\,2}(\mathbb{Z})$, let
\begin{eqnarray}
X_\mathbf{v}&=&\frac{g^{}_2(\mathcal{O}_K)g^{}_3(\mathcal{O}_K)}
{\Delta(\mathcal{O}_K)}\wp(v_1\tau^{}_K+v_2;\,\mathcal{O}_K),\label{Xv}\\
Y_\mathbf{v}&=&\sqrt{\left(
\frac{g^{}_2(\mathcal{O}_K)g^{}_3(\mathcal{O}_K)}{\Delta(\mathcal{O}_K)}\right)^3}\wp'(v_1\tau^{}_K+v_2;\,\mathcal{O}_K).
\label{Yv}
\end{eqnarray}
For convenience, we set
\begin{equation*}
X_\mathbf{v}=0,~
Y_\mathbf{v}=1\quad\textrm{for}~\mathbf{v}\in M_{1,\,2}(\mathbb{Z}).
\end{equation*}
By the fundamental properties of the Weierstrass $\wp$-function,
we get the following lemma.

\begin{lemma}\label{basicproperties}
Let $\mathbf{u},\,\mathbf{v}\in M_{1,\,2}(\mathbb{Q})
\setminus M_{1,\,2}(\mathbb{Z})$.
\begin{enumerate}
\item[\textup{(i)}]
$X_\mathbf{u}=X_\mathbf{v}$ if and only if
$\mathbf{u}\equiv
\pm\mathbf{v}\Mod{M_{1,\,2}(\mathbb{Z})}$.
\item[\textup{(ii)}]
$Y_{-\mathbf{v}}=-Y_\mathbf{v}$ and
$Y_{\mathbf{v}+\mathbf{n}}=Y_\mathbf{v}$ for $\mathbf{n}\in
M_{1,\,2}(\mathbb{Z})$.
\item[\textup{(iii)}]
$Y_\mathbf{v}=0$ if and only if
$2\mathbf{v}\in M_{1,\,2}(\mathbb{Z})$.
\end{enumerate}
\end{lemma}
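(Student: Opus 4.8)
The plan is to reduce each of the three assertions to a standard property of the Weierstrass $\wp$-function and its derivative on the lattice $\Lambda=\mathcal{O}_K$, using throughout that the two scaling constants $g^{}_2(\mathcal{O}_K)g^{}_3(\mathcal{O}_K)/\Delta(\mathcal{O}_K)$ and $\sqrt{(g^{}_2(\mathcal{O}_K)g^{}_3(\mathcal{O}_K)/\Delta(\mathcal{O}_K))^3}$ are nonzero; this holds because $J(\mathcal{O}_K)\neq0,\,1$ forces $g^{}_2,\,g^{}_3,\,\Delta\neq0$. The bridge between a row vector $\mathbf{v}=\begin{bmatrix}v_1&v_2\end{bmatrix}$ and a complex argument is the map $\mathbf{v}\mapsto v_1\tau^{}_K+v_2$. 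Since $\{\tau^{}_K,\,1\}$ is a $\mathbb{Z}$-basis of $\mathcal{O}_K$ (and is $\mathbb{Q}$-linearly independent), I would first record the elementary dictionary: $\mathbf{u}\equiv\pm\mathbf{v}\Mod{M_{1,\,2}(\mathbb{Z})}$ if and only if $u_1\tau^{}_K+u_2\equiv\pm(v_1\tau^{}_K+v_2)\Mod{\mathcal{O}_K}$, and $2\mathbf{v}\in M_{1,\,2}(\mathbb{Z})$ if and only if $2(v_1\tau^{}_K+v_2)\in\mathcal{O}_K$. The hypothesis $\mathbf{u},\,\mathbf{v}\notin M_{1,\,2}(\mathbb{Z})$ guarantees that the arguments lie off the lattice, so no pole of $\wp$ or $\wp'$ is ever encountered.

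For (i), I would invoke the fact that $\wp(\,\cdot\,;\mathcal{O}_K)$ is an even elliptic function of order two for $\mathcal{O}_K$, so that for arguments not in the lattice it separates points exactly up to sign and translation by $\mathcal{O}_K$, namely $\wp(z_1)=\wp(z_2)$ precisely when $z_1\equiv\pm z_2\Mod{\mathcal{O}_K}$. Because the scaling constant is nonzero, $X_\mathbf{u}=X_\mathbf{v}$ is equivalent to $\wp(u_1\tau^{}_K+u_2;\,\mathcal{O}_K)=\wp(v_1\tau^{}_K+v_2;\,\mathcal{O}_K)$, which the dictionary converts to $\mathbf{u}\equiv\pm\mathbf{v}\Mod{M_{1,\,2}(\mathbb{Z})}$. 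For (ii), both claims follow from the corresponding properties of $\wp'$: it is odd, whence $Y_{-\mathbf{v}}=-Y_\mathbf{v}$, and it is $\mathcal{O}_K$-periodic, so that adding $\mathbf{n}\in M_{1,\,2}(\mathbb{Z})$ shifts the argument by $n_1\tau^{}_K+n_2\in\mathcal{O}_K$ and leaves $Y_\mathbf{v}$ unchanged.

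For (iii), the nonzero constant again reduces the vanishing of $Y_\mathbf{v}$ to that of $\wp'(v_1\tau^{}_K+v_2;\,\mathcal{O}_K)$, so it suffices to characterize the zeros of $\wp'$. The main point requiring genuine care, rather than a one-line appeal to a textbook fact, is precisely this characterization: one must know that $\wp'$ vanishes exactly at the three nontrivial half-periods. The cleanest justification is that $\wp'$ is an odd elliptic function of order three with a single triple pole at the origin, hence it has exactly three zeros modulo $\mathcal{O}_K$ in a fundamental parallelogram; each nonzero half-period $w$ (i.e. $2w\in\mathcal{O}_K$, $w\notin\mathcal{O}_K$) is a zero, since oddness and periodicity give $\wp'(w)=\wp'(-w)=-\wp'(w)$, and a count shows these three half-periods exhaust the zeros. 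Combining this with the dictionary and with the hypothesis $\mathbf{v}\notin M_{1,\,2}(\mathbb{Z})$ (which ensures $v_1\tau^{}_K+v_2\notin\mathcal{O}_K$), I conclude that $Y_\mathbf{v}=0$ if and only if $2(v_1\tau^{}_K+v_2)\in\mathcal{O}_K$, that is, $2\mathbf{v}\in M_{1,\,2}(\mathbb{Z})$. Everything beyond the half-period count is a formal consequence of evenness, oddness, periodicity, and the nonvanishing of the scaling constants.
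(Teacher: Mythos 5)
Your proof is correct and is essentially the paper's own approach: the paper proves this lemma simply by citing the standard properties of the Weierstrass $\wp$-function in Cox, \S 10.A, and your argument supplies exactly those facts (evenness and order two of $\wp$ for (i), oddness and periodicity of $\wp'$ for (ii), and the half-period characterization of the zeros of $\wp'$ for (iii)), together with the correct observation that the scaling constants are nonzero because $J(\mathcal{O}_K)\neq0,\,1$.
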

\begin{proof}
See \cite[$\S$10.A]{Cox}.
\end{proof}

For a positive integer $N$, let $E_K[N]$ be the subgroup of $E_K(\mathbb{C})$
consisting of $N$-torsion points. Then we have
\begin{eqnarray*}
E_K[N]&=&
\{[X_\mathbf{0}:Y_\mathbf{0}:0]\}\cup
\{[X_\mathbf{v}:Y_\mathbf{v}:1]~|~\mathbf{v}\neq\mathbf{0}~
\textrm{and}~Nv_1,\,Nv_2\in\{0,\,1,\,\ldots,\,N-1\}\}\\
&=&\{[X_\mathbf{0}:Y_\mathbf{0}:0]\}\cup\left\{
[X_\mathbf{v}:Y_\mathbf{v}:1]~|~\mathbf{v}\in\frac{1}{N}M_{1,\,2}(\mathbb{Z})\setminus
M_{1,\,2}(\mathbb{Z})\right\}.
\end{eqnarray*}

\section {Modular functions}

We shall introduce some meromorphic modular functions which
will help us examine the extension field of $\mathbb{Q}$ generated by
the coordinates of $N$-torsion points on $E_K$.
\par
The modular group $\mathrm{SL}_2(\mathbb{Z})$ acts on the complex upper half-plane
$\mathbb{H}$ by fractional linear transformations, that is,
\begin{equation*}
\gamma(\tau)=\frac{a\tau+b}{c\tau+d}\quad\left(\textrm{with}~\gamma=
\begin{bmatrix}a&b\\c&d\end{bmatrix}\in\mathrm{SL}_2(\mathbb{Z}),~
\tau\in\mathbb{H}\right).
\end{equation*}
Let $j$ be the elliptic modular function defined on $\mathbb{H}$, namely,
\begin{equation*}
j(\tau)=j([\tau,\,1])\quad(\tau\in\mathbb{H}).
\end{equation*}

\begin{lemma}\label{jLemma}
The map
$\mathrm{SL}_2(\mathbb{Z})\backslash\mathbb{H}\rightarrow\mathbb{C}$
sending an orbit of $\tau$ to $j(\tau)$
is a well-defined bijection.
\end{lemma}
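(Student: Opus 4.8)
First I would dispose of well-definedness, which is the easy half. By definition $j(\tau)=j([\tau,1])$ depends only on the lattice $[\tau,1]=\mathbb{Z}\tau+\mathbb{Z}$, and that only through its homothety class, since $g_2(\lambda\Lambda)=\lambda^{-4}g_2(\Lambda)$, $g_3(\lambda\Lambda)=\lambda^{-6}g_3(\Lambda)$ give $J=g_2^3/\Delta$ invariant under scaling. For $\gamma=\begin{bmatrix}a&b\\c&d\end{bmatrix}\in\mathrm{SL}_2(\mathbb{Z})$ the pair $\{a\tau+b,\,c\tau+d\}$ is again a $\mathbb{Z}$-basis of $[\tau,1]$, so $(c\tau+d)[\gamma\tau,1]=[\tau,1]$, whence $[\gamma\tau,1]=(c\tau+d)^{-1}[\tau,1]$ and $j(\gamma\tau)=j(\tau)$. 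Thus the assignment descends to a well-defined map on $\mathrm{SL}_2(\mathbb{Z})\backslash\mathbb{H}$.

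For bijectivity the plan is, for each fixed $c\in\mathbb{C}$, to count the $\mathrm{SL}_2(\mathbb{Z})$-inequivalent solutions of $j(\tau)=c$ by complex analysis. I would first recall the $q$-expansion $j(\tau)=q^{-1}+744+O(q)$ with $q=e^{2\pi i\tau}$, which shows $j$ is holomorphic on $\mathbb{H}$ with a simple pole at the cusp. Then I would integrate $\frac{1}{2\pi i}\,(j-c)'/(j-c)$ around the boundary of the standard fundamental domain $\mathcal{F}=\{\tau\in\mathbb{H}:|\mathrm{Re}\,\tau|\leq\tfrac12,\ |\tau|\geq1\}$, truncated at a large height $\mathrm{Im}\,\tau=T$ and with small arcs excised around the elliptic points $i$ and $\rho=e^{2\pi i/3}$. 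The two vertical edges cancel by invariance under $T:\tau\mapsto\tau+1$, the two unit-circle arcs are matched by $S:\tau\mapsto-1/\tau$, and the top segment picks up the cusp via the $q$-expansion; the excised arcs produce the weights $\tfrac12$ and $\tfrac13$. The upshot is the valence formula
\[
\mathrm{ord}_\infty(j-c)+\tfrac12\mathrm{ord}_i(j-c)+\tfrac13\mathrm{ord}_\rho(j-c)+\sum_{P}\mathrm{ord}_P(j-c)=0,
\]
the sum running over inequivalent points other than $i,\rho,\infty$.

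Since $\mathrm{ord}_\infty(j-c)=-1$, the weighted number of zeros of $j-c$ in $\mathcal{F}$ is exactly $1$. To turn this into a genuine set-theoretic statement I would split into cases using $j(i)=1728$, $j(\rho)=0$ and the orders $2,3$ of the stabilizers of $i,\rho$ in $\mathrm{PSL}_2(\mathbb{Z})$: if $c=1728$ the unique zero is a double zero at $i$; if $c=0$ it is a triple zero at $\rho$; otherwise it is a single simple zero at a non-elliptic point. In every case $j$ attains the value $c$ at precisely one orbit, and since $\mathcal{F}$ carries exactly one representative of each orbit this gives simultaneously injectivity and surjectivity, hence the claimed bijection.

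The hard part will be the rigorous bookkeeping in the contour integral underlying the valence formula: verifying the cancellation of the vertical edges, the pairing of the circular arcs under $S$, the correct half- and third-turn contributions of the arcs excised around $i$ and $\rho$, and the control of the top edge as $T\to\infty$ via the $q$-expansion. The ramification of $j$ at the elliptic points, namely the vanishing of $j'$ at $i$ and the triple vanishing of $j$ at $\rho$, is exactly what makes the weighted count collapse to an honest one-to-one correspondence, and this is the point demanding the most care.
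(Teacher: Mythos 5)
The paper does not prove this lemma itself but simply cites Lang, \emph{Elliptic Functions}, Ch.~3, Thm.~4, whose proof is exactly the valence-formula argument you outline; your sketch (homothety invariance of $J$ for well-definedness, the $q$-expansion giving $\mathrm{ord}_\infty(j-c)=-1$, the weighted zero count over the fundamental domain, and the case analysis at $i$ and $\rho$ forced by the half- and third-integer weights) is correct and is essentially the same route. Nothing is missing beyond the contour bookkeeping you already flag as the part requiring care.
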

\begin{proof}
See \cite[Theorem 4 in Chapter 3]{Lang87}.
\end{proof}

For $\mathbf{v}=\begin{bmatrix}v_1&v_2\end{bmatrix}\in M_{1,\,2}(\mathbb{Q})
\setminus M_{1,\,2}(\mathbb{Z})$, the Fricke function $f_\mathbf{v}$ on $\mathbb{H}$
is given by
\begin{equation}\label{defFricke}
f_\mathbf{v}(\tau)=-2^73^5\frac{g^{}_2([\tau,\,1])g^{}_3([\tau,\,1])}
{\Delta([\tau,\,1])}\wp(v_1\tau+v_2;\,[\tau,\,1])\quad(\tau\in\mathbb{H}).
\end{equation}
Note that if $\mathbf{u},\,\mathbf{v}\in M_{1,\,2}(\mathbb{Q})
\setminus M_{1,\,2}(\mathbb{Z})$ such that
$\mathbf{u}\equiv\pm\mathbf{v}\Mod{M_{1,\,2}(\mathbb{Z})}$, then
two functions $f_\mathbf{u}$ and $f_\mathbf{v}$ are equal.
For a positive integer $N$, let
\begin{equation*}
\mathcal{F}_N=\left\{
\begin{array}{ll}
\mathbb{Q}(j) & \textrm{if}~N=1,\\
\mathbb{Q}\left(j,\,f_\mathbf{v}~|~\mathbf{v}\in
M_{1,\,2}(\mathbb{Q})\setminus M_{1,\,2}(\mathbb{Z})~\textrm{such that}~
N\mathbf{v}\in M_{1,\,2}(\mathbb{Z})\right) & \textrm{if}~N\geq2.
\end{array}\right.
\end{equation*}
Then $\mathcal{F}_N$ is a Galois extension of $\mathcal{F}_1$
whose Galois group is isomorphic to $\mathrm{GL}_2(\mathbb{Z}/N\mathbb{Z})/\{\pm I_2\}$
(\cite[Theorem 6.6]{Shimura}).
And, $\mathcal{F}_N$ coincides with the field
of meromorphic modular functions for the principal congruence subgroup
\begin{equation*}
\Gamma(N)=\left\{
\alpha\in\mathrm{SL}_2(\mathbb{Z})~|~
\alpha\equiv I_2\Mod{N M_2(\mathbb{Z})}\right\}
\end{equation*}
whose Fourier coefficients belong to the $N$th cyclotomic field
$\mathbb{Q}(\zeta^{}_N)$ with $\zeta^{}_N=e^{2\pi\mathrm{i}/N}$
(\cite[Proposition 6.9]{Shimura}).
\par
We further observe that the group $\mathrm{GL}_2(\mathbb{Z}/N\mathbb{Z})/\{\pm I_2\}$
can be decomposed as
\begin{equation*}
\mathrm{GL}_2(\mathbb{Z}/N\mathbb{Z})/\{\pm I_2\}=
\mathrm{SL}_2(\mathbb{Z}/N\mathbb{Z})/\{\pm I_2\}\cdot
G_N=G_N\cdot\mathrm{SL}_2(\mathbb{Z}/N\mathbb{Z})/\{\pm I_2\}
\end{equation*}
where $G_N$ is the subgroup of $\mathrm{GL}_2(\mathbb{Z}/N\mathbb{Z})/\{\pm I_2\}$ given by
\begin{equation*}
G_N=\left\{\left[\begin{bmatrix}
1&0\\0&d
\end{bmatrix}\right]=
\textrm{the class of}~\begin{bmatrix}
1&0\\0&d
\end{bmatrix}
~\textrm{in}~\mathrm{GL}_2(\mathbb{Z}/N\mathbb{Z})/\{\pm I_2\}~\bigg|~d\in(\mathbb{Z}/N\mathbb{Z})^\times\right\}.
\end{equation*}
The action of $\mathrm{GL}_2(\mathbb{Z}/N\mathbb{Z})/\{\pm I_2\}$
($\simeq\mathrm{Gal}(\mathcal{F}_N/\mathcal{F}_1)$) on $\mathcal{F}_N$
can be explained as follows $\colon$ first, if
$\sigma$ is an element of $\mathrm{SL}_2(\mathbb{Z}/N\mathbb{Z})/\{\pm I_2\}$
obtained from a matrix $\gamma$ in $\mathrm{SL}_2(\mathbb{Z})$, then
its action on $\mathcal{F}_N$ is
\begin{equation}\label{composition}
f^\sigma=f\circ\gamma\quad(f\in\mathcal{F}_N).
\end{equation}
Second, if $\left[\begin{bmatrix}1&0\\0&d\end{bmatrix}\right]$ is an element of $G_N$ with $d\in(\mathbb{Z}/N\mathbb{Z})^\times$
and $f\in\mathcal{F}_N$ has Fourier expansion
\begin{equation*}
f(\tau)=\sum_{n\gg-\infty}c(n)q^{n/N}\quad(c(n)\in\mathbb{Q}(\zeta^{}_N),~
q=e^{2\pi\mathrm{i}\tau})
\end{equation*}
(here, $n\gg-\infty$ means that
there are only finitely many negative integers $n$
in the above summation),
then
\begin{equation*}
f^{\left[\left[\begin{smallmatrix}1&0\\0&d\end{smallmatrix}\right]\right]}(\tau)=
\sum_{n\gg-\infty}c(n)^{\sigma_d}q^{n/N}
\end{equation*}
where $\sigma_d$ is the automorphism of $\mathbb{Q}(\zeta^{}_N)$ given by
$\zeta^{}_N\mapsto\zeta_N^d$.
\par
Meanwhile, for $\mathbf{v}=\begin{bmatrix}v_1 & v_2\end{bmatrix}\in M_{1,\,2}(\mathbb{Q})\setminus
M_{1,\,2}(\mathbb{Z})$ the Siegel function $g_\mathbf{v}$ on $\mathbb{H}$
is given by the infinite product expansion
\begin{equation}\label{infiniteproduct}
g_\mathbf{v}(\tau)=-q^{(1/2)\mathbf{B}_2(v_1)}e^{\pi\mathrm{i}v_2(v_1-1)}
(1-q_z)\prod_{n=1}^\infty(1-q^nq_z)(1-q^nq_z^{-1})\quad(\tau\in\mathbb{H})
\end{equation}
where $q_z=e^{2\pi\mathrm{i}z}$ with
$z=v_1\tau+v_2$ and $\mathbf{B}_2(x)=x^2-x+1/6$
is the second Bernoulli polynomial.
Note that $g_\mathbf{v}$ has neither a zero nor a pole on $\mathbb{H}$.
One can refer to \cite{Siegel} or \cite{K-L}
for further details on Siegel functions.

\begin{proposition}\label{modularity}
For $N\geq2$, let $\{m(\mathbf{v})\}_{\mathbf{v}\in(1/N)M_{1,\,2}(\mathbb{Z})\setminus
M_{1,\,2}(\mathbb{Z})}$ be a family of integers such that $m(\mathbf{v})=0$
except for finitely many $\mathbf{v}$. Then the finite product
\begin{equation*}
\prod_{\mathbf{v}}g_\mathbf{v}^{m(\mathbf{v})}
\end{equation*}
is a meromorphic modular function for $\Gamma(N)$ if
\begin{eqnarray*}
&&\sum_\mathbf{v}m(\mathbf{v})(Nv_1)^2\equiv
\sum_\mathbf{v}m(\mathbf{v})(Nv_2)^2\equiv0\Mod{N\gcd(2,\,N)},
\\
&&\sum_\mathbf{v}m(\mathbf{v})(Nv_1)(Nv_2)\equiv0\Mod{N},\\
&&\sum_\mathbf{v}m(\mathbf{v})\gcd(12,\,N)\equiv0\Mod{12}.
\end{eqnarray*}
\end{proposition}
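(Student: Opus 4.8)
The plan is to strip the statement down to a single nontrivial requirement --- invariance under $\Gamma(N)$ --- and to reduce that to an explicit root-of-unity computation. First I would note that, by the product expansion (\ref{infiniteproduct}), each $g_\mathbf{v}$ is holomorphic and nowhere vanishing on $\mathbb{H}$, so the finite product $f=\prod_\mathbf{v}g_\mathbf{v}^{m(\mathbf{v})}$ is holomorphic on $\mathbb{H}$; and once $\Gamma(N)$-invariance is established, the leading term of the $q$-expansion (a single power of $q$ coming from the factors $q^{(1/2)\mathbf{B}_2(v_1)}$) shows that $f$ is meromorphic at every cusp. Hence the entire content of the proposition is that the three congruences force $f\circ\gamma=f$ for all $\gamma\in\Gamma(N)$.

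For $\gamma\in\Gamma(N)$ and $\mathbf{v}\in(1/N)M_{1,2}(\mathbb{Z})$ one has $\mathbf{v}\gamma\equiv\mathbf{v}\Mod{M_{1,2}(\mathbb{Z})}$, since $\gamma-I_2\in NM_2(\mathbb{Z})$. Writing each Siegel function as $g_\mathbf{v}=\mathfrak{k}_\mathbf{v}\eta^2$ with $\mathfrak{k}_\mathbf{v}$ the Klein form and $\eta^2=\Delta^{1/12}$, the transformation law of \cite{K-L} gives $g_\mathbf{v}\circ\gamma=\epsilon(\gamma)^2\,\varepsilon(\mathbf{v},\mathbf{v}(\gamma-I_2))\,g_\mathbf{v}$, where $\epsilon(\gamma)$ is the Dedekind $\eta$-multiplier (independent of $\mathbf{v}$) and $\varepsilon(\mathbf{v},\mathbf{n})=(-1)^{n_1n_2+n_1+n_2}e^{-\pi\mathrm{i}(n_1v_2-n_2v_1)}$ is the root of unity attached to an integral translation $\mathbf{n}=\begin{bmatrix}n_1&n_2\end{bmatrix}$. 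Consequently $f\circ\gamma=\chi(\gamma)f$ with $\chi(\gamma)=\epsilon(\gamma)^{2\sum_\mathbf{v}m(\mathbf{v})}\prod_\mathbf{v}\varepsilon(\mathbf{v},\mathbf{v}(\gamma-I_2))^{m(\mathbf{v})}$, a root of unity. Because $f$ is nowhere zero on $\mathbb{H}$, the identity $f\circ(\gamma\gamma')=(f\circ\gamma)\circ\gamma'$ shows that $\chi:\Gamma(N)\to\mathbb{C}^\times$ is a homomorphism, so it is enough to verify $\chi\equiv1$ on a set of generators of $\Gamma(N)$.

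I would make this concrete on the shearing generator $\begin{bmatrix}1&N\\0&1\end{bmatrix}\in\Gamma(N)$, where a direct substitution $\tau\mapsto\tau+N$ in (\ref{infiniteproduct}) yields $g_\mathbf{v}(\tau+N)=e^{\pi\mathrm{i}N\mathbf{B}_2(v_1)}g_\mathbf{v}(\tau)$, so that $\chi\big(\begin{bmatrix}1&N\\0&1\end{bmatrix}\big)=\exp\big(\pi\mathrm{i}N\sum_\mathbf{v}m(\mathbf{v})\mathbf{B}_2(v_1)\big)$. Expanding $\mathbf{B}_2(v_1)=v_1^2-v_1+\tfrac16$ displays the three kinds of contributions at once: the quadratic part $\tfrac1N\sum_\mathbf{v}m(\mathbf{v})(Nv_1)^2$, a linear part $\sum_\mathbf{v}m(\mathbf{v})(Nv_1)$, and the constant part $\tfrac{N}{6}\sum_\mathbf{v}m(\mathbf{v})$. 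The analogous generator $\begin{bmatrix}1&0\\N&1\end{bmatrix}$ produces the $(Nv_2)^2$-contribution, while a generator with both off-diagonal entries nonzero contributes the cross term through the factor $e^{-\pi\mathrm{i}(n_1v_2-n_2v_1)}$ of $\varepsilon$, since $n_1v_2-n_2v_1$ then acquires a $v_1v_2$-term. Imposing $\chi\equiv1$ on all generators and separating these contributions --- absorbing the linear terms into the quadratic ones by $x^2\equiv x\Mod{2}$ and reducing the $\tfrac16$-terms modulo $12$ --- shows that the three stated congruences guarantee $\Gamma(N)$-invariance: the quadratic parts give $\sum_\mathbf{v}m(\mathbf{v})(Nv_1)^2\equiv\sum_\mathbf{v}m(\mathbf{v})(Nv_2)^2\equiv0\Mod{N\gcd(2,N)}$, the cross term gives $\sum_\mathbf{v}m(\mathbf{v})(Nv_1)(Nv_2)\equiv0\Mod{N}$, and the constant (i.e.\ the $\eta$-multiplier) part gives $\sum_\mathbf{v}m(\mathbf{v})\gcd(12,N)\equiv0\Mod{12}$.

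The conceptual reduction above is routine; the real work --- and the main obstacle --- is the precise root-of-unity bookkeeping in the last step. Two features demand care. First, the modulus $N\gcd(2,N)$ in the quadratic conditions is parity-sensitive: the extra factor of $2$ when $N$ is even comes from the $\tfrac12$ in $\tfrac12\mathbf{B}_2$ together with the sign $(-1)^{n_1n_2+n_1+n_2}$, and obtaining the sharp modulus requires tracking how the linear terms fold into the quadratic ones differently for $N$ odd and $N$ even. Second, the modulus $12$ in the third condition originates entirely from the constant $\tfrac16$ of $\mathbf{B}_2$, equivalently from the square of the Dedekind $\eta$-multiplier carried by the $\eta^2=\Delta^{1/12}$ factor; this is the reason a pure condition on $\sum_\mathbf{v}m(\mathbf{v})$ appears at all. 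One must also check that each congruence is invariant under replacing a representative $\mathbf{v}$ by $\mathbf{v}+\mathbf{n}$ (with $\mathbf{n}\in M_{1,2}(\mathbb{Z})$), which is what forces the homogeneous quadratic and bilinear shape of the conditions and guarantees that verification on generators indeed yields $\Gamma(N)$-invariance.
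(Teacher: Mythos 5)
The paper offers no proof of this proposition: its ``proof'' is the citation \cite[Theorems 5.2 and 5.3]{K-L}, and the argument there is exactly the one you outline --- factor $g_\mathbf{v}=\mathfrak{k}_\mathbf{v}\eta^2$, use the exact $\mathrm{SL}_2(\mathbb{Z})$-equivariance of the Klein forms together with the translation cocycle $\varepsilon(\mathbf{v},\mathbf{n})=(-1)^{n_1n_2+n_1+n_2}e^{-\pi\mathrm{i}(n_1v_2-n_2v_1)}$ and the $\eta$-multiplier, and show that the resulting character $\chi$ of $\Gamma(N)$ is trivial under the three congruences. Your reduction of the problem to $\chi\equiv1$, and your identification of where each modulus originates ($N\gcd(2,N)$ from the quadratic terms, $N$ from the cross term, $12$ from $\eta^2=\Delta^{1/12}$), are faithful to the cited source.

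As a proof, however, the proposal has a genuine gap, and it is the one you yourself flag: the triviality of $\chi$ is never verified. Everything preceding it is soft (holomorphy on $\mathbb{H}$, the cocycle formalism, meromorphy at the cusps once invariance is known); the entire content of the proposition is that the three congruences force $\chi(\gamma)=1$, and you assert this after a qualitative description rather than a computation. Moreover, the reduction to generators is defective as stated: for $N\geq3$ the matrices $\left[\begin{smallmatrix}1&N\\0&1\end{smallmatrix}\right]$ and $\left[\begin{smallmatrix}1&0\\N&1\end{smallmatrix}\right]$ generate a subgroup of \emph{infinite} index in $\Gamma(N)$, and you exhibit no generating set containing the promised ``generator with both off-diagonal entries nonzero,'' so there is nothing concrete on which to test $\chi$. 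The workable version avoids generators entirely: for an arbitrary $\gamma=I_2+N\beta\in\Gamma(N)$ one computes $\chi(\gamma)=\epsilon(\gamma)^{2\sum_\mathbf{v}m(\mathbf{v})}\prod_\mathbf{v}\varepsilon(\mathbf{v},(N\mathbf{v})\beta)^{m(\mathbf{v})}$ in closed form and checks directly --- with the parity analysis split according to $N$ odd or even, and with the linear terms $\sum_\mathbf{v}m(\mathbf{v})(Nv_1)$ and $\sum_\mathbf{v}m(\mathbf{v})(Nv_2)$ folded into the quadratic ones via $x^2\equiv x\Mod{2}$ --- that the exponent is an even integer. (As a sanity check, your shear computation does close up for $N$ odd: condition one gives $\tfrac1N\sum_\mathbf{v}m(\mathbf{v})(Nv_1)^2\in\mathbb{Z}$ congruent mod $2$ to $\sum_\mathbf{v}m(\mathbf{v})(Nv_1)$, and condition three gives $N\sum_\mathbf{v}m(\mathbf{v})\equiv0\Mod{12}$; but this covers only one family of elements and one parity.) Until that computation is carried out for general $\gamma$, the stated congruences are not actually connected to the conclusion.
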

\begin{proof}
See \cite[Theorems 5.2 and 5.3]{K-L}.
\end{proof}

\begin{remark}\label{Frickefamily}
We say that $\mathbf{v}\in M_{1,\,2}(\mathbb{Q})$ is primitive modulo $N$ ($\geq2$) if
$N$ is the smallest positive integer so that $N\mathbf{v}\in M_{1,\,2}(\mathbb{Z})$.
Let $V_N$ be the set of all such $\mathbf{v}$'s.
As mentioned in \cite[p. 33]{K-L}
a family $\{h_\mathbf{v}\}_{\mathbf{v}\in V_N}$ of functions in $\mathcal{F}_N$
is called a Fricke family of level $N$ if
\begin{enumerate}
\item[(i)] $h_\mathbf{v}$ is holomorphic on $\mathbb{H}$ for every $\mathbf{v}\in V_N$,
\item[(ii)] $h_\mathbf{u}=h_\mathbf{v}$ whenever $\mathbf{u},\,\mathbf{v}\in V_N$
satisfy $\mathbf{u}\equiv\pm\mathbf{v}\Mod{M_{1,\,2}(\mathbb{Z})}$,
\item[(iii)] $h_\mathbf{v}^\gamma=h_{\mathbf{v}\gamma}$ for all
$\mathbf{v}\in V_N$ and $\gamma\in\mathrm{GL}_2(\mathbb{Z}/N\mathbb{Z})/\{\pm I_2\}$
($\simeq\mathrm{Gal}(\mathcal{F}_N/\mathcal{F}_1)$).
\end{enumerate}
As is well known, $\{f_\mathbf{v}\}_{\mathbf{v}\in V_N}$
and $\{g_\mathbf{v}^{12N}\}_{\mathbf{v}\in V_N}$ are
Fricke families of level $N$
(\cite[Theorem 6.6]{Shimura} and \cite[Proposition 1.3 in Chapter 2]{K-L}).
In \cite{J-K-S17} and \cite{J-K-S19} Jung, Koo and Shin
examined several properties and applications
to class field theory of these typical examples of a Fricke family.
\end{remark}

\begin{lemma}\label{pg}
If $\mathbf{v}=\begin{bmatrix}v_1&v_2\end{bmatrix}\in M_{1,\,2}(\mathbb{Q})$
such that $2\mathbf{v}\not\in M_{1,\,2}(\mathbb{Z})$, then
\begin{equation*}
\wp'(v_1\tau+v_2;\,[\tau,\,1])=
-\eta(\tau)^6\frac{g_{2\mathbf{v}}(\tau)}
{g_\mathbf{v}(\tau)^4}\quad(\tau\in\mathbb{H})
\end{equation*}
where $\eta$ is the Dedekind $\eta$-function on $\mathbb{H}$ defined by
\begin{equation*}
\eta(\tau)=\sqrt{2\pi}\zeta^{}_8q^{1/24}\prod_{n=1}
^\infty(1-q^n)\quad(\tau\in\mathbb{H}).
\end{equation*}
\end{lemma}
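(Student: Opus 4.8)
The plan is to route the identity through the Weierstrass $\sigma$-function, for which the corresponding statement is classical, and then convert $\sigma$ into the Siegel function $g_\mathbf{v}$ by way of the Klein form. Throughout I set $\Lambda=[\tau,\,1]$ and $z=v_1\tau+v_2$, and I first observe that the hypothesis $2\mathbf{v}\notin M_{1,\,2}(\mathbb{Z})$ forces $z$ not to be a half-period of $\Lambda$; consequently $\wp'(z;\,\Lambda)\neq0$, and both $g_\mathbf{v}$ and $g_{2\mathbf{v}}$ are defined and nonvanishing on $\mathbb{H}$, so every quantity below is meaningful.

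First I would record the classical $\sigma$-identity
\begin{equation*}
\wp'(z;\,\Lambda)=-\frac{\sigma(2z;\,\Lambda)}{\sigma(z;\,\Lambda)^4}.
\end{equation*}
To get it, start from the addition formula $\wp(z;\,\Lambda)-\wp(w;\,\Lambda)=-\sigma(z-w;\,\Lambda)\sigma(z+w;\,\Lambda)/\bigl(\sigma(z;\,\Lambda)^2\sigma(w;\,\Lambda)^2\bigr)$, which one verifies by comparing the zeros, poles and leading terms of both sides as functions of $z$ (using that $\sigma$ is odd with $\sigma(u;\,\Lambda)\sim u$ near $u=0$). Dividing by $z-w$ and letting $w\to z$, the left side tends to $\wp'(z;\,\Lambda)$ while on the right the factor $\sigma(z-w;\,\Lambda)/(z-w)\to1$ and the remaining factor tends to $\sigma(2z;\,\Lambda)/\sigma(z;\,\Lambda)^4$, giving the identity.

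Next I would bring in the Klein form $\mathfrak{k}(z;\,\Lambda)=e^{-\frac12\eta(z;\,\Lambda)z}\sigma(z;\,\Lambda)$, where $\eta(z;\,\Lambda)$ is the Weierstrass quasi-period attached to $z$ (the $\mathbb{R}$-linear extension of $\omega\mapsto\eta(\omega)$ on $\Lambda$), \emph{not} the Dedekind $\eta$-function of the statement. Because $\eta(\cdot;\,\Lambda)$ is linear, the exponent for $\sigma(2z)$ is $\tfrac12\eta(2z;\,\Lambda)(2z)=2\eta(z;\,\Lambda)z$, which is exactly four times the exponent $\tfrac12\eta(z;\,\Lambda)z$ contributed by $\sigma(z)^4$. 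Hence the transcendental factor cancels upon substituting $\sigma=e^{\frac12\eta(z;\,\Lambda)z}\mathfrak{k}$ into the $\sigma$-identity, and with $\mathfrak{k}_\mathbf{v}(\tau)=\mathfrak{k}(v_1\tau+v_2;\,[\tau,\,1])$ I obtain
\begin{equation*}
\wp'(z;\,\Lambda)=-\frac{\mathfrak{k}(2z;\,\Lambda)}{\mathfrak{k}(z;\,\Lambda)^4}=-\frac{\mathfrak{k}_{2\mathbf{v}}(\tau)}{\mathfrak{k}_\mathbf{v}(\tau)^4}.
\end{equation*}
Finally I would invoke the relation $g_\mathbf{v}=\eta(\tau)^2\,\mathfrak{k}_\mathbf{v}$ with the present normalization of $\eta$ (the constant $\sqrt{2\pi}\,\zeta_8$ in the definition of $\eta$ is chosen precisely so that $\eta(\tau)^{24}=\Delta([\tau,\,1])$, i.e.\ $\eta(\tau)^2=\Delta([\tau,\,1])^{1/12}$, the degree-$(-1)$ factor turning the weight-$(-1)$ Klein form into the weight-$0$ Siegel function). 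Substituting $\mathfrak{k}_\mathbf{v}=g_\mathbf{v}/\eta^2$ and $\mathfrak{k}_{2\mathbf{v}}=g_{2\mathbf{v}}/\eta^2$ collects the $\eta$-powers as $\eta^{8-2}=\eta^6$ and yields the claimed formula.

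The conceptual content—the $\sigma$-identity together with the cancellation of $e^{\frac12\eta(z;\,\Lambda)z}$—is immediate once the Klein form is available. The main point requiring care is the bookkeeping of normalizing constants: pinning down the exact relation $g_\mathbf{v}=\eta^2\mathfrak{k}_\mathbf{v}$ (including its sign and the role of the factor $\sqrt{2\pi}\,\zeta_8$) under the paper's conventions, which I would settle by matching leading $q$-expansion coefficients and appealing to the Klein form identities in \cite[Chapter 2]{K-L}.
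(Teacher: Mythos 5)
Your argument is correct and is, in substance, the standard derivation that the paper delegates entirely to the citation \cite[p.~852]{K-S}: the classical identity $\wp'(z)=-\sigma(2z)/\sigma(z)^4$, the cancellation of the quasi-period exponentials $e^{\frac12\eta(z;\Lambda)z}$ when passing to Klein forms, and the normalization $g_\mathbf{v}=\eta(\tau)^2\mathfrak{k}_\mathbf{v}$. The last bookkeeping step you defer is not actually in doubt, since with the paper's $\eta$ (so that $\eta^2=\Delta^{1/12}=2\pi i\,q^{1/12}\prod(1-q^n)^2$) the relation $g_\mathbf{v}=\mathfrak{k}_\mathbf{v}\Delta^{1/12}$ is precisely Kubert--Lang's definition of the Siegel function, from which the quoted $q$-product follows; so nothing further is needed.
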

\begin{proof}
See \cite[p. 852]{K-S}.
\end{proof}

\section {The Shimura reciprocity law}\label{Sect4}

Let $K$ be an imaginary quadratic field of discriminant $d_K$ ($<0$).
In this section, we shall introduce a description of $\mathrm{Gal}(K_{(p^\infty)}/H_K)$ for a prime $p$
due to Stevenhagen (\cite{Stevenhagen}).
\par
By the classical theory of complex multiplication
established by Kronecker, Weber, Hasse and Deuring, we have
\begin{eqnarray}
&&H_K=K(j(\mathcal{O}_K))=K(j(\tau^{}_K)),\label{generationHilbert}\\
&&K_{(N)}=K(f(\tau^{}_K)~|~f\in\mathcal{F}_N~\textrm{is finite at}~\tau^{}_K)
\label{generationray}
\end{eqnarray}
for a positive integer $N$
(\cite[Theorem 1 and Corollary to Theorem 2 in Chapter 10]{Lang87}). Shimura revisited
this result by developing the theory of canonical models for modular curves and the reciprocity law
(\cite[Theorem 6.31 and Proposition 6.33]{Shimura}).
Let $\min(\tau^{}_K,\,\mathbb{Q})=x^2+b_Kx+c_K$ ($\in\mathbb{Z}[x]$), and let
\begin{equation}\label{Cartan}
W_{K,\,N}=\left\{
\gamma=\begin{bmatrix}t-b_Ks & -c_Ks\\s&t\end{bmatrix}~|~s,\,t\in\mathbb{Z}/N\mathbb{Z}~
\textrm{such that}~\gamma\in\mathrm{GL}_2(\mathbb{Z}/N\mathbb{Z})\right\}
\end{equation}
which is the Cartan subgroup of $\mathrm{GL}_2(\mathbb{Z}/N\mathbb{Z})$
associated with the $(\mathbb{Z}/N\mathbb{Z})$-algebra $\mathcal{O}_K/N\mathcal{O}_K$
with ordered basis $\{\tau^{}_K+N\mathcal{O}_K,\,1+N\mathcal{O}_K\}$.
And, let
\begin{equation*}
U_K=\left\{\begin{array}{ll}
\{\pm I_2\} & \textrm{if}~K\neq\mathbb{Q}(\sqrt{-1}),\,\mathbb{Q}(\sqrt{-3}),\\
\left\{\pm I_2,\,\pm\left[\begin{smallmatrix}
0 & -1\\1 &\phantom{-}0
\end{smallmatrix}\right]\right\}& \textrm{if}~K=\mathbb{Q}(\sqrt{-1}),\\
\left\{\pm I_2,\,\pm\left[\begin{smallmatrix}
-1 & -1\\\phantom{-}1 & \phantom{-}0
\end{smallmatrix}\right],\,
\pm\left[\begin{smallmatrix}
0 & -1\\1&\phantom{-}1\end{smallmatrix}\right]\right\}& \textrm{if}~K=\mathbb{Q}(\sqrt{-3})
\end{array}\right.
\end{equation*}
which is a subgroup of $\mathrm{SL}_2(\mathbb{Z})$.
If
\begin{equation*}
r^{}_N~:~\mathrm{SL}_2(\mathbb{Z})~\rightarrow~\mathrm{SL}_2(\mathbb{Z}/N\mathbb{Z})
\end{equation*}
denotes the reduction modulo $N$, then
$r^{}_N(U_K)$ is a subgroup of $W_{K,\,N}$.
The following proposition done by Stevenhagen gives a simple description of
a part of the Shimura reciprocity law.

\begin{proposition}\label{reciprocity}
We have a surjection
\begin{equation}\label{W}
\begin{array}{ccl}
W_{K,\,N}&\rightarrow&\mathrm{Gal}(K_{(N)}/H_K)\\
\gamma&\mapsto&\left(
f(\tau^{}_K)\mapsto f^{[\gamma]}(\tau^{}_K)~|~f\in\mathcal{F}_N~
\textrm{is finite at}~\tau^{}_K\right)
\end{array}
\end{equation}
whose kernel is $r^{}_N(U_K)$. Here,
$[\gamma]$ means the image of $\gamma$ in $\mathrm{GL}_2(\mathbb{Z}/N\mathbb{Z})/\{\pm I_2\}$
\textup{(}$\simeq\mathrm{Gal}(\mathcal{F}_N/\mathcal{F}_1)$\textup{)}.
\end{proposition}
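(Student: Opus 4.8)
The plan is to deduce this finite-level assertion from the full idelic Shimura reciprocity law (\cite[Theorem 6.31 and Proposition 6.33]{Shimura}), combined with the standard class field theory description of $\mathrm{Gal}(K_{(N)}/H_K)$ as a quotient of $(\mathcal{O}_K/N\mathcal{O}_K)^\times$. Shimura's law attaches to each finite idele $x\in\mathbb{A}_{K,f}^\times$ a matrix $g_{\tau_K}(x)$ obtained from the action of $x$ on $K=\mathbb{Q}\tau_K+\mathbb{Q}$ in the ordered basis $\{\tau_K,\,1\}$, and asserts that $f(\tau_K)^{(x,\,K)}=f^{[g_{\tau_K}(x)^{-1}]}(\tau_K)$ for every $f\in\mathcal{F}$ finite at $\tau_K$, where $(x,\,K)$ is the Artin symbol. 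I would restrict to the unit ideles $x\in\widehat{\mathcal{O}}_K^\times$, whose action on $\mathcal{F}_N$ factors through $\mathrm{GL}_2(\mathbb{Z}/N\mathbb{Z})/\{\pm I_2\}$. If $x$ reduces to $\alpha=t+s\tau_K\in(\mathcal{O}_K/N\mathcal{O}_K)^\times$ with $s,\,t\in\mathbb{Z}/N\mathbb{Z}$, then, using $\tau_K^2=-b_K\tau_K-c_K$, the residue $g_{\tau_K}(x)\bmod N$ is exactly the regular representation of $\alpha$, which up to the transpose convention built into (\ref{Cartan}) is the matrix $\gamma=\left[\begin{smallmatrix}t-b_Ks&-c_Ks\\s&t\end{smallmatrix}\right]$. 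Since the regular representation is an algebra homomorphism, this identifies $W_{K,\,N}$ with $(\mathcal{O}_K/N\mathcal{O}_K)^\times$ as groups.

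By the displayed reciprocity formula, for any unit idele $x$ lifting $\alpha$ the prescribed assignment $f(\tau_K)\mapsto f^{[\gamma]}(\tau_K)$ coincides with the restriction to $K_{(N)}$ of the Artin symbol $(x,\,K)\in\mathrm{Gal}(K^{\mathrm{ab}}/K)$. This shows at once that the assignment is a genuine $K$-automorphism — so it is well defined independently of the function $f$ representing a given value, by (\ref{generationray}) — and that $\gamma\mapsto(x,\,K)|_{K_{(N)}}$ is a group homomorphism. Because $[\gamma]$ fixes $\mathcal{F}_1=\mathbb{Q}(j)$ pointwise, this automorphism fixes $j(\tau_K)$ and hence fixes $H_K=K(j(\tau_K))$ by (\ref{generationHilbert}); therefore it lies in $\mathrm{Gal}(K_{(N)}/H_K)$, so the map (\ref{W}) has the asserted target.

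For surjectivity and the kernel, I would invoke the exact sequence of the ray class group, $\mathcal{O}_K^\times\to(\mathcal{O}_K/N\mathcal{O}_K)^\times\to\mathrm{Cl}(N\mathcal{O}_K)\to\mathrm{Cl}(\mathcal{O}_K)\to1$ (\cite[\S7]{Cox}), whose middle arrow has image $\mathrm{Gal}(K_{(N)}/H_K)$ and kernel the image of the global units $\mathcal{O}_K^\times$. Transporting this through the isomorphism of the first step shows that (\ref{W}) is surjective with kernel equal to the image of $\mathcal{O}_K^\times$ in $W_{K,\,N}$. A direct computation of the regular representation of each unit — for instance, for $K=\mathbb{Q}(\sqrt{-1})$ the unit $\sqrt{-1}=\tau_K$ gives $\left[\begin{smallmatrix}0&-1\\1&\phantom{-}0\end{smallmatrix}\right]$, and similarly for the order-$6$ units of $\mathbb{Q}(\sqrt{-3})$ — identifies that image precisely with $r^{}_N(U_K)$, which yields the claim.

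The main obstacle I anticipate is bookkeeping rather than any deep input: reconciling the idelic action $f\mapsto f^{[g_{\tau_K}(x)^{-1}]}$ with the level-$N$ action of $[\gamma]\in\mathrm{GL}_2(\mathbb{Z}/N\mathbb{Z})/\{\pm I_2\}$, so that the inverse and transpose conventions agree with (\ref{Cartan}) and the map comes out a homomorphism in the stated direction. A second delicate point is the kernel computation: the extra units of $\mathbb{Q}(\sqrt{-1})$ and $\mathbb{Q}(\sqrt{-3})$ give matrices $[\gamma]$ that act \emph{nontrivially} on $\mathcal{F}_N$ yet fix every value $f(\tau_K)$, precisely because they are images of principal ideles and hence lie in the kernel of the Artin map; one must verify that these assemble into exactly $r^{}_N(U_K)$ and not a strictly larger or smaller subgroup.
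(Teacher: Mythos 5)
The paper offers no proof of its own here beyond the citation to Stevenhagen's \S 3, and your argument---restricting the idelic Shimura reciprocity law to $\widehat{\mathcal{O}}_K^\times$, identifying $(\mathcal{O}_K/N\mathcal{O}_K)^\times$ with $W_{K,\,N}$ via the regular representation on the basis $\{\tau^{}_K,\,1\}$, and reading off surjectivity and the kernel from the exact sequence $\mathcal{O}_K^\times\to(\mathcal{O}_K/N\mathcal{O}_K)^\times\to\mathrm{Cl}(N\mathcal{O}_K)\to\mathrm{Cl}(\mathcal{O}_K)\to1$---is precisely the argument of that cited source, with the unit computations for $\mathbb{Q}(\sqrt{-1})$ and $\mathbb{Q}(\sqrt{-3})$ correctly recovering $r^{}_N(U_K)$. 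Your proposal is correct and follows essentially the same route as the proof the paper relies on.
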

\begin{proof}
See \cite[$\S$3]{Stevenhagen}.
\end{proof}

We let
\begin{equation*}
\psi^{}_N~:~W_{K,\,N}~\rightarrow~\mathrm{Gal}(K_{(N)}/H_K)
\end{equation*}
be the surjection introduced in (\ref{W}).
For each prime $p$, let
\begin{equation*}
W_{K,\,p^\infty}=
\left\{
\gamma=\begin{bmatrix}
t-b_Ks & -c_Ks\\s&t
\end{bmatrix}~|~s,\,t\in\mathbb{Z}_p~
\textrm{such that}~\gamma\in\mathrm{GL}_2(\mathbb{Z}_p)\right\}
\end{equation*}
which is a subgroup of $\mathrm{GL}_2(\mathbb{Z}_p)$.
The following corollary was shown in
\cite[(3.2) and (4.3)]{Stevenhagen}, however,
we shall briefly give its proof in order to compare
with Theorem \ref{main1}.

\begin{corollary}\label{Wp}
Let $p$ be a prime,
and let $K_{(p^\infty)}$
be the maximal abelian extension of $K$
unramified outside prime ideals lying above $p$. Then $\mathrm{Gal}(K_{(p^\infty)}/H_K)$ is isomorphic to
$W_{K,\,p^\infty}/U_K$.
\end{corollary}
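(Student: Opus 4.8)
The plan is to realize $\mathrm{Gal}(K_{(p^\infty)}/H_K)$ as the inverse limit of the finite groups $\mathrm{Gal}(K_{(p^n)}/H_K)$ and to pass the surjections $\psi^{}_{p^n}$ of Proposition \ref{reciprocity} to the limit. First I would record that $K_{(p^\infty)}=\bigcup_{n\geq1}K_{(p^n)}$. Indeed, every finite abelian extension of $K$ unramified outside the primes above $p$ has conductor supported on those primes, hence dividing $p^n\mathcal{O}_K$ for $n$ large, and is therefore contained in some $K_{(p^n)}$; conversely each $K_{(p^n)}$ is unramified outside $p$. Consequently $\mathrm{Gal}(K_{(p^\infty)}/H_K)=\varprojlim_n\mathrm{Gal}(K_{(p^n)}/H_K)$, the transition maps being restriction.

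Next I would apply Proposition \ref{reciprocity} at each level $N=p^n$ to obtain a short exact sequence
\begin{equation*}
1\longrightarrow r^{}_{p^n}(U_K)\longrightarrow W_{K,\,p^n}\stackrel{\psi^{}_{p^n}}{\longrightarrow}\mathrm{Gal}(K_{(p^n)}/H_K)\longrightarrow1,
\end{equation*}
and then check that these sequences form a compatible inverse system. On the Cartan side the transition map $W_{K,\,p^{n+1}}\to W_{K,\,p^n}$ is reduction modulo $p^n$; under the identification $W_{K,\,N}\cong(\mathcal{O}_K/N\mathcal{O}_K)^\times$ this is surjective and carries $r^{}_{p^{n+1}}(U_K)$ onto $r^{}_{p^n}(U_K)$. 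The crucial compatibility $\psi^{}_{p^{n+1}}(\gamma)|_{K_{(p^n)}}=\psi^{}_{p^n}(\bar\gamma)$, where $\bar\gamma$ denotes the reduction of $\gamma$, follows because the action of $\mathrm{GL}_2(\mathbb{Z}/p^{n+1}\mathbb{Z})/\{\pm I_2\}$ on the subfield $\mathcal{F}_{p^n}\subseteq\mathcal{F}_{p^{n+1}}$ factors through $\mathrm{GL}_2(\mathbb{Z}/p^n\mathbb{Z})/\{\pm I_2\}$ via reduction, so that $f^{[\gamma]}=f^{[\bar\gamma]}$ for every $f\in\mathcal{F}_{p^n}$ and hence $f(\tau^{}_K)$ is moved identically by both maps.

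Finally I would pass to the inverse limit. Since every group in sight is finite, the system $\{r^{}_{p^n}(U_K)\}_n$ satisfies the Mittag-Leffler condition, the derived functor $\varprojlim^1$ vanishes, and the limit of the short exact sequences is again short exact. The entrywise reductions give $\varprojlim_n W_{K,\,p^n}=W_{K,\,p^\infty}$ (as $\varprojlim_n\mathbb{Z}/p^n\mathbb{Z}=\mathbb{Z}_p$ and invertibility is a limit condition), while $\varprojlim_n r^{}_{p^n}(U_K)=U_K$: the finite group $U_K\subseteq\mathrm{SL}_2(\mathbb{Z})$ embeds into $\mathrm{SL}_2(\mathbb{Z}_p)$ since $U_K\cap(I_2+p^nM_2(\mathbb{Z}))=\{I_2\}$ once $p^n$ exceeds the entries of $U_K$, so the reductions are eventually injective with stable image. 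Combining these identifications with $\varprojlim_n\mathrm{Gal}(K_{(p^n)}/H_K)=\mathrm{Gal}(K_{(p^\infty)}/H_K)$ yields $\mathrm{Gal}(K_{(p^\infty)}/H_K)\cong W_{K,\,p^\infty}/U_K$, as desired.

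I expect the main obstacle to be the bookkeeping of the inverse limit, specifically verifying the level compatibility of the $\psi^{}_{p^n}$ and justifying $\varprojlim_n r^{}_{p^n}(U_K)=U_K$ in the delicate case $p=2$, where $r^{}_2(U_K)$ may be strictly smaller than $U_K$ even though the image stabilizes to $U_K$ for $n\geq2$.
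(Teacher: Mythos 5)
Your proposal is correct and follows essentially the same route as the paper: both form the inverse system of short exact sequences $1\to r^{}_{p^n}(U_K)\to W_{K,\,p^n}\to\mathrm{Gal}(K_{(p^n)}/H_K)\to1$ from Proposition \ref{reciprocity}, invoke the Mittag-Leffler condition to keep exactness in the limit, and identify the three limits as $U_K$, $W_{K,\,p^\infty}$ and $\mathrm{Gal}(K_{(p^\infty)}/H_K)$. Your extra care with the level compatibility of the $\psi^{}_{p^n}$ and with $\varprojlim_n r^{}_{p^n}(U_K)=U_K$ at $p=2$ only makes explicit what the paper leaves implicit.
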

\begin{proof}
By the fact that $\mathrm{Gal}(\mathcal{F}_{p^n}/\mathcal{F}_1)
\simeq\mathrm{GL}_2(\mathbb{Z}/p^n\mathbb{Z})/\{\pm I_2\}$ ($n\geq1$) and
Proposition \ref{reciprocity},
we derive
the inverse system of short exact sequences as in Figure \ref{exacts}
in which the first and second vertical maps are reductions and
the third vertical maps are restrictions.

\begin{figure}[h]
\begin{equation*}
\xymatrixcolsep{2pc}\xymatrix{
 & \rvdots \ar[d] & \rvdots \ar[d] & \rvdots \ar[d]&  \\
0 \ar[r]  & r^{}_{p^n}(U_K) \ar[r] \ar[d] & W_{K,\,p^n} \ar[r]^{\hspace{-0.6cm}\psi^{}_{p^n}} \ar[d] & \mathrm{Gal}(K_{(p^{n})}/H_K) \ar[r] \ar[d] & 0\phantom{~,} \\
  & \rvdots \ar[d] & \rvdots \ar[d] & \rvdots \ar[d]&  \\
0 \ar[r]  & r^{}_{p^2}(U_K) \ar[r] \ar[d] & W_{K,\,p^2} \ar[r]^{\hspace{-0.6cm}\psi^{}_{p^2}} \ar[d] & \mathrm{Gal}(K_{(p^2)}/H_K) \ar[r] \ar[d] & 0\phantom{~,} \\
0 \ar[r]  & r^{}_p(U_K) \ar[r]  & W_{K,\,p} \ar[r]^{\hspace{-0.6cm}\psi^{}_p}  & \mathrm{Gal}(K_{(p)}/H_K) \ar[r]  & 0~,
}
\end{equation*}
\caption{An inverse system of short exact sequences}
\label{exacts}
\end{figure}

\noindent
Since the inverse system $\{r^{}_{p^n}(U_K)\}_{n\geq1}$ satisfies
the Mittag-Leffler condition (\cite[p. 164]{Lang02}), we get the exact sequence
\begin{equation*}
\xymatrixcolsep{2pc}\xymatrix{
0 \ar[r]  & \displaystyle\varprojlim_{n\geq1}r^{}_{p^n}(U_K) \ar[r]
& \displaystyle\varprojlim_{n\geq1}W_{K,\,p^n} \ar[r]
& \displaystyle\varprojlim_{n\geq1}\mathrm{Gal}(K_{(p^{n})}/H_K) \ar[r]  & 0
}
\end{equation*}
(\cite[Proposition 10.3]{Lang02}).
Moreover, since
\begin{eqnarray*}
&&\varprojlim_{n\geq1}W_{K,\,p^n}
~\simeq~W_{K,\,p^\infty},\\
&&\varprojlim_{n\geq1}r^{}_{p^n}(U_K)~\simeq~ U_K~(\hookrightarrow W_{K,\,p^\infty}),\\
&&\varprojlim_{n\geq1}\mathrm{Gal}(K_{(p^{n})}/H_K)
~\simeq~\mathrm{Gal}(\bigcup_{n\geq1}K_{(p^n)}/H_K)~=~
\mathrm{Gal}(K_{(p^\infty)}/H_K),
\end{eqnarray*}
we conclude that
$\mathrm{Gal}(K_{(p^\infty)}/H_K)$ is isomorphic to $W_{K,\,p^\infty}/U_K$.
\end{proof}

\begin{proposition}\label{generatorX}
Assume that $K\neq\mathbb{Q}(\sqrt{-1}),\,\mathbb{Q}(\sqrt{-3})$
and $N\geq2$.
\begin{enumerate}
\item[\textup{(i)}] We have
\begin{equation*}
K_{(N)}=H_K\left(X_{\left[\begin{smallmatrix}
0&\frac{1}{N}
\end{smallmatrix}\right]}\right)=
H_K\left(X_\mathbf{v}~|~
\mathbf{v}\in\frac{1}{N}M_{1,\,2}(\mathbb{Z})\setminus
M_{1,\,2}(\mathbb{Z})\right)
\end{equation*}
where $X_\mathbf{v}$ is the $x$-coordinate
function defined in \textup{$\S$\ref{Sect2} (\ref{Xv})}.
\item[\textup{(ii)}] We find that
\begin{equation*}
X_\mathbf{v}^{\psi^{}_N(\gamma)}=X_{\mathbf{v}\gamma}
\quad\left(\mathbf{v}\in\frac{1}{N}M_{1,\,2}(\mathbb{Z})\setminus
M_{1,\,2}(\mathbb{Z}),~\gamma\in W_{K,\,N}\right).
\end{equation*}
\end{enumerate}
\end{proposition}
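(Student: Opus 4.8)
The plan is to prove part (ii) first, since it drives part (i). The starting observation is that comparing the definition of $X_\mathbf{v}$ in (\ref{Xv}) with the Fricke function in (\ref{defFricke}) and using $\mathcal{O}_K=[\tau_K,\,1]$ yields the identity $f_\mathbf{v}(\tau_K)=-2^73^5\,X_\mathbf{v}$; that is, $X_\mathbf{v}$ and the singular value $f_\mathbf{v}(\tau_K)$ differ only by the nonzero rational constant $-2^73^5$. Since each $f_\mathbf{v}$ with $\mathbf{v}\in\frac{1}{N}M_{1,\,2}(\mathbb{Z})\setminus M_{1,\,2}(\mathbb{Z})$ lies in $\mathcal{F}_N$ and is holomorphic, hence finite, at $\tau_K$, this already shows $X_\mathbf{v}\in K_{(N)}$ by (\ref{generationray}), and it reduces the Galois action on $X_\mathbf{v}$ to that on $f_\mathbf{v}(\tau_K)$.

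For part (ii) I would apply $\psi_N(\gamma)$ and invoke Proposition \ref{reciprocity}: for $f\in\mathcal{F}_N$ finite at $\tau_K$ one has $f(\tau_K)^{\psi_N(\gamma)}=f^{[\gamma]}(\tau_K)$. Taking $f=f_\mathbf{v}$ and using that a rational constant is Galois-fixed gives
\[
X_\mathbf{v}^{\psi_N(\gamma)}=-2^{-7}3^{-5}\,f_\mathbf{v}^{[\gamma]}(\tau_K).
\]
The transformation rule of the Fricke family $\{f_\mathbf{v}\}$ from Remark \ref{Frickefamily}(iii) (equivalently \cite[Theorem 6.6]{Shimura}) gives $f_\mathbf{v}^{[\gamma]}=f_{\mathbf{v}\gamma}$, and converting back yields $X_\mathbf{v}^{\psi_N(\gamma)}=X_{\mathbf{v}\gamma}$. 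Here $\mathbf{v}\gamma\notin M_{1,\,2}(\mathbb{Z})$ because $\gamma\in\mathrm{GL}_2(\mathbb{Z}/N\mathbb{Z})$ is invertible modulo $N$, so $X_{\mathbf{v}\gamma}$ is defined. This proves (ii).

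For part (i), the inclusion $H_K\!\left(X_{\left[\begin{smallmatrix}0&\frac{1}{N}\end{smallmatrix}\right]}\right)\subseteq H_K(X_\mathbf{v}\mid\mathbf{v})\subseteq K_{(N)}$ is immediate from the first paragraph together with $H_K\subseteq K_{(N)}$. For the reverse inclusion I would run a Galois-descent argument through the surjection $\psi_N$ of Proposition \ref{reciprocity}, whose kernel is $r_N(U_K)=\{\pm I_2\}$ since $K\neq\mathbb{Q}(\sqrt{-1}),\,\mathbb{Q}(\sqrt{-3})$. Set $L_0=H_K\!\left(X_{\left[\begin{smallmatrix}0&\frac{1}{N}\end{smallmatrix}\right]}\right)$ and suppose $\psi_N(\gamma)$ fixes $L_0$, i.e.\ fixes $X_{\left[\begin{smallmatrix}0&\frac{1}{N}\end{smallmatrix}\right]}$. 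Writing the Cartan element as $\gamma=\begin{bmatrix}t-b_Ks&-c_Ks\\s&t\end{bmatrix}$, a direct computation gives $\left[\begin{smallmatrix}0&\frac{1}{N}\end{smallmatrix}\right]\gamma=\frac{1}{N}\begin{bmatrix}s&t\end{bmatrix}$, so by (ii) the fixing condition reads $X_{\frac{1}{N}[\,s\ t\,]}=X_{\left[\begin{smallmatrix}0&\frac{1}{N}\end{smallmatrix}\right]}$. Lemma \ref{basicproperties}(i) then forces $\begin{bmatrix}s&t\end{bmatrix}\equiv\pm\begin{bmatrix}0&1\end{bmatrix}\Mod{N}$, i.e.\ $s\equiv0$ and $t\equiv\pm1$, hence $\gamma\equiv\pm I_2$ and $\psi_N(\gamma)$ is trivial. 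Thus $\mathrm{Gal}(K_{(N)}/L_0)$ is trivial and $L_0=K_{(N)}$, giving equality throughout.

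The computation is short; the one point deserving care is that the single value $X_{\left[\begin{smallmatrix}0&\frac{1}{N}\end{smallmatrix}\right]}$ already generates $K_{(N)}$ over $H_K$. This works precisely because $W_{K,\,N}$ is the Cartan subgroup: its elements are parametrized by the pair $(s,t)$, and $\left[\begin{smallmatrix}0&\frac{1}{N}\end{smallmatrix}\right]\gamma=\frac{1}{N}\begin{bmatrix}s&t\end{bmatrix}$ recovers both parameters simultaneously, so fixing this one invariant pins down $\gamma$ up to $\pm I_2$; for a general subgroup of $\mathrm{GL}_2(\mathbb{Z}/N\mathbb{Z})$ one generator would not suffice. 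The main ingredient to handle with care is the Fricke-family transformation $f_\mathbf{v}^{[\gamma]}=f_{\mathbf{v}\gamma}$ for \emph{every} $\mathbf{v}\in\frac{1}{N}M_{1,\,2}(\mathbb{Z})\setminus M_{1,\,2}(\mathbb{Z})$, not only the primitive ones in $V_N$, which I would deduce from \cite[Theorem 6.6]{Shimura}.
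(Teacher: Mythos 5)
Your proposal is correct, and part (ii) follows exactly the paper's route: convert $X_\mathbf{v}$ to the singular value $-2^{-7}3^{-5}f_\mathbf{v}(\tau^{}_K)$ via (\ref{Xf}), then combine Proposition \ref{reciprocity} with the Fricke-family rule $f_\mathbf{v}^{[\gamma]}=f_{\mathbf{v}\gamma}$. You are also right to flag that Remark \ref{Frickefamily} only covers primitive $\mathbf{v}$, so for imprimitive vectors one must fall back on Shimura's Theorem 6.6 (an imprimitive $\mathbf{v}$ is primitive modulo a divisor $M\geq2$ of $N$, and the $\mathrm{GL}_2(\mathbb{Z}/N\mathbb{Z})$-action factors through $\mathrm{GL}_2(\mathbb{Z}/M\mathbb{Z})$); the paper glosses over this point entirely.

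Where you genuinely diverge is part (i). The paper disposes of the first equality $K_{(N)}=H_K\bigl(X_{\left[\begin{smallmatrix}0&\frac{1}{N}\end{smallmatrix}\right]}\bigr)$ by citing the classical Corollary to Theorem 7 in Chapter 10 of Lang, and of the second by (\ref{generationray}). You instead reprove the hard containment $K_{(N)}\subseteq H_K\bigl(X_{\left[\begin{smallmatrix}0&\frac{1}{N}\end{smallmatrix}\right]}\bigr)$ by Galois descent through $\psi^{}_N$: an element $\psi^{}_N(\gamma)$ fixing $X_{\left[\begin{smallmatrix}0&\frac{1}{N}\end{smallmatrix}\right]}$ satisfies $X_{\frac{1}{N}\left[\begin{smallmatrix}s&t\end{smallmatrix}\right]}=X_{\left[\begin{smallmatrix}0&\frac{1}{N}\end{smallmatrix}\right]}$, and Lemma \ref{basicproperties} (i) forces $s\equiv0$, $t\equiv\pm1$, hence $\gamma=\pm I_2$, which is exactly the kernel $r^{}_N(U_K)$ of $\psi^{}_N$ when $K\neq\mathbb{Q}(\sqrt{-1}),\,\mathbb{Q}(\sqrt{-3})$. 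The computation $\left[\begin{smallmatrix}0&\frac{1}{N}\end{smallmatrix}\right]\gamma=\frac{1}{N}\left[\begin{smallmatrix}s&t\end{smallmatrix}\right]$ is correct, and your observation that the bottom row of a Cartan element recovers both parameters $(s,t)$ is precisely the reason a single Weber value suffices. This buys a self-contained proof that depends only on results already stated in the paper (Proposition \ref{reciprocity}, Lemma \ref{basicproperties}), at the cost of a few extra lines; the paper's citation is shorter but imports the full strength of Lang's theorem. Both are valid.
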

\begin{proof}
We note by the definition (\ref{defFricke}) of
a Fricke function that
\begin{equation}\label{Xf}
X_\mathbf{v}=-\frac{1}{2^73^5}f_\mathbf{v}(\tau^{}_K)
\quad\left(\mathbf{v}\in\frac{1}{N}M_{1,\,2}(\mathbb{Z})\setminus
M_{1,\,2}(\mathbb{Z})\right).
\end{equation}
\begin{enumerate}
\item[\textup{(i)}] For the first equality, see (\ref{generationHilbert}) and \cite[Corollary to Theorem 7 in Chapter 10]{Lang87}.
The second equality follows from (\ref{generationray}) and the fact $f_\mathbf{v}\in\mathcal{F}_N$.
\item[\textup{(ii)}] This follows from Remark \ref{Frickefamily} and Proposition
\ref{reciprocity}.
\end{enumerate}
\end{proof}

\begin{remark}\label{Hasse-Ramachandra}
\begin{enumerate}
\item[(i)] Proposition \ref{generatorX} (i)
is a consequence of the main theorem of
the theory of complex multiplication.
By using the Kronecker limit formula and
some arithmetic properties of Fricke functions,
Jung, Koo, Shin and Yoon (\cite{J-K-S18} and \cite{K-S-Y}) improved
Proposition \ref{generatorX} (i) so that
\begin{equation*}
K_{(N)}=K\left(X_{\left[\begin{smallmatrix}0&\frac{1}{N}\end{smallmatrix}\right]}\right)
\quad\textrm{or}\quad
K\left(X_{\left[\begin{smallmatrix}0&\frac{2}{N}\end{smallmatrix}\right]}\right),
\end{equation*}
which gives a partial answer to the problem of Hasse and Ramachandra
(\cite[p. 91]{F-L-R} and \cite[p. 105]{Ramachandra}).
\item[(ii)] Let $K$ be an arbitrary imaginary quadratic field,
and let $E$ be the elliptic curve given by the
Weierstrass equation
\begin{equation*}
E~:~y^2=4x^3-g^{}_2x-g^{}_3
\end{equation*}
where $g^{}_2=g^{}_2(\mathcal{O}_K)$ and $g^{}_3=g^{}_3(\mathcal{O}_K)$.
The Weber function $h:E\rightarrow\mathbb{P}^1(\mathbb{C})$
is defined by
\begin{equation*}
h(\varphi(z))=
\left\{\begin{array}{rl}
\displaystyle\frac{g^{}_2g^{}_3}{\Delta}\,x
& \textrm{if}~K\neq\mathbb{Q}(\sqrt{-1}),\,\mathbb{Q}(\sqrt{-3}),\vspace{0.1cm}\\
\displaystyle\frac{g_2^2}{\Delta}\,x^2
& \textrm{if}~K=\mathbb{Q}(\sqrt{-1}),\vspace{0.1cm}\\
\displaystyle\frac{g_3}{\Delta}\,x^3
& \textrm{if}~K=\mathbb{Q}(\sqrt{-3})
\end{array}\right.\quad(z\in\mathbb{C})
\end{equation*}
where $\varphi:\mathbb{C}/\mathcal{O}_K\rightarrow E$
is the parametrization given in (\ref{isomorphism}) and $\Delta=g_2^3-27g_3^2$.
This function gives rise to an isomorphism of the quotient variety
$E/\mathrm{Aut}(E)$ onto $\mathbb{P}^1(\mathbb{C})$
(\cite[Theorem 7 in Chapter 1]{Lang87}).
Originally, \cite[Corollary to Theorem 7 in Chapter 10]{Lang87}
states
\begin{equation*}
K_{(N)}=H_K\left(h(\varphi(z))~|~z\in\mathbb{C}\setminus\mathcal{O}_K~
\textrm{satisfies}~Nz\in\mathcal{O}_K\right).
\end{equation*}
We note that if $K$ is different from $\mathbb{Q}(\sqrt{-1})$
and $\mathbb{Q}(\sqrt{-3})$, then
\begin{equation*}
X_\mathbf{v}=
h(\varphi(v_1\tau^{}_K+v_2))
\quad\left(\mathbf{v}
=\begin{bmatrix}v_1 & v_2\end{bmatrix}\in\frac{1}{N}M_{1,\,2}(\mathbb{Z})\setminus
M_{1,\,2}(\mathbb{Z})\right).
\end{equation*}
\end{enumerate}
\end{remark}

\section {The field generated by $N$-torsion points}

Unless otherwise specified we assume that
$K$ is an imaginary quadratic field other than $\mathbb{Q}(\sqrt{-1})$
and $\mathbb{Q}(\sqrt{-3})$, and $N$ is a positive integer such that $N\geq2$.
Let $\mathbb{Q}(E_K[N])$ be the extension field of $\mathbb{Q}$
generated by the coordinates of $N$-torsion points on $E_K[N]$, namely,
\begin{equation*}
\mathbb{Q}(E_K[N])=\mathbb{Q}\left(
X_\mathbf{v},\,Y_\mathbf{v}~|~
\mathbf{v}\in\frac{1}{N}M_{1,\,2}(\mathbb{Z})\setminus
M_{1,\,2}(\mathbb{Z})\right).
\end{equation*}
We shall describe the field
$\mathbb{Q}(E_K[N])$ by comparing it with the ray class field $K_{(N)}$
(Theorem \ref{pre1}).

\begin{lemma}\label{K-Lconjugation}
Let $\{h_\mathbf{v}\}_{\mathbf{v}\in V_N}$ be a Fricke family
defined in \textup{Remark \ref{Frickefamily}}.
For each $\mathbf{v}\in V_N$,
we have
\begin{equation*}
\overline{h_\mathbf{v}(\tau^{}_K)}=
h_{\mathbf{v}\left[\begin{smallmatrix}1&\phantom{-}b_K\\0&-1\end{smallmatrix}\right]}(\tau^{}_K)
\end{equation*}
where $\overline{\,\cdot\,}$ means the complex conjugation.
\end{lemma}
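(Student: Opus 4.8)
The plan is to compute $\overline{h_\mathbf{v}(\tau^{}_K)}$ directly from the $q$-expansion of $h_\mathbf{v}$ and then recognize the result as the value at $\tau^{}_K$ of the Galois conjugate of $h_\mathbf{v}$ by $\gamma_0=\left[\begin{smallmatrix}1&\phantom{-}b_K\\0&-1\end{smallmatrix}\right]$. First I would record the arithmetic of $\tau^{}_K$: since $\min(\tau^{}_K,\mathbb{Q})=x^2+b_Kx+c_K=(x-\tau^{}_K)(x-\overline{\tau^{}_K})$, the two roots satisfy $\tau^{}_K+\overline{\tau^{}_K}=-b_K$, hence $\overline{\tau^{}_K}=-b_K-\tau^{}_K$. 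Writing $q_K=e^{2\pi\mathrm{i}\tau^{}_K}$, this yields $\overline{q_K^{n/N}}=e^{-2\pi\mathrm{i}n\overline{\tau^{}_K}/N}=\zeta_N^{nb_K}q_K^{n/N}$ for every integer $n$.

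Next, since $h_\mathbf{v}\in\mathcal{F}_N$ it has a Fourier expansion $h_\mathbf{v}(\tau)=\sum_{n\gg-\infty}c(n)q^{n/N}$ with $c(n)\in\mathbb{Q}(\zeta^{}_N)$, and because $h_\mathbf{v}$ is holomorphic on $\mathbb{H}$ and $N$-periodic, this Laurent series in $q^{1/N}$ converges at $\tau^{}_K$ (where $0<|q_K^{1/N}|<1$), so it may be conjugated term by term. As complex conjugation restricts to the automorphism $\sigma_{-1}\colon\zeta^{}_N\mapsto\zeta_N^{-1}$ on $\mathbb{Q}(\zeta^{}_N)$, combining $\overline{c(n)}=c(n)^{\sigma_{-1}}$ with the relation above gives $\overline{h_\mathbf{v}(\tau^{}_K)}=\sum_n c(n)^{\sigma_{-1}}\zeta_N^{nb_K}q_K^{n/N}$.

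Finally I would interpret the right-hand side as a Galois conjugate. Using the factorization $\gamma_0=\left[\begin{smallmatrix}1&-b_K\\0&1\end{smallmatrix}\right]\left[\begin{smallmatrix}1&0\\0&-1\end{smallmatrix}\right]$, which realizes $\gamma_0$ inside the decomposition $\mathrm{GL}_2(\mathbb{Z}/N\mathbb{Z})/\{\pm I_2\}=\mathrm{SL}_2(\mathbb{Z}/N\mathbb{Z})/\{\pm I_2\}\cdot G_N$, the $\mathrm{SL}_2$-factor acts by (\ref{composition}) as the translation $\tau\mapsto\tau-b_K$, sending $c(n)\mapsto c(n)\zeta_N^{-nb_K}$, and then the $G_N$-factor $\left[\begin{smallmatrix}1&0\\0&-1\end{smallmatrix}\right]$ applies $\sigma_{-1}$ to all Fourier coefficients. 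Since $(c(n)\zeta_N^{-nb_K})^{\sigma_{-1}}=c(n)^{\sigma_{-1}}\zeta_N^{nb_K}$, the composite $h_\mathbf{v}^{\gamma_0}$ has $q$-expansion $\sum_n c(n)^{\sigma_{-1}}\zeta_N^{nb_K}q^{n/N}$, so $h_\mathbf{v}^{\gamma_0}(\tau^{}_K)$ equals the expression found above. By property (iii) of a Fricke family in Remark \ref{Frickefamily} we have $h_\mathbf{v}^{\gamma_0}=h_{\mathbf{v}\gamma_0}$, and therefore $\overline{h_\mathbf{v}(\tau^{}_K)}=h_{\mathbf{v}\gamma_0}(\tau^{}_K)$, as claimed.

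I expect the point needing most care to be the bookkeeping of the two actions: checking that the order of the factors in $\gamma_0$ is compatible with the right-action convention $h_\mathbf{v}^{\alpha\beta}=(h_\mathbf{v}^{\alpha})^{\beta}$, and keeping the signs straight in both $\overline{\tau^{}_K}=-b_K-\tau^{}_K$ and the translation step so that the two factors $\zeta_N^{\mp nb_K}$ cancel correctly against $\sigma_{-1}$. As an independent check one can test the identity on the Fricke family $\{f_\mathbf{v}\}$: there $\overline{X_\mathbf{v}}=X_{\mathbf{v}\gamma_0}$ follows from the evenness of $\wp$ together with the reality of $g^{}_2(\mathcal{O}_K),g^{}_3(\mathcal{O}_K),\Delta(\mathcal{O}_K)$ and the conjugation-invariance of the lattice $\mathcal{O}_K$, which reassures that the matrix $\gamma_0$ and the direction of the twist are the correct ones.
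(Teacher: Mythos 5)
Your argument is correct, but it is worth noting that the paper does not actually prove this lemma: its ``proof'' is the single citation to \cite[Proposition 1.4 in Chapter 1]{K-L}. What you have written is a self-contained derivation of that cited fact, and it is essentially the standard one. All the delicate points check out: $\overline{\tau^{}_K}=-b_K-\tau^{}_K$ gives $\overline{q_K^{n/N}}=\zeta_N^{nb_K}q_K^{n/N}$; the term-by-term conjugation is legitimate because a Fricke family is holomorphic on $\mathbb{H}$ and $N$-periodic, so the Laurent series in $q^{1/N}$ converges on the whole punctured disk and in particular at $\tau^{}_K$; the factorization $\left[\begin{smallmatrix}1&\phantom{-}b_K\\0&-1\end{smallmatrix}\right]=\left[\begin{smallmatrix}1&-b_K\\0&\phantom{-}1\end{smallmatrix}\right]\left[\begin{smallmatrix}1&\phantom{-}0\\0&-1\end{smallmatrix}\right]$ is consistent with the decomposition $\mathrm{SL}_2(\mathbb{Z}/N\mathbb{Z})/\{\pm I_2\}\cdot G_N$ and the right-action convention used in the paper, and the two twists $\zeta_N^{-nb_K}$ and $\sigma_{-1}$ compose to give exactly the conjugated expansion. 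Property (iii) of a Fricke family then closes the argument. Your sanity check on $\{f_\mathbf{v}\}$ via the evenness of $\wp$ and the reality of $g^{}_2(\mathcal{O}_K)$, $g^{}_3(\mathcal{O}_K)$ is also correct ($\mathbf{v}\left[\begin{smallmatrix}1&\phantom{-}b_K\\0&-1\end{smallmatrix}\right]=\begin{bmatrix}v_1&v_1b_K-v_2\end{bmatrix}$ matches the direct computation). So relative to the paper you gain a complete proof in place of a reference, at the cost of reproducing an argument already available in Kubert--Lang; nothing is missing.
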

\begin{proof}
See \cite[Proposition 1.4 in Chapter 1]{K-L}.
\end{proof}

Recall the relation (\ref{Xf}) and the fact
that $\{f_\mathbf{v}\}_{\mathbf{v}\in V_N}$ is a Fricke family of level $N$
as mentioned in Remark \ref{Frickefamily}.

\begin{lemma}\label{real}
Let $R_N=\displaystyle\mathbb{Q}\left(X_\mathbf{v}~|~
\mathbf{v}\in\frac{1}{N}M_{1,\,2}(\mathbb{Z})\setminus M_{1,\,2}(\mathbb{Z})\right)$.
\begin{enumerate}
\item[\textup{(i)}] If $N=2$ and $d_K\equiv0\Mod{4}$, then
$R_N$ is the maximal real subfield of $K_{(2)}$ with
$[K_{(2)}:R_N]=2$.
\item[\textup{(ii)}] Otherwise, if $N\geq3$ or $d_K\equiv1\Mod{4}$, then  $R_N=K_{(N)}$.
\end{enumerate}
\end{lemma}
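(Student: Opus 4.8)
The plan is to realize $R_N$ as a subfield of $K_{(N)}$ of index $1$ or $2$, and to pin down the index by analyzing how complex conjugation and the Cartan group $W_{K,\,N}$ act on the generators $X_\mathbf{v}$. Write $\alpha=\left[\begin{smallmatrix}1&b_K\\0&-1\end{smallmatrix}\right]$, so that $\det\alpha=-1$ and $\alpha^2=I_2$. First I would record three facts. By (\ref{Xf}) each $X_\mathbf{v}$ lies in $K_{(N)}$, hence $R_N\subseteq K_{(N)}$; since the constant $-1/2^73^5$ is real and $\{f_\mathbf{v}\}$ is a Fricke family, Lemma \ref{K-Lconjugation} gives the conjugation rule $\overline{X_\mathbf{v}}=X_{\mathbf{v}\alpha}$; and by Proposition \ref{generatorX}(ii) the group $W_{K,\,N}$ acts by $X_\mathbf{v}^{\psi^{}_N(\gamma)}=X_{\mathbf{v}\gamma}$. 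Throughout I would use the elementary consequence of Lemma \ref{basicproperties}(i) that, for $M\in\mathrm{GL}_2(\mathbb{Z}/N\mathbb{Z})$, one has $X_{\mathbf{v}M}=X_\mathbf{v}$ for \emph{all} $\mathbf{v}\in\tfrac1NM_{1,\,2}(\mathbb{Z})\setminus M_{1,\,2}(\mathbb{Z})$ if and only if $M\equiv\pm I_2\Mod{N}$ when $N\geq3$, and $M\equiv I_2\Mod{2}$ when $N=2$ (test $M$ against $\mathbf{v}=[\tfrac1N\ 0],[0\ \tfrac1N],[\tfrac1N\ \tfrac1N]$).

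The key reduction is that $R_N$ contains $\mathbb{Q}(j(\mathcal{O}_K))$. Indeed, by Remark \ref{Hasse-Ramachandra}(ii) the $X_\mathbf{v}$ are exactly the $x$-coordinates of the nonzero points of $E_K[N]$ for the model (\ref{intromodel}), so $\prod_\mathbf{v}(T-X_\mathbf{v})$ is, up to a nonzero rational constant, the $N$th division polynomial of $E_K$; its coefficients lie in $R_N$ and generate $\mathbb{Q}(J)=\mathbb{Q}(j(\mathcal{O}_K))$ (already from the defining cubic when $N=2$, and in general because the two coefficients of (\ref{intromodel}) have ratio $(J-1)/27$ and are recovered from these symmetric functions). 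Combining this with $K_{(N)}=H_K\cdot R_N$ from Proposition \ref{generatorX}(i) and $H_K=K(j(\mathcal{O}_K))=K\cdot\mathbb{Q}(j(\mathcal{O}_K))$ from (\ref{generationHilbert}) gives $K_{(N)}=K\cdot R_N=R_N(\sqrt{d_K})$. Hence $[K_{(N)}:R_N]\leq2$, with equality precisely when $\sqrt{d_K}\notin R_N$.

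Next I would settle when $R_N$ is real. Since $R_N=\mathbb{Q}(X_\mathbf{v})$, it is real exactly when $\mathfrak{c}$ fixes every $X_\mathbf{v}$, i.e., by the conjugation rule and the criterion above, exactly when $\alpha\equiv\pm I_2\Mod{N}$. Inspecting $\alpha$ shows this holds if and only if $N\mid2$ and $b_K\equiv0\Mod{N}$; as $b_K=0$ for $d_K\equiv0\Mod{4}$ and $b_K=1$ for $d_K\equiv1\Mod{4}$, this is equivalent to $N=2$ and $d_K\equiv0\Mod{4}$. This already proves that in case (i) the field $R_N$ is real, whence $\sqrt{d_K}\notin R_N$ and $[K_{(2)}:R_2]=2$; since $R_2\subseteq K_{(2)}\cap\mathbb{R}$ and the maximal real subfield of $K_{(2)}$ also has index $2$, the two coincide.

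It remains to treat case (ii), where $R_N$ is not real, and to show $R_N=K_{(N)}$, i.e., $[K_{(N)}:R_N]=1$. Suppose instead the index were $2$ and let $\sigma$ generate $\mathrm{Gal}(K_{(N)}/R_N)$. Then $\sigma$ fixes $R_N\supseteq\mathbb{Q}(j(\mathcal{O}_K))$ and sends $\sqrt{d_K}\mapsto-\sqrt{d_K}$, so $\sigma$ and $\mathfrak{c}$ agree on $H_K=\mathbb{Q}(j(\mathcal{O}_K))(\sqrt{d_K})$; thus $\mathfrak{c}^{-1}\sigma$ fixes $H_K$ and lies in the image of $\psi^{}_N$, giving $\sigma=\mathfrak{c}\,\psi^{}_N(\gamma)$ for some $\gamma\in W_{K,\,N}$. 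Applying this to the generators yields $\sigma(X_\mathbf{v})=\overline{X_{\mathbf{v}\gamma}}=X_{\mathbf{v}\gamma\alpha}$, and $\sigma$ fixing $R_N$ forces $\gamma\alpha\equiv\pm I_2\Mod{N}$ (resp. $\equiv I_2$ when $N=2$), that is $\gamma\equiv\pm\alpha\Mod{N}$. But the Cartan form (\ref{Cartan}) shows that $\pm\alpha\in W_{K,\,N}$ would require $b_K\equiv0$ and $N\mid2$, which fails in every case (ii) scenario; this contradiction gives $R_N=K_{(N)}$. The main obstacle is the reduction $\mathbb{Q}(j(\mathcal{O}_K))\subseteq R_N$: it is what forces the hypothetical degree-$2$ automorphism to restrict to complex conjugation on $H_K$, and hence to be pinned down by the rigid shape of $W_{K,\,N}$. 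Once that containment is secured, the remaining steps are short matrix computations.
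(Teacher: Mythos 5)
Your proof is correct in outline but follows a genuinely different route from the paper's. For part (ii) the paper never establishes $\mathbb{Q}(j(\mathcal{O}_K))\subseteq R_N$: instead it invokes the Hasse--Ramachandra-type generation result of Remark \ref{Hasse-Ramachandra} (i) to get $K\left(X_{\left[\begin{smallmatrix}0&\frac{t}{N}\end{smallmatrix}\right]}\mid t\right)=K_{(N)}$, sandwiches $R_N$ between this field and its real subfield $\mathbb{Q}\left(X_{\left[\begin{smallmatrix}0&\frac{t}{N}\end{smallmatrix}\right]}\mid t\right)$, and concludes from $[K:\mathbb{Q}]=2$ together with the strict inclusion $R_N\supsetneq R_N\cap\mathbb{R}$ (your non-reality computation) that $R_N=K_{(N)}$. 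Your substitute---writing a hypothetical index-two automorphism as $\mathfrak{c}\,\psi^{}_N(\gamma)$ and showing $\pm\left[\begin{smallmatrix}1&\phantom{-}b_K\\0&-1\end{smallmatrix}\right]\notin W_{K,\,N}$ outside case (i)---is a valid use of Propositions \ref{reciprocity} and \ref{generatorX} (ii), Lemma \ref{K-Lconjugation} and (in effect) Lemma \ref{betaI}, and it buys independence from the external generation theorem cited in Remark \ref{Hasse-Ramachandra} (i); part (i) is essentially identical in both treatments.

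The one step you must flesh out is the containment $\mathbb{Q}(j(\mathcal{O}_K))\subseteq R_N$ for $N\geq3$, on which your entire argument for (ii) rests (it is what gives $K_{(N)}=K\cdot R_N$ and forces $\sigma|_{H_K}=\mathfrak{c}|_{H_K}$). You assert that the two coefficients $A=\frac{J(J-1)}{27}$ and $B=J\left(\frac{J-1}{27}\right)^2$ of (\ref{intromodel}) ``are recovered from these symmetric functions,'' but a priori the relevant coefficients of the division polynomial could be \emph{zero} multiples of $A$ and $B$, and then nothing would be recovered. The claim is true and checkable: with $X_\mathbf{v}=c\,\wp(v_1\tau^{}_K+v_2)$, $c=g^{}_2g^{}_3/\Delta$, the identities $\wp^2=\frac{1}{6}\wp''+\frac{1}{12}g^{}_2$ and $10\wp^3=\frac{1}{12}\wp''''+\frac{3}{2}g^{}_2\wp+g^{}_3$, combined with $\sum_{\omega}\wp^{(k)}(z+\omega)=N^{k+2}\wp^{(k)}(Nz)$ and the Laurent expansion of $\wp$ at $0$, yield $\sum_\mathbf{v}X_\mathbf{v}=0$, $\sum_\mathbf{v}X_\mathbf{v}^2=\frac{(N^2-1)(N^2+6)}{60}A$ and $\sum_\mathbf{v}X_\mathbf{v}^3=\frac{(N^2-1)(N^4+N^2+15)}{140}B$, whose constants are nonzero for all $N\geq2$; since $B/A=\frac{J-1}{27}$ and $A\neq0$, this gives $\mathbb{Q}(J)=\mathbb{Q}(A,B)\subseteq R_N$. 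With that computation (or an explicit reference for the division-polynomial coefficients) supplied, your proof is complete.
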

\begin{proof}
\begin{enumerate}
\item[(i)] If $N=2$ and $d_K\equiv0\Mod{4}$, then we have
$b_K=0$ and
\begin{eqnarray*}
\overline{X}_\mathbf{v}&=&X_{\mathbf{v}\left[
\begin{smallmatrix}1&\phantom{-}0\\0&-1\end{smallmatrix}\right]}
\quad\textrm{by Lemma \ref{K-Lconjugation}}\quad
\left(\mathbf{v}\in\frac{1}{2}M_{1,\,2}(\mathbf{v})
\setminus M_{1,\,2}(\mathbb{Z})\right)\\
&=&X_\mathbf{v}\quad\textrm{by Lemma \ref{basicproperties} (i)}.
\end{eqnarray*}
This shows that $R_N$ is contained in $\mathbb{R}$. Moreover, we get by
Remark \ref{Hasse-Ramachandra} (i) that
$KR_N=K_{(2)}$. Thus we obtain that $[K_{(2)}:R_N]=2$ and
$R_N$ is the maximal real subfield of $K_{(2)}$.
\item[(ii)] Now, we assume that $N\geq3$ or $d_K\equiv1\Mod{4}$.
We derive by Lemma \ref{K-Lconjugation} and Lemma \ref{basicproperties} (i) that
if $N\geq3$ and $d_K\equiv0\Mod{4}$, then $b_K=0$ and
\begin{equation*}
\overline{X}_{\left[\begin{smallmatrix}\frac{1}{N}&\frac{1}{N}\end{smallmatrix}\right]}=
X_{\left[\begin{smallmatrix}\frac{1}{N}&-\frac{1}{N}\end{smallmatrix}\right]}
\neq X_{\left[\begin{smallmatrix}\frac{1}{N}&\frac{1}{N}\end{smallmatrix}\right]},
\end{equation*}
and if $d_K\equiv1\Mod{4}$, then
$b_K=1$ and
\begin{equation*}
\overline{X}_{\left[\begin{smallmatrix}\frac{1}{N}&0\end{smallmatrix}\right]}=
X_{\left[\begin{smallmatrix}\frac{1}{N}&\frac{1}{N}\end{smallmatrix}\right]}
\neq X_{\left[\begin{smallmatrix}\frac{1}{N}&0\end{smallmatrix}\right]}.
\end{equation*}
This observation holds that
\begin{equation}\label{RNR}
R_N\not\subseteq\mathbb{R}.
\end{equation}
On the other hand, we note by Lemma \ref{K-Lconjugation} that
\begin{equation}\label{XtR}
X_{\left[\begin{smallmatrix}0 & \frac{t}{N}\end{smallmatrix}\right]}\in\mathbb{R}
\quad\textrm{for all $t\in\mathbb{Z}$ such that $t\not\equiv0\Mod{N}$}.
\end{equation}
We then find that
\begin{eqnarray*}
&&K\left(
X_{\left[\begin{smallmatrix}0 & \frac{t}{N}\end{smallmatrix}\right]}~|~
t\in\mathbb{Z}~\textrm{satisfies}~t\not\equiv0\Mod{N}
\right)\\&=&
K_{(N)}\quad
\textrm{by Proposition \ref{generatorX} (i) and Remark \ref{Hasse-Ramachandra} (i)}\\
&\supseteq& R_N\quad\textrm{again by Proposition \ref{generatorX} (i)}\\
&\supsetneq&R_N\cap\mathbb{R}\quad\textrm{by (\ref{RNR})}\\
&\supseteq&\mathbb{Q}\left(
X_{\left[\begin{smallmatrix}0 & \frac{t}{N}\end{smallmatrix}\right]}~|~
t\in\mathbb{Z}~\textrm{satisfies}~t\not\equiv0\Mod{N}
\right)\quad\textrm{by (\ref{XtR})}.
\end{eqnarray*}
Therefore we conclude by the fact $[K:\mathbb{Q}]=2$ that $R_N=K_{(N)}$.
\end{enumerate}
\end{proof}

\begin{lemma}\label{Yratio}
If $\displaystyle\mathbf{u},\,\mathbf{v}\in
\frac{1}{N}M_{1,\,2}(\mathbb{Z})\setminus
M_{1,\,2}(\mathbb{Z})$
such that $2\mathbf{v}\not\in M_{1,\,2}(\mathbb{Z})$, then
$\displaystyle\frac{Y_\mathbf{u}}{Y_\mathbf{v}}$ lies in $K_{(N)}$.
\end{lemma}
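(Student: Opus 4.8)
The plan is to cancel the troublesome square-root factor by passing to the ratio, rewrite everything through Siegel functions by means of Lemma \ref{pg}, and then recognize the resulting function as an element of $\mathcal{F}_N$ via the modularity criterion of Proposition \ref{modularity}; its value at $\tau^{}_K$ will then lie in $K_{(N)}$ by (\ref{generationray}).

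First I would dispose of the degenerate case. If $2\mathbf{u}\in M_{1,\,2}(\mathbb{Z})$, then $Y_\mathbf{u}=0$ by Lemma \ref{basicproperties} (iii), so $Y_\mathbf{u}/Y_\mathbf{v}=0\in K_{(N)}$; hence I may assume $2\mathbf{u}\not\in M_{1,\,2}(\mathbb{Z})$ as well. Note that $Y_\mathbf{v}\neq0$ by hypothesis and Lemma \ref{basicproperties} (iii), so the ratio is well defined. The crucial point is that the factor $\sqrt{(g^{}_2(\mathcal{O}_K)g^{}_3(\mathcal{O}_K)/\Delta(\mathcal{O}_K))^3}$ in (\ref{Yv}) is independent of the vector and therefore cancels in the quotient, leaving
\begin{equation*}
\frac{Y_\mathbf{u}}{Y_\mathbf{v}}=\frac{\wp'(u_1\tau^{}_K+u_2;\,\mathcal{O}_K)}{\wp'(v_1\tau^{}_K+v_2;\,\mathcal{O}_K)}.
\end{equation*}
Since $\mathcal{O}_K=[\tau^{}_K,\,1]$ and $2\mathbf{u},\,2\mathbf{v}\not\in M_{1,\,2}(\mathbb{Z})$, I would apply Lemma \ref{pg} to both numerator and denominator; the common factor $-\eta(\tau^{}_K)^6$ cancels as well, yielding
\begin{equation*}
\frac{Y_\mathbf{u}}{Y_\mathbf{v}}=h(\tau^{}_K),\qquad
h=\frac{g_{2\mathbf{u}}\,g_\mathbf{v}^{4}}{g_{2\mathbf{v}}\,g_\mathbf{u}^{4}}.
\end{equation*}

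It then remains to show that $h\in\mathcal{F}_N$ and that $h$ is finite at $\tau^{}_K$. Writing $N\mathbf{u}=[\,a\ \ b\,]$ and $N\mathbf{v}=[\,c\ \ d\,]$ with $a,b,c,d\in\mathbb{Z}$, the four vectors $2\mathbf{u},\,\mathbf{v},\,2\mathbf{v},\,\mathbf{u}$ all lie in $(1/N)M_{1,\,2}(\mathbb{Z})\setminus M_{1,\,2}(\mathbb{Z})$ (indeed $N\cdot2\mathbf{u}=2N\mathbf{u}\in M_{1,\,2}(\mathbb{Z})$, and $2\mathbf{u},\,2\mathbf{v}\not\in M_{1,\,2}(\mathbb{Z})$). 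I would therefore apply Proposition \ref{modularity} to the family $m(2\mathbf{u})=1$, $m(\mathbf{v})=4$, $m(2\mathbf{v})=-1$, $m(\mathbf{u})=-4$, where any coincidences among these vectors only merge weights and make the check easier. Each of the three required sums vanishes identically: for the first-coordinate sum
\begin{equation*}
\sum_\mathbf{w}m(\mathbf{w})(Nw_1)^2=(2a)^2+4c^2-(2c)^2-4a^2=0,
\end{equation*}
and likewise the second-coordinate sum is $0$, the mixed sum equals $(2a)(2b)+4cd-(2c)(2d)-4ab=0$, and $\sum_\mathbf{w}m(\mathbf{w})=1+4-1-4=0$ forces the third congruence. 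Hence $h\in\mathcal{F}_N$ by Proposition \ref{modularity}.

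Finally, every Siegel function is holomorphic and non-vanishing on $\mathbb{H}$ (as noted after (\ref{infiniteproduct})), so $h$ is holomorphic and nonzero at $\tau^{}_K\in\mathbb{H}$; in particular $h(\tau^{}_K)$ is finite. Therefore $Y_\mathbf{u}/Y_\mathbf{v}=h(\tau^{}_K)\in K_{(N)}$ by (\ref{generationray}). The only genuinely delicate point I anticipate is confirming that $h$ lands in $\mathcal{F}_N$ (with Fourier coefficients in $\mathbb{Q}(\zeta^{}_N)$) rather than being merely $\Gamma(N)$-modular over $\mathbb{C}$, but this is exactly what Proposition \ref{modularity} delivers once the three congruences hold, and here they hold on the nose.
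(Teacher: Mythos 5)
Your proposal is correct and follows essentially the same route as the paper: cancel the common factor in the ratio, use Lemma \ref{pg} to express $Y_\mathbf{u}/Y_\mathbf{v}$ as the value at $\tau^{}_K$ of $g_{2\mathbf{u}}\,g_\mathbf{v}^4/(g_\mathbf{u}^4\,g_{2\mathbf{v}})$, verify the three congruences of Proposition \ref{modularity} with the weights $1,4,-4,-1$, and conclude via (\ref{generationray}). The only nuance is that Proposition \ref{modularity} by itself yields only $\Gamma(N)$-modularity; the fact that the Fourier coefficients lie in $\mathbb{Q}(\zeta^{}_N)$ (so that the function is in $\mathcal{F}_N$) comes, as the paper notes, from the infinite product expansion (\ref{infiniteproduct}) rather than from the modularity criterion itself.
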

\begin{proof}
Observe first that $Y_\mathbf{v}\neq0$
by Lemma \ref{basicproperties} (iii).
If $2\mathbf{u}\in M_{1,\,2}(\mathbb{Z})$, then
the assertion is straightforward because
$Y_\mathbf{u}=0$ again by Lemma \ref{basicproperties} (iii).
Now, assume that $2\mathbf{u}\not\in M_{1,\,2}(\mathbb{Z})$.
We assert by Lemma \ref{pg} and
the definition (\ref{Yv}) that
\begin{equation*}
\frac{Y_\mathbf{u}}{Y_\mathbf{v}}=
g(\tau^{}_K)\quad\textrm{with}\quad
g=\frac{g_{2\mathbf{u}\,}
g_\mathbf{v}^4}
{g_\mathbf{u}^4\,g_{2\mathbf{v}}}.
\end{equation*}
If we set the family
$\{m(\mathbf{a})\}_{\mathbf{a}\in(1/N)
M_{1,\,2}(\mathbb{Z})\setminus M_{1,\,2}(\mathbb{Z})}$ of integers
so that
\begin{equation*}
\sum_{\mathbf{a}}
m(\mathbf{a})\cdot(\mathbf{a})=1\cdot(2\mathbf{u})+4\cdot
(\mathbf{v})+(-4)\cdot(\mathbf{u})+(-1)\cdot(2\mathbf{v})
\end{equation*}
by considering $(1/N)
M_{1,\,2}(\mathbb{Z})\setminus M_{1,\,2}(\mathbb{Z})$
as a free abelian group,
then we see that $\{m(\mathbf{a})\}_\mathbf{a}$ satisfy the three conditions
in Proposition \ref{modularity}.
Hence $g$ is a meromorphic modular function for $\Gamma(N)$
by Proposition \ref{modularity}. Moreover,
the infinite product expression of a Siegel function
given in (\ref{infiniteproduct}) shows that
$g$ belongs to $\mathcal{F}_N$.
Therefore we conclude by (\ref{generationray}) that $g(\tau^{}_K)$
belongs to $K_{(N)}$.
\end{proof}

For simplicity, we let
\begin{equation*}
T_N=\mathbb{Q}(E_K[N]).
\end{equation*}
Then we have the following description of the field $T_N$
in comparison with $K_{(N)}$.

\begin{theorem}\label{TN}
Assume that $K$ is different from $\mathbb{Q}(\sqrt{-1})$
and $\mathbb{Q}(\sqrt{-3})$.
\begin{enumerate}
\item[\textup{(i)}]
If $N=2$ and $d_K\equiv0\Mod{4}$, then
$T_N$ is the maximal real subfield of $K_{(2)}$
with $[K_{(2)}:T_N]=2$.
\item[\textup{(ii)}] If $N=2$ and $d_K\equiv1\Mod{4}$, then
$T_N=K_{(2)}$.
\item[\textup{(iii)}] If $N\geq3$, then
$T_N$ is the extension field of $K_{(N)}$
generated by $Y_{\left[\begin{smallmatrix}0&\frac{1}{N}\end{smallmatrix}\right]}$
with $[T_N:K_{(N)}]\leq2$.
\end{enumerate}
\end{theorem}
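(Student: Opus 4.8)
The plan is to split the analysis according to whether the $y$-coordinates of the $N$-torsion points vanish, which is governed entirely by $N$ through Lemma \ref{basicproperties} (iii).

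First I would dispose of the two cases with $N=2$. Every $\mathbf{v}\in\frac{1}{2}M_{1,2}(\mathbb{Z})\setminus M_{1,2}(\mathbb{Z})$ satisfies $2\mathbf{v}\in M_{1,2}(\mathbb{Z})$, so Lemma \ref{basicproperties} (iii) forces $Y_\mathbf{v}=0$. Hence the $y$-coordinates contribute nothing to the field and $T_2=R_2$. Statement (i) is then immediate from Lemma \ref{real} (i), while statement (ii) follows from the ``otherwise'' branch Lemma \ref{real} (ii), applicable since $d_K\equiv1\Mod{4}$, giving $R_2=K_{(2)}$ and hence $T_2=K_{(2)}$.

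For $N\geq3$ I would first record that Lemma \ref{real} (ii) yields $R_N=K_{(N)}$, so $K_{(N)}\subseteq T_N$ and therefore $T_N=K_{(N)}(Y_\mathbf{v}\mid\mathbf{v})$. The key reduction is to collapse all generators to $Y_{\left[\begin{smallmatrix}0&\frac{1}{N}\end{smallmatrix}\right]}$. Setting $\mathbf{v}_0=\left[\begin{smallmatrix}0&\frac{1}{N}\end{smallmatrix}\right]$ and observing that $2\mathbf{v}_0\notin M_{1,2}(\mathbb{Z})$ because $N\geq3$, Lemma \ref{basicproperties} (iii) gives $Y_{\mathbf{v}_0}\neq0$. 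For an arbitrary $\mathbf{u}$ there are two possibilities: either $2\mathbf{u}\in M_{1,2}(\mathbb{Z})$, in which case $Y_\mathbf{u}=0\in K_{(N)}$, or $2\mathbf{u}\notin M_{1,2}(\mathbb{Z})$, in which case Lemma \ref{Yratio} applied with denominator $\mathbf{v}_0$ gives $Y_\mathbf{u}/Y_{\mathbf{v}_0}\in K_{(N)}$, so $Y_\mathbf{u}\in K_{(N)}(Y_{\mathbf{v}_0})$. Either way $Y_\mathbf{u}\in K_{(N)}(Y_{\mathbf{v}_0})$, so $T_N=K_{(N)}(Y_{\mathbf{v}_0})$. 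To bound the degree, I would use that the point $[X_{\mathbf{v}_0}:Y_{\mathbf{v}_0}:1]$ lies on $E_K$, so by the defining equation in (\ref{reparametrization}),
\begin{equation*}
Y_{\mathbf{v}_0}^2=4X_{\mathbf{v}_0}^3-\frac{J(J-1)}{27}X_{\mathbf{v}_0}-J\left(\frac{J-1}{27}\right)^2.
\end{equation*}
Here $X_{\mathbf{v}_0}\in R_N=K_{(N)}$, while $J=\frac{1}{1728}j(\mathcal{O}_K)\in H_K\subseteq K_{(N)}$ by (\ref{generationHilbert}); hence $Y_{\mathbf{v}_0}^2\in K_{(N)}$. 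Thus $Y_{\mathbf{v}_0}$ is a root of $T^2-Y_{\mathbf{v}_0}^2$ over $K_{(N)}$, giving $[T_N:K_{(N)}]=[K_{(N)}(Y_{\mathbf{v}_0}):K_{(N)}]\leq2$, which is statement (iii).

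Since the substantive arithmetic input has already been packaged into Lemma \ref{real} and Lemma \ref{Yratio}, there is no deep obstacle here; the only mildly delicate points are ensuring that Lemma \ref{Yratio} need only be invoked with the single fixed reference vector $\mathbf{v}_0$ (the degenerate $\mathbf{u}$ with $Y_\mathbf{u}=0$ being handled separately, since its hypothesis $2\mathbf{v}\notin M_{1,2}(\mathbb{Z})$ is required of the denominator only) and confirming $J\in K_{(N)}$. Once these are in place, the argument is a clean descent of the generating set to a single square root of an element of $K_{(N)}$.
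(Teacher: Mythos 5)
Your proposal is correct and follows essentially the same route as the paper: reduce $T_2$ to $R_2$ via Lemma \ref{basicproperties} (iii) and invoke Lemma \ref{real}, then for $N\geq3$ use Lemma \ref{real} (ii) and Lemma \ref{Yratio} to collapse the generators to $Y_{\left[\begin{smallmatrix}0&\frac{1}{N}\end{smallmatrix}\right]}$ and bound the degree by the Weierstrass equation. The only cosmetic difference is that you justify $Y_{\mathbf{v}_0}^2\in K_{(N)}$ via $X_{\mathbf{v}_0}\in R_N$ and $J\in H_K$ while the paper cites Proposition \ref{generatorX} (i); both are equivalent.
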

\begin{proof}
Note by Lemma \ref{basicproperties} (iii) that
\begin{equation}\label{QE2}
T_2=\mathbb{Q}\left(X_\mathbf{v}~|~
\mathbf{v}\in\frac{1}{2}M_{1,\,2}(\mathbb{Z})\setminus M_{1,\,2}(\mathbb{Z})\right).
\end{equation}
\begin{enumerate}
\item[(i)] If $N=2$ and $d_K\equiv0\Mod{4}$, then
(\ref{QE2}) and Lemma \ref{real} (i) yield that
$T_N$ is the maximal real subfield of $K_{(2)}$ with $[K_{(2)}:T_N]=2$.
\item[(ii)] If $N=2$ and $d_K\equiv1\Mod{4}$, then (\ref{QE2})
and Lemma \ref{real} (ii) show that
$T_N=K_{(2)}$.
\item[(iii)] If $N\geq3$, then we obtain
\begin{eqnarray*}
T_N&=&K_{(N)}\left(Y_\mathbf{v}~|~\mathbf{v}\in\frac{1}{N}M_{1,\,2}(\mathbb{Z})
\setminus M_{1,\,2}(\mathbb{Z})\right)\quad\textrm{by Lemma \ref{real} (ii)}\\
&=&K_{(N)}\left(
Y_{\left[\begin{smallmatrix}0&\frac{1}{N}\end{smallmatrix}\right]}
\right)\quad\textrm{by Lemma \ref{Yratio}}.
\end{eqnarray*}
It then follows from the relation
\begin{equation*}
Y_{\left[\begin{smallmatrix}
0 & \frac{1}{N}\end{smallmatrix}\right]}^2=
4X_{\left[\begin{smallmatrix}
0 & \frac{1}{N}\end{smallmatrix}\right]}^3-
\frac{J(J-1)}{27}
X_{\left[\begin{smallmatrix}
0 & \frac{1}{N}\end{smallmatrix}\right]}
-J\left(\frac{J-1}{27}\right)^2
\end{equation*}
and Proposition \ref{generatorX} (i) that
$[T_N:K_{(N)}]\leq 2$.
\end{enumerate}
\end{proof}

\begin{remark}\label{TQj}
Since $j(E_K)=j(\mathcal{O}_K)\in\mathbb{R}$ (\cite[p. 179]{Silverman94}),
we get by (\ref{generationHilbert}), (\ref{generationray})
and Theorem \ref{TN} that
\begin{equation*}
T_N\supseteq \mathbb{Q}(j(E_K)).
\end{equation*}
\end{remark}

\section {Representations attached to elliptic curves}

By using the Shimura reciprocity law we shall
determine the image of a $2$-dimensional representation of $\mathrm{Gal}(T_N/\mathbb{Q}(j(E_K)))$ attached to $E_K$.
\par
Recall that $T_N=\mathbb{Q}(E_K[N])$
is a finite extension of the real number field $\mathbb{Q}(j(E_K))$ by
Theorem \ref{TN} and Remark \ref{TQj}.
Since
$E_K$ is defined over $\mathbb{Q}(j(E_K))$,
we get $\sigma(T_N)=T_N$
for all $\sigma\in\mathrm{Gal}(\overline{\mathbb{Q}}/\mathbb{Q}(j(E_K)))$
(\cite[pp. 53--54]{Silverman09}),
which yields that $T_N$ is Galois over $\mathbb{Q}(j(E_K))$.
Define the right action of $\mathrm{Gal}(T_N/\mathbb{Q}(j(E_K)))$ on
the $\mathbb{Z}/N\mathbb{Z}$-module $E_K[N]$
as follows $\colon$
if $\sigma\in\mathrm{Gal}(T_N/\mathbb{Q}(j(E_K)))$, then
\begin{equation*}
[X_\mathbf{0}:Y_\mathbf{0}:0]^\sigma=
[X_\mathbf{0}:Y_\mathbf{0}:0]
\end{equation*}
and
\begin{equation*}
[X_\mathbf{v}:Y_\mathbf{v}:1]^\sigma=
[X_\mathbf{v}^\sigma:Y_\mathbf{v}^\sigma:1]
\quad\left(\mathbf{v}\in\frac{1}{N}M_{1,\,2}(\mathbb{Z})\setminus
M_{1,\,2}(\mathbb{Z})\right).
\end{equation*}
From this action, we achieve the faithful representation
\begin{equation*}
\rho^{}_N~:~\mathrm{Gal}(T_N/\mathbb{Q}(j(E_K)))~\rightarrow~\mathrm{GL}_2(\mathbb{Z}/N\mathbb{Z})~(\simeq
\mathrm{Aut}(E_K[N]))
\end{equation*}
with respect to the ordered basis
\begin{equation*}
\mathcal{B}=\left\{
\left[X_{\left[\begin{smallmatrix}
\frac{1}{N} & 0\end{smallmatrix}\right]}:
Y_{\left[\begin{smallmatrix}
\frac{1}{N} & 0\end{smallmatrix}\right]}:1
\right],~
\left[X_{\left[\begin{smallmatrix}
0& \frac{1}{N}\end{smallmatrix}\right]}:
Y_{\left[\begin{smallmatrix}
0 & \frac{1}{N}\end{smallmatrix}\right]}:1
\right]
\right\}
\end{equation*}
for $E_K[N]$.
We then have
\begin{equation}\label{XsX}
[X_\mathbf{v}:Y_\mathbf{v}:1]^\sigma
=[X_{\mathbf{v}\rho^{}_N(\sigma)}:Y_{\mathbf{v}\rho^{}_N(\sigma)}:1]
\quad\left(\mathbf{v}\in\frac{1}{N}M_{1,\,2}(\mathbb{Z})\setminus
M_{1,\,2}(\mathbb{Z})\right).
\end{equation}

\begin{lemma}\label{betaI}
Let $\beta\in M_2(\mathbb{Z})$ such that $\gcd(\det(\beta),\,N)=1$.
If
\begin{equation}\label{beta}
\mathbf{v}\beta\equiv\pm\mathbf{v}\Mod{M_{1,\,2}(\mathbb{Z})}\quad
\textrm{for each}~\mathbf{v}\in\frac{1}{N}M_{1,\,2}(\mathbb{Z})\setminus M_{1,\,2}(\mathbb{Z}),
\end{equation}
then $\beta\equiv\pm I_2\Mod{NM_2(\mathbb{Z})}$.
\end{lemma}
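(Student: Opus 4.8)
The plan is to reduce everything modulo $N$ and to rephrase the hypothesis as a statement about the right action of the reduced matrix on the row vectors of $(\mathbb{Z}/N\mathbb{Z})^2$. Write $\bar\beta$ for the image of $\beta$ in $M_2(\mathbb{Z}/N\mathbb{Z})$, which lies in $\mathrm{GL}_2(\mathbb{Z}/N\mathbb{Z})$ since $\gcd(\det(\beta),\,N)=1$. Any $\mathbf{v}\in\frac{1}{N}M_{1,\,2}(\mathbb{Z})\setminus M_{1,\,2}(\mathbb{Z})$ can be written as $\mathbf{v}=\frac{1}{N}\mathbf{w}$ with $\mathbf{w}\in M_{1,\,2}(\mathbb{Z})$ not congruent to $\mathbf{0}$ modulo $N$; conversely, every nonzero row vector of $(\mathbb{Z}/N\mathbb{Z})^2$ arises as the reduction $\bar{\mathbf{w}}$ of such a $\mathbf{w}$. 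Multiplying the congruence (\ref{beta}) through by $N$ then shows that the hypothesis is equivalent to the assertion that, for every nonzero $\bar{\mathbf{w}}\in(\mathbb{Z}/N\mathbb{Z})^2$, one has $\bar{\mathbf{w}}\,\bar\beta=\pm\bar{\mathbf{w}}$, where the sign is allowed to depend on $\bar{\mathbf{w}}$.

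With this reformulation in hand, I would test the action on three specific vectors. Writing $\bar\beta=\left[\begin{smallmatrix} a & b \\ c & d\end{smallmatrix}\right]$, applying the relation to $\bar{\mathbf{w}}=\begin{bmatrix}1 & 0\end{bmatrix}$ gives $\begin{bmatrix} a & b\end{bmatrix}=\pm\begin{bmatrix}1 & 0\end{bmatrix}$, so $b=0$ and $a\in\{\pm1\}$; applying it to $\bar{\mathbf{w}}=\begin{bmatrix}0 & 1\end{bmatrix}$ gives $c=0$ and $d\in\{\pm1\}$. Thus $\bar\beta$ is forced to be diagonal with entries in $\{\pm1\}$. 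Finally, applying the relation to $\bar{\mathbf{w}}=\begin{bmatrix}1 & 1\end{bmatrix}$ gives $\begin{bmatrix}a & d\end{bmatrix}=\pm\begin{bmatrix}1 & 1\end{bmatrix}$ with a single sign, which forces $a=d$. Hence $\bar\beta=\pm I_2$, that is, $\beta\equiv\pm I_2\Mod{NM_2(\mathbb{Z})}$.

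The argument is essentially elementary linear algebra over $\mathbb{Z}/N\mathbb{Z}$, so I do not anticipate a serious obstacle. The one point demanding care is that the sign in (\ref{beta}) may vary with $\mathbf{v}$: knowing that each coordinate axis is preserved up to sign yields only a diagonal matrix with $\pm1$ entries, and it is precisely the test vector $\begin{bmatrix}1 & 1\end{bmatrix}$ that excludes the mixed-sign matrices $\left[\begin{smallmatrix}1 & 0 \\ 0 & -1\end{smallmatrix}\right]$ and $\left[\begin{smallmatrix}-1 & 0 \\ 0 & 1\end{smallmatrix}\right]$ and pins down a uniform sign. For $N=2$ the two signs coincide, and the conclusion $\bar\beta=I_2$ already follows from the first two steps.
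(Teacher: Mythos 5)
Your proof is correct and follows essentially the same route as the paper: after reducing modulo $N$, you test the hypothesis on the three vectors $\begin{bmatrix}1&0\end{bmatrix}$, $\begin{bmatrix}0&1\end{bmatrix}$ and $\begin{bmatrix}1&1\end{bmatrix}$ (i.e., $\mathbf{v}=\left[\begin{smallmatrix}\frac{1}{N}&0\end{smallmatrix}\right]$, $\left[\begin{smallmatrix}0&\frac{1}{N}\end{smallmatrix}\right]$, $\left[\begin{smallmatrix}\frac{1}{N}&\frac{1}{N}\end{smallmatrix}\right]$), exactly as the paper does, with the third vector serving to synchronize the two signs. Your added remarks on the sign possibly varying with $\mathbf{v}$ and on the degenerate case $N=2$ are accurate but do not change the argument.
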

\begin{proof}
We deduce by putting
$\mathbf{v}=\begin{bmatrix}
\frac{1}{N} & 0\end{bmatrix}$
and $\begin{bmatrix}
0 & \frac{1}{N}\end{bmatrix}$ in (\ref{beta}) that
\begin{equation*}
\beta\equiv\pm\begin{bmatrix}
1 & \phantom{\pm}0\\
0 & \pm1
\end{bmatrix}\Mod{N M_2(\mathbb{Z})}.
\end{equation*}
Then we get the assertion by letting
$\mathbf{v}=\begin{bmatrix}\frac{1}{N} &
\frac{1}{N}\end{bmatrix}$
in (\ref{beta}).
\end{proof}

Now we denote by $\mathfrak{c}$ the complex conjugation on $\mathbb{C}$.

\begin{theorem}\label{compleximage}
We have
\begin{equation*}
\rho^{}_N(\mathfrak{c}|_{T_N})=\begin{bmatrix}1&\phantom{-}b_K\\0&-1\end{bmatrix}~
\textrm{or}~-\begin{bmatrix}1&\phantom{-}b_K\\0&-1\end{bmatrix}.
\end{equation*}
\end{theorem}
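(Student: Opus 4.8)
The plan is to evaluate $\rho^{}_N$ at $\sigma:=\mathfrak{c}|_{T_N}$ directly from its action on the $x$-coordinates of $E_K[N]$. First note that $\sigma$ genuinely lies in $\mathrm{Gal}(T_N/\mathbb{Q}(j(E_K)))$: since $j(E_K)=j(\mathcal{O}_K)$ is real and $T_N/\mathbb{Q}(j(E_K))$ is Galois, complex conjugation restricts to an automorphism of $T_N$ fixing $\mathbb{Q}(j(E_K))$. Writing $\beta=\left[\begin{smallmatrix}1&\phantom{-}b_K\\0&-1\end{smallmatrix}\right]$, I record the two features of $\beta$ that drive the argument: $\det\beta=-1$ (a unit modulo $N$) and $\beta^2=I_2$, so $\beta^{-1}=\beta$. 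The goal is precisely to show $\rho^{}_N(\sigma)=\pm\beta$.

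Second, I compute the same projective point two ways. By the construction of $\rho^{}_N$, identity (\ref{XsX}) gives, for every $\mathbf{v}\in\frac{1}{N}M_{1,\,2}(\mathbb{Z})\setminus M_{1,\,2}(\mathbb{Z})$, the equality $[X_\mathbf{v}:Y_\mathbf{v}:1]^{\sigma}=[X_{\mathbf{v}\rho^{}_N(\sigma)}:Y_{\mathbf{v}\rho^{}_N(\sigma)}:1]$. On the other hand, since $\sigma$ is complex conjugation on $T_N$ and $X_\mathbf{v},\,Y_\mathbf{v}\in T_N$, this automorphism acts coordinatewise, so the same point equals $[\overline{X_\mathbf{v}}:\overline{Y_\mathbf{v}}:1]$. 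Now I use the description of $\overline{X_\mathbf{v}}$: by (\ref{Xf}) we have $X_\mathbf{v}=-\tfrac{1}{2^73^5}f_\mathbf{v}(\tau^{}_K)$ with a real scalar, and $\{f_\mathbf{v}\}$ is a Fricke family, so Lemma \ref{K-Lconjugation} yields $\overline{X_\mathbf{v}}=X_{\mathbf{v}\beta}$. Comparing $x$-coordinates of the two expressions for the conjugated point gives $X_{\mathbf{v}\rho^{}_N(\sigma)}=X_{\mathbf{v}\beta}$ for all such $\mathbf{v}$.

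Finally, I turn this into a matrix identity. By Lemma \ref{basicproperties}(i), the equality $X_{\mathbf{v}\rho^{}_N(\sigma)}=X_{\mathbf{v}\beta}$ forces $\mathbf{v}\rho^{}_N(\sigma)\equiv\pm\mathbf{v}\beta\Mod{M_{1,\,2}(\mathbb{Z})}$, where the sign may a priori depend on $\mathbf{v}$. Right-multiplying by $\beta=\beta^{-1}$ (an integer matrix, hence preserving $M_{1,\,2}(\mathbb{Z})$) gives $\mathbf{v}\bigl(\rho^{}_N(\sigma)\beta\bigr)\equiv\pm\mathbf{v}\Mod{M_{1,\,2}(\mathbb{Z})}$ for every $\mathbf{v}$. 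Choosing an integer lift $M\in M_2(\mathbb{Z})$ of $\rho^{}_N(\sigma)$ with $\gcd(\det M,\,N)=1$, the matrix $M\beta\in M_2(\mathbb{Z})$ has determinant coprime to $N$ and satisfies hypothesis (\ref{beta}); Lemma \ref{betaI} then forces $M\beta\equiv\pm I_2\Mod{NM_2(\mathbb{Z})}$, i.e. $\rho^{}_N(\sigma)\beta=\pm I_2$ in $\mathrm{GL}_2(\mathbb{Z}/N\mathbb{Z})$, whence $\rho^{}_N(\sigma)=\pm\beta$. The only delicate point is the per-$\mathbf{v}$ sign ambiguity produced by Lemma \ref{basicproperties}(i); this is exactly what Lemma \ref{betaI} is engineered to absorb, so no global sign has to be tracked and the $y$-coordinates are never needed.
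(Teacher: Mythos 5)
Your proposal is correct and follows essentially the same route as the paper: compute $\overline{X_\mathbf{v}}=X_{\mathbf{v}\beta}$ via Lemma \ref{K-Lconjugation}, compare with $X_{\mathbf{v}\rho^{}_N(\sigma)}$ from (\ref{XsX}), pass to the congruence $\mathbf{v}\rho^{}_N(\sigma)\equiv\pm\mathbf{v}\beta\Mod{M_{1,\,2}(\mathbb{Z})}$ by Lemma \ref{basicproperties}(i), and conclude with Lemma \ref{betaI}. Your explicit right-multiplication by $\beta=\beta^{-1}$ before invoking Lemma \ref{betaI} is a slightly more careful rendering of a step the paper leaves implicit, but it is the same argument.
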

\begin{proof}
Let $\alpha=\rho^{}_N(\mathfrak{c}|_{T_N})$ ($\in\mathrm{GL}_2(\mathbb{Z}/N\mathbb{Z})$). We find
by (\ref{XsX}) and Lemma \ref{K-Lconjugation} that
\begin{equation*}
X_{\mathbf{v}\alpha}=\overline{X}_\mathbf{v}
=X_{\mathbf{v}\left[\begin{smallmatrix}1&\phantom{-}b_K\\0&-1\end{smallmatrix}\right]}
\quad\left(\mathbf{v}\in\frac{1}{N}M_{1,\,2}(\mathbb{Z})
\setminus M_{1,\,2}(\mathbb{Z})\right).
\end{equation*}
And we obtain by Lemma \ref{basicproperties} (i) that
\begin{equation*}
\mathbf{v}\alpha\equiv\pm\mathbf{v}\begin{bmatrix}1&\phantom{-}b_K\\
0&-1\end{bmatrix}
\Mod{M_{1,\,2}(\mathbb{Z})}\quad\textrm{for each}~\mathbf{v}\in\frac{1}{N}M_{1,\,2}(\mathbb{Z})
\setminus M_{1,\,2}(\mathbb{Z}).
\end{equation*}
Thus, it follows from Lemma \ref{betaI} that
\begin{equation*}
\alpha=
\rho^{}_N(\mathfrak{c}|_{T_N})=\begin{bmatrix}1&\phantom{-}b_K\\
0&-1\end{bmatrix}~\textrm{or}~
-\begin{bmatrix}1&\phantom{-}b_K\\
0&-1\end{bmatrix}.
\end{equation*}
\end{proof}

Let
\begin{equation*}
\widehat{W}_{K,\,N}=\left\langle W_{K,\,N},\,\begin{bmatrix}1&\phantom{-}b_K\\
0&-1\end{bmatrix}\right\rangle
\end{equation*}
as a subgroup of $\mathrm{GL}_2(\mathbb{Z}/N\mathbb{Z})$. We then see that
\begin{equation}\label{Windex}
[\widehat{W}_{K,\,N}:W_{K,\,N}]=\left\{
\begin{array}{ll}
1 & \textrm{if}~N=2~\textrm{and}~d_K\equiv0\Mod{4},\\
2 & \textrm{otherwise}.
\end{array}\right.
\end{equation}

\begin{theorem}\label{subgroup}
The image of $\rho^{}_N$ is a subgroup of $\widehat{W}_{K,\,N}$ of index
\begin{equation*}
[\widehat{W}_{K,\,N}:\rho^{}_N(\mathrm{Gal}(T_N/\mathbb{Q}(j(E_K))))]=
\left\{\begin{array}{ll}
1 & \textrm{if}~N=2,\\
1~\textrm{or}~2& \textrm{if}~N\geq3.
\end{array}\right.
\end{equation*}
\end{theorem}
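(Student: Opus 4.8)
The plan is to establish the inclusion $\rho_N(\mathrm{Gal}(T_N/\mathbb{Q}(j(E_K))))\subseteq\widehat{W}_{K,\,N}$ first, and then to pin down the index by a pure order count using faithfulness of $\rho_N$.

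For the inclusion it is cleanest to pull $\rho_N$ back along the canonical surjection $\mathrm{Gal}(\overline{\mathbb{Q}}/\mathbb{Q}(j(E_K)))\twoheadrightarrow\mathrm{Gal}(T_N/\mathbb{Q}(j(E_K)))$ to a representation $\overline{\rho}_N$ with the same image; this sidesteps whether $K_{(N)}\subseteq T_N$, which fails in case (i) of Theorem \ref{TN}. Since $j(E_K)=j(\mathcal{O}_K)$ is real and $H_K=K(j(\mathcal{O}_K))$, one has $[H_K:\mathbb{Q}(j(E_K))]=2$, and I would split $\mathrm{Gal}(\overline{\mathbb{Q}}/\mathbb{Q}(j(E_K)))$ into the index-two subgroup $\mathrm{Gal}(\overline{\mathbb{Q}}/H_K)$ together with the coset $\mathfrak{c}\cdot\mathrm{Gal}(\overline{\mathbb{Q}}/H_K)$, where $\mathfrak{c}$ is complex conjugation. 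For $\sigma$ fixing $H_K$, its restriction to $K_{(N)}$ equals $\psi_N(\gamma)$ for some $\gamma\in W_{K,\,N}$ by Proposition \ref{reciprocity}; comparing the analogue of (\ref{XsX}), namely $X_\mathbf{v}^\sigma=X_{\mathbf{v}\overline{\rho}_N(\sigma)}$, with $X_\mathbf{v}^{\psi_N(\gamma)}=X_{\mathbf{v}\gamma}$ from Proposition \ref{generatorX}(ii) — recalling that every $X_\mathbf{v}$ lies in $K_{(N)}$ — and then invoking Lemma \ref{basicproperties}(i) followed by Lemma \ref{betaI} forces $\overline{\rho}_N(\sigma)=\pm\gamma\in W_{K,\,N}$ (note $-I_2\in W_{K,\,N}$). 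For the remaining coset I would use $\overline{\rho}_N(\mathfrak{c})=\rho_N(\mathfrak{c}|_{T_N})=\pm\left[\begin{smallmatrix}1&b_K\\0&-1\end{smallmatrix}\right]$ from Theorem \ref{compleximage}, so that every image matrix lies in $\langle W_{K,\,N},\,\left[\begin{smallmatrix}1&b_K\\0&-1\end{smallmatrix}\right]\rangle=\widehat{W}_{K,\,N}$.

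For the index, faithfulness of $\rho_N$ gives $|\rho_N(\mathrm{Gal}(T_N/\mathbb{Q}(j(E_K))))|=[T_N:\mathbb{Q}(j(E_K))]$, so the index equals $|\widehat{W}_{K,\,N}|/[T_N:\mathbb{Q}(j(E_K))]$. I would read off $|\widehat{W}_{K,\,N}|=[\widehat{W}_{K,\,N}:W_{K,\,N}]\cdot|W_{K,\,N}|$ from (\ref{Windex}), and compute $[T_N:\mathbb{Q}(j(E_K))]$ through the tower $\mathbb{Q}(j(E_K))\subseteq H_K\subseteq K_{(N)}$ together with Theorem \ref{TN}. The crucial input is Proposition \ref{reciprocity}: the kernel $r_N(U_K)=\{\pm I_2\bmod N\}$ of $\psi_N$ has order $2$ when $N\geq3$ but order $1$ when $N=2$, whence $[K_{(N)}:H_K]=|W_{K,\,N}|/2$ for $N\geq3$ and $[K_{(N)}:H_K]=|W_{K,\,N}|$ for $N=2$.

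Carrying this out case by case: for $N\geq3$ one gets $[T_N:\mathbb{Q}(j(E_K))]=[T_N:K_{(N)}]\cdot|W_{K,\,N}|$ and $|\widehat{W}_{K,\,N}|=2|W_{K,\,N}|$, so the index is $2/[T_N:K_{(N)}]\in\{1,2\}$ by Theorem \ref{TN}(iii); for $N=2$ with $d_K\equiv1\Mod{4}$ both quantities equal $2|W_{K,\,2}|$; and for $N=2$ with $d_K\equiv0\Mod{4}$ both equal $|W_{K,\,2}|$, using $[K_{(2)}:T_2]=2$ from Theorem \ref{TN}(i) and $\widehat{W}_{K,\,2}=W_{K,\,2}$ from (\ref{Windex}). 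Hence the index is $1$ in both $N=2$ cases. I expect the only real obstacle to be the bookkeeping of the $N=2$ versus $N\geq3$ discrepancy in $|r_N(U_K)|$, since it is precisely this collapse of $\{\pm I_2\}$ modulo $2$ that forces index $1$ when $N=2$ while leaving room for index $2$ when $N\geq3$.
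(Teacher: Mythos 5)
Your proposal is correct and follows essentially the same route as the paper: the inclusion $\rho^{}_N(\mathrm{Gal}(T_N/\mathbb{Q}(j(E_K))))\subseteq\widehat{W}_{K,\,N}$ is obtained from the action on the $X_\mathbf{v}$ via Lemma \ref{basicproperties} (i), Lemma \ref{betaI}, Propositions \ref{reciprocity} and \ref{generatorX} (ii) and Theorem \ref{compleximage}, and the index is then pinned down by the same order count through the tower $\mathbb{Q}(j(E_K))\subseteq H_K\subseteq K_{(N)}$ using Theorem \ref{TN} together with the $N=2$ versus $N\geq3$ behaviour of $r^{}_N(U_K)$. Your explicit splitting into $\mathrm{Gal}(\overline{\mathbb{Q}}/H_K)$ and its complex-conjugation coset is just a more carefully written version of the step the paper compresses into a single sentence.
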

\begin{proof}
Let $\sigma\in\mathrm{Gal}(T_N/\mathbb{Q}(j(E_K)))$. If we let
$\gamma=\rho^{}_N(\sigma)$, then we derive by (\ref{XsX}) that
\begin{equation*}
X_\mathbf{v}^\sigma=X_{\mathbf{v}\gamma}
\quad\left(\mathbf{v}\in\frac{1}{N}M_{1,\,2}(\mathbb{Z})
\setminus M_{1,\,2}(\mathbb{Z})\right).
\end{equation*}
On the other hand,
we achieve by Propositions \ref{reciprocity}, \ref{generatorX} (ii)
and Theorem \ref{compleximage} that
\begin{equation*}
X_\mathbf{v}^\sigma=X_{\mathbf{v}\alpha}\quad
\textrm{for some}~\alpha\in\widehat{W}_{K,\,N}
\quad\left(\mathbf{v}\in\frac{1}{N}M_{1,\,2}(\mathbb{Z})
\setminus M_{1,\,2}(\mathbb{Z})\right),
\end{equation*}
and hence
\begin{equation*}
X_{\mathbf{v}\gamma}=X_{\mathbf{v}\alpha}.
\end{equation*}
Therefore we attain by Lemma \ref{basicproperties} (i) that
\begin{equation*}
\mathbf{v}\gamma\equiv\pm\mathbf{v}\alpha\Mod{M_{1,\,2}(\mathbb{Z})}\quad
\textrm{for each}~\mathbf{v}\in\frac{1}{N}M_{1,\,2}(\mathbb{Z})\setminus M_{1,\,2}(\mathbb{Z}),
\end{equation*}
from which we have by Lemma \ref{betaI} that
\begin{equation*}
\gamma=\pm\alpha\quad\textrm{in}~\mathrm{GL}_2(\mathbb{Z}/N\mathbb{Z}).
\end{equation*}
This yields that
$\rho^{}_N(\mathrm{Gal}(T_N/\mathbb{Q}(j(E_K))))$ is a subgroup of $\widehat{W}_{K,\,N}$.
Furthermore, we find that
\begin{eqnarray*}
&&|\rho^{}_N(\mathrm{Gal}(T_N/\mathbb{Q}(j(E_K))))|\\&=&
|\mathrm{Gal}(T_N/\mathbb{Q}(j(E_K)))|\quad\textrm{because $\rho^{}_N$ is injective}\\
&=&\left\{\begin{array}{ll}
|\mathrm{Gal}(K_{(2)}/H_K)|& \textrm{if}~N=2~\textrm{and}~d_K\equiv0\Mod{4},\\
2|\mathrm{Gal}(K_{(2)}/H_K)| & \textrm{if}~N=2~\textrm{and}~d_K\equiv1\Mod{4},\\
2|\mathrm{Gal}(K_{(N)}/H_K)| ~\textrm{or}~
4|\mathrm{Gal}(K_{(N)}/H_K)|& \textrm{if}~N\geq3
\end{array}\right.\\
&&\hspace{10.5cm}\textrm{by (\ref{generationHilbert}) and Theorem \ref{TN}}\\
&=&
|W_{K,\,N}/\{\pm I_2\}|\times
\left\{\begin{array}{ll}
1& \textrm{if}~N=2~\textrm{and}~d_K\equiv0\Mod{4},\\
2 & \textrm{if}~N=2~\textrm{and}~d_K\equiv1\Mod{4},\\
2~\textrm{or}~4& \textrm{if}~N\geq3
\end{array}\right.\\
&&\hspace{10.5cm}\textrm{by Proposition \ref{reciprocity}}\\
&=&
|W_{K,\,N}|\times
\left\{\begin{array}{ll}
1& \textrm{if}~N=2~\textrm{and}~d_K\equiv0\Mod{4},\\
2 & \textrm{if}~N=2~\textrm{and}~d_K\equiv1\Mod{4},\\
1~\textrm{or}~2& \textrm{if}~N\geq3\\
\end{array}\right.\\
&=&|\widehat{W}_{K,\,N}|\times
\left\{\begin{array}{ll}
1& \textrm{if}~N=2,\\
\frac{1}{2}~\textrm{or}~1& \textrm{if}~N\geq3
\end{array}\right.
\quad\textrm{by (\ref{Windex})}.
\end{eqnarray*}
This completes the proof.
\end{proof}

\begin{remark}\label{not-I2}
Consider the case where $[\widehat{W}_{K,\,N}:\rho^{}_N(\mathrm{Gal}(T_N/\mathbb{Q}(j(E_K))))]=2$, equivalently,
\begin{equation*}
N\geq3\quad\textrm{and}\quad Y_{\left[\begin{smallmatrix}0&\frac{1}{N}
\end{smallmatrix}\right]}\in K_{(N)}.
\end{equation*}
If $\rho^{}_N(\sigma)=-I_2$ for some $\sigma\in
\mathrm{Gal}(T_N/\mathbb{Q}(j(E_K)))$, then we get
by (\ref{XsX}),
Lemma \ref{basicproperties} (i) and (ii) that
\begin{equation*}
X_{\mathbf{v}}^\sigma=
X_{\mathbf{v}(-I_2)}=
X_{\mathbf{v}}
\quad\textrm{for all}~\mathbf{v}\in\frac{1}{N}
M_{1,\,2}(\mathbb{Z})\setminus M_{1,\,2}(\mathbb{Z})
\end{equation*}
and
\begin{equation*}
Y_{\left[\begin{smallmatrix}
0&\frac{1}{N}
\end{smallmatrix}\right]}^\sigma=
Y_{\left[\begin{smallmatrix}
0&\frac{1}{N}
\end{smallmatrix}\right](-I_2)}=
-Y_{\left[\begin{smallmatrix}
0&\frac{1}{N}
\end{smallmatrix}\right]}.
\end{equation*}
This implies by Lemma \ref{real} (ii)
that $Y_{\left[\begin{smallmatrix}
0&\frac{1}{N}
\end{smallmatrix}\right]}$
does not belong to $K_{(N)}$,
which gives a contradiction.
Therefore we must have
\begin{equation*}
\rho^{}_N(\mathrm{Gal}(T_N/\mathbb{Q}(j(E_K))))\not\ni -I_2\quad
\textrm{and}\quad
\langle
\rho^{}_N(\mathrm{Gal}(T_N/\mathbb{Q}(j(E_K)))),\,-I_2\rangle
=\widehat{W}_{K,\,N}.
\end{equation*}
\end{remark}

\section {Some $p$-adic Galois representations}\label{Sect8}

Let $p$ be a prime. We shall investigate the image of a $p$-adic Galois representation of $\mathrm{Gal}(\overline{\mathbb{Q}}/\mathbb{Q}(j(E_K)))$
induced from its right action on the $p$-adic Tate module of the elliptic curve $E_K$ (Theorem \ref{pre2}).

\begin{lemma}\label{directproduct}
There is a positive integer $k$ such that
\begin{equation*}
\rho^{}_{p^n}(\mathrm{Gal}(T_{p^n}/\mathbb{Q}(j(E_K))))\cap\{-I_2\}=
\left\{\begin{array}{cl}
\{-I_2\} & \vspace{-0.2cm}\textrm{for all}~n\geq k\\
&\vspace{-0.2cm}~~~~~~~~~~~~~~~~~\textrm{or}\\
\emptyset & \textrm{for all}~n\geq k.
\end{array}\right.
\end{equation*}
\end{lemma}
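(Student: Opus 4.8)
The plan is to reduce the assertion to a monotonicity property of the single Boolean quantity ``$-I_2$ lies in the image of $\rho^{}_{p^n}$'' and then to invoke the elementary fact that a non-increasing Boolean sequence is eventually constant. First I would note that it suffices to treat indices $n$ with $p^n\geq3$, since the conclusion concerns only large $n$. For such $n$, Theorem \ref{subgroup} together with Remark \ref{not-I2} shows that $\rho^{}_{p^n}(\mathrm{Gal}(T_{p^n}/\mathbb{Q}(j(E_K))))$ is either all of $\widehat{W}_{K,\,p^n}$ (index $1$) or an index-$2$ subgroup not containing $-I_2$. Since $-I_2=\left[\begin{smallmatrix}-1&0\\0&-1\end{smallmatrix}\right]$ belongs to $W_{K,\,p^n}\subseteq\widehat{W}_{K,\,p^n}$ (take $s=0,\,t=-1$ in (\ref{Cartan})), in the first case $-I_2$ lies in the image and in the second it does not. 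Hence at each such level the intersection in the statement equals exactly $\{-I_2\}$ or $\emptyset$, and the task is to show that which alternative occurs is independent of $n$ once $n$ is large.

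Second, I would establish the compatibility of the representations $\rho^{}_{p^n}$ under reduction modulo $p^n$. The multiplication-by-$p$ map on $E_K$ carries $E_K[p^{n+1}]$ onto $E_K[p^n]$, is equivariant for the $\mathrm{Gal}(\overline{\mathbb{Q}}/\mathbb{Q}(j(E_K)))$-action since $E_K$ is defined over $\mathbb{Q}(j(E_K))$, and sends the ordered basis $\mathcal{B}$ for $E_K[p^{n+1}]$ onto the one for $E_K[p^n]$, because $p\cdot\frac{1}{p^{n+1}}\tau^{}_K$ and $p\cdot\frac{1}{p^{n+1}}$ are congruent to $\frac{1}{p^n}\tau^{}_K$ and $\frac{1}{p^n}$ modulo $\mathcal{O}_K$, respectively. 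Consequently, for every $\sigma\in\mathrm{Gal}(T_{p^{n+1}}/\mathbb{Q}(j(E_K)))$ one has
\begin{equation*}
\rho^{}_{p^{n+1}}(\sigma)\equiv\rho^{}_{p^n}(\sigma|_{T_{p^n}})\Mod{p^n M_2(\mathbb{Z})},
\end{equation*}
which I would deduce from (\ref{XsX}) together with Lemma \ref{basicproperties} (i) and Lemma \ref{betaI}, the latter two being used to pin down the reduction exactly (and not merely up to sign). In particular, reduction modulo $p^n$ sends $\rho^{}_{p^{n+1}}(\mathrm{Gal}(T_{p^{n+1}}/\mathbb{Q}(j(E_K))))$ into $\rho^{}_{p^n}(\mathrm{Gal}(T_{p^n}/\mathbb{Q}(j(E_K))))$.

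Finally, since $-I_2\in\mathrm{GL}_2(\mathbb{Z}/p^{n+1}\mathbb{Z})$ reduces to $-I_2\in\mathrm{GL}_2(\mathbb{Z}/p^n\mathbb{Z})$, the displayed compatibility forces the implication: if $-I_2\in\rho^{}_{p^{n+1}}(\mathrm{Gal}(T_{p^{n+1}}/\mathbb{Q}(j(E_K))))$, then $-I_2\in\rho^{}_{p^n}(\mathrm{Gal}(T_{p^n}/\mathbb{Q}(j(E_K))))$. Thus the Boolean value ``$-I_2$ lies in the image of $\rho^{}_{p^n}$'' is non-increasing in $n$, hence constant for all $n\geq k$ for some positive integer $k$; enlarging $k$ if necessary so that $p^k\geq3$, the constant value $\mathrm{true}$ gives the intersection $\{-I_2\}$ for all $n\geq k$ and the constant value $\mathrm{false}$ gives $\emptyset$ for all $n\geq k$. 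I expect the only point requiring real care to be the sign-free identification in the compatibility step, namely verifying that the reduction of $\rho^{}_{p^{n+1}}(\sigma)$ is \emph{equal} to $\rho^{}_{p^n}(\sigma|_{T_{p^n}})$ rather than merely $\pm$ it; this is precisely where the $Y$-coordinates of the torsion basis, and not only the $X$-coordinates, must be invoked.
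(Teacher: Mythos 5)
Your proposal is correct and follows essentially the same route as the paper: the paper's proof is exactly the observation that the commutative diagram relating $\rho^{}_{p^\ell}$ and $\rho^{}_{p^k}$ (restriction on Galois groups, reduction on matrix groups) forces the condition ``$-I_2$ lies in the image'' to propagate downward, so that once it fails at one level it fails at all higher levels. The only difference is that you supply a justification of that commutative diagram (via multiplication by $p$ on torsion and the sign-pinning through the $Y$-coordinates), which the paper simply asserts, and you include a first paragraph on Theorem \ref{subgroup} and Remark \ref{not-I2} that is harmless but not needed, since the intersection with $\{-I_2\}$ is tautologically either $\{-I_2\}$ or $\emptyset$.
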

\begin{proof}
If there is a positive integer $k$ such that
$\rho^{}_{p^n}(\mathrm{Gal}(T_{p^n}/\mathbb{Q}(j(E_K))))\cap\{-I_2\}=\{-I_2\}$
for all $n\geq k$, then
we are done.
\par
Now, consider the case where
$\rho^{}_{p^k}(\mathrm{Gal}(T_{p^k}/\mathbb{Q}(j(E_K))))\cap\{-I_2\}=\emptyset$
for some positive integer $k$. If
$\rho^{}_{p^\ell}(\mathrm{Gal}(T_{p^\ell}/\mathbb{Q}(j(E_K))))$ contains $-I_2$
for some $\ell>k$, then
the commutative diagram in Figure \ref{commutative}
asserts that
$\rho^{}_{p^k}(\mathrm{Gal}(T_{p^k}/\mathbb{Q}(j(E_K))))$ also contains $-I_2$,
which gives rise to a contradiction.
Therefore we conclude
\begin{equation*}
\rho^{}_{p^n}(\mathrm{Gal}(T_{p^n}/\mathbb{Q}(j(E_K))))\cap\{-I_2\}=\emptyset\quad
\textrm{for all}~n\geq k.
\end{equation*}

\begin{figure}[h]
\begin{equation*}
\xymatrixcolsep{2pc}\xymatrix{
\mathrm{Gal}(T_{p^\ell}/\mathbb{Q}(j(E_K))) \ar[r]^{~~~~\rho^{}_{p^\ell}}
\ar[d]^{}_{\textrm{restriction}} & \mathrm{GL}_2(\mathbb{Z}/p^\ell\mathbb{Z})
\ar[d]^{\textrm{reduction}}\\
\mathrm{Gal}(T_{p^k}/\mathbb{Q}(j(E_K))) \ar[r]^{~~~~\rho^{}_{p^k}} & \mathrm{GL}_2(\mathbb{Z}/p^k\mathbb{Z})
}
\end{equation*}
\caption{A commutative diagram of representations}\label{commutative}
\end{figure}
\end{proof}

\begin{lemma}\label{-I2}
Let $p$ be an odd prime for which it does not split in $K$ and
$[K_{(p)}:H_K]$ is even.
Then, $\rho^{}_{p^n}(\mathrm{Gal}(T_{p^n}/\mathbb{Q}(j(E_K))))$ contains
$-I_2$ for every $n\geq1$.
\end{lemma}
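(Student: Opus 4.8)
The plan is to argue by contradiction and reduce the statement to a single divisibility property of the unit group $(\mathcal{O}_K/p^n\mathcal{O}_K)^\times$. Since $p$ is odd we have $p^n\geq3$, so Theorem \ref{TN}(iii) applies. Suppose for some $n$ that $-I_2\notin\rho^{}_{p^n}(\mathrm{Gal}(T_{p^n}/\mathbb{Q}(j(E_K))))$. If $\sigma$ were a nontrivial element of $\mathrm{Gal}(T_{p^n}/K_{(p^n)})$, then $\sigma$ would fix every $X_\mathbf{v}$, so by (\ref{XsX}) we would have $X_{\mathbf{v}\rho^{}_{p^n}(\sigma)}=X_\mathbf{v}$; Lemma \ref{betaI} together with the injectivity of $\rho^{}_{p^n}$ and $p^n\geq3$ would then force $\rho^{}_{p^n}(\sigma)=-I_2$, against the assumption. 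Hence $\mathrm{Gal}(T_{p^n}/K_{(p^n)})$ is trivial, so by Theorem \ref{TN}(iii) we get $T_{p^n}=K_{(p^n)}$ and in particular $Y_{\left[\begin{smallmatrix}0&\frac{1}{p^n}\end{smallmatrix}\right]}\in K_{(p^n)}$.

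Next I would exploit $T_{p^n}=K_{(p^n)}$ to produce a complement of $\{\pm I_2\}$ in $W_{K,\,p^n}$. Restrict $\rho^{}_{p^n}$ to $\mathrm{Gal}(T_{p^n}/H_K)=\mathrm{Gal}(K_{(p^n)}/H_K)$ and call the image $G_0$. For such $\sigma$, Proposition \ref{reciprocity} gives $\sigma=\psi^{}_{p^n}(\gamma)$ with $\gamma\in W_{K,\,p^n}$; comparing $X_\mathbf{v}^\sigma=X_{\mathbf{v}\gamma}$ from Proposition \ref{generatorX}(ii) with $X_\mathbf{v}^\sigma=X_{\mathbf{v}\rho^{}_{p^n}(\sigma)}$ from (\ref{XsX}) and invoking Lemmas \ref{basicproperties}(i) and \ref{betaI} yields $\rho^{}_{p^n}(\sigma)=\pm\gamma\in W_{K,\,p^n}$, so $G_0\subseteq W_{K,\,p^n}$. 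Moreover $-I_2\notin G_0$: if $-I_2=\rho^{}_{p^n}(\tau)$ with $\tau\in\mathrm{Gal}(K_{(p^n)}/H_K)$, then $\tau$ fixes $H_K$ and every $X_\mathbf{v}$, hence fixes $K_{(p^n)}=T_{p^n}$, forcing $\tau=1$ and $-I_2=I_2$, which is absurd. Since $\rho^{}_{p^n}$ is injective, $|G_0|=[K_{(p^n)}:H_K]=|W_{K,\,p^n}|/2$ by Proposition \ref{reciprocity} (as $r^{}_{p^n}(U_K)=\{\pm I_2\}$), so the abelian group $W_{K,\,p^n}$ is the internal direct product $\{\pm I_2\}\times G_0$. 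Under the identification $W_{K,\,p^n}\cong(\mathcal{O}_K/p^n\mathcal{O}_K)^\times$ carrying $-I_2$ to $-1$, the existence of this complement is equivalent to $-1$ not being a square in $(\mathcal{O}_K/p^n\mathcal{O}_K)^\times$.

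The final step, which I expect to be the main point, is to deduce from the hypotheses that $-1$ \emph{is} a square in $(\mathcal{O}_K/p^n\mathcal{O}_K)^\times$ for every $n$, contradicting the previous paragraph. Because $p$ is odd and does not split in $K$, the ring $\mathcal{O}_K/p^n\mathcal{O}_K$ is local with residue field $\mathbb{F}_{p^2}$ (if $p$ is inert) or $\mathbb{F}_p$ (if $p$ is ramified), and its unit group splits as $C\times P_n$ with $C$ the cyclic unit group of the residue field and $P_n$ a $p$-group of odd order; thus $|C|=p^2-1$ or $p-1$ is independent of $n$ and carries the full $2$-part of $|(\mathcal{O}_K/p^n\mathcal{O}_K)^\times|$. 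Since $C$ is cyclic, its unique involution $-1$ is a square if and only if $4\mid|C|$, and this single condition governs solvability of $x^2=-1$ in $(\mathcal{O}_K/p^n\mathcal{O}_K)^\times$ for all $n$ simultaneously. Finally $[K_{(p)}:H_K]=|(\mathcal{O}_K/p\mathcal{O}_K)^\times|/2$, whose parity is even exactly when $4\mid|(\mathcal{O}_K/p\mathcal{O}_K)^\times|$, i.e. when $4\mid|C|$; hence the assumption that $[K_{(p)}:H_K]$ is even yields that $-1$ is a square in $(\mathcal{O}_K/p^n\mathcal{O}_K)^\times$ for all $n$, the desired contradiction. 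The delicate points are precisely that non-split behaviour is what makes the $2$-Sylow subgroup cyclic, so that a single divisibility by $4$ controls everything, and that this divisibility is insensitive to passing from $p$ to $p^n$; the split case is genuinely excluded, since there $(\mathcal{O}_K/p\mathcal{O}_K)^\times\cong\mathbb{F}_p^\times\times\mathbb{F}_p^\times$ has non-cyclic $2$-Sylow subgroup.
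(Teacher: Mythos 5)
Your proof is correct, but it takes a genuinely different route from the paper's. The paper argues directly: since $[T_{p^n}:K_{(p)}][K_{(p)}:H_K]$ is even, Cauchy's theorem provides an element $\gamma$ of order $2$ in $\rho^{}_{p^n}(\mathrm{Gal}(T_{p^n}/H_K))\subseteq W_{K,\,p^n}$, and the Cayley--Hamilton identity $(a+d)\gamma=(1+ad-bc)I_2$ shows that either $\gamma=-I_2$ or (writing $\gamma$ in Cartan form) $d_Ks^2\equiv4\Mod{p}$, which would force $p$ to split --- so the non-split hypothesis leaves only $\gamma=-I_2$. You instead argue by contradiction through the structure of the local ring $\mathcal{O}_K/p^n\mathcal{O}_K$: if $-I_2$ were absent, then $G_0=\rho^{}_{p^n}(\mathrm{Gal}(K_{(p^n)}/H_K))$ would be an index-$2$ complement of $\{\pm I_2\}$ in $W_{K,\,p^n}\cong(\mathcal{O}_K/p^n\mathcal{O}_K)^\times$, forcing $-1$ to be a non-square there, whereas the Teichm\"uller decomposition $C\times P_n$ (with $C$ cyclic because $p$ does not split) together with the evenness of $[K_{(p)}:H_K]$ makes $-1$ a square. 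Both arguments ultimately rest on the same structural fact --- in the non-split case the $2$-Sylow subgroup of the Cartan group is cyclic, so $-I_2$ is its only involution --- but the paper's version is a short self-contained matrix computation, while yours is more conceptual and makes visible exactly why the split case (non-cyclic $2$-Sylow subgroup) must be excluded; the price is that you must import the standard unit-group decomposition of a finite local ring, which the paper never needs. Your intermediate steps (that $\mathrm{Gal}(T_{p^n}/K_{(p^n)})$ nontrivial would force $-I_2$ into the image via Lemma \ref{betaI}, and that $\rho^{}_{p^n}(\mathrm{Gal}(T_{p^n}/H_K))\subseteq W_{K,\,p^n}$) are sound and essentially reproduce the proof of Theorem \ref{subgroup} and Remark \ref{not-I2}, which the paper also invokes at the corresponding point.
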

\begin{proof}
First, note by Theorem \ref{TN} (iii) that $T_{p^n}$ contains $K_{(p^n)}$.
Since $\rho^{}_{p^n}$ is injective and
\begin{equation*}
|\mathrm{Gal}(T_{p^n}/H_K)|=
[T_{p^n}:K_{(p)}][K_{(p)}:H_K]
\end{equation*}
is even by hypothesis, the subgroup $\rho^{}_{p^n}(\mathrm{Gal}(T_{p^n}/H_K))$
of $\rho^{}_{p^n}(\mathrm{Gal}(T_{p^n}/\mathbb{Q}(j(E_K))))$
contains an element
$\gamma=\begin{bmatrix}a&b\\c&d\end{bmatrix}\in
\mathrm{GL}_2(\mathbb{Z}/p^n\mathbb{Z})$ of order $2$.
Here we observe that in $M_2(\mathbb{Z}/p^n\mathbb{Z})$,
$\gamma$ satisfies
\begin{equation*}
\gamma^2-(a+b)\gamma+(ad-bc)I_2=O_2,\quad
\gamma\neq I_2\quad\textrm{and}\quad\gamma^2=I_2,
\end{equation*}
and so
\begin{equation}\label{a+d}
(a+d)\gamma=(1+ad-bc)I_2.
\end{equation}
Now, there are two possibilities $\colon$ $a+d\not\equiv
0\Mod{p}$ or $a+d\equiv0\Mod{p}$.
\begin{enumerate}
\item[Case 1.] If $a+d\not\equiv0\Mod{p}$, then one can readily get
$\gamma=-I_2$.
\item[Case 2.] If $a+d\equiv0\Mod{p}$, then by (\ref{a+d}) we attain
\begin{equation}\label{a+d'}
1-d^2-bc\equiv0\Mod{p}.
\end{equation}
On the other hand, since
$\rho^{}_{p^n}(\mathrm{Gal}(T_{p^n}/H_K))$
is a subgroup of $W_{K,\,p^n}$ by Proposition \ref{reciprocity}
and the proof of Theorem \ref{subgroup}, we have
\begin{equation*}
\gamma=\begin{bmatrix}
t-b_Ks & -c_Ks\\
s & t
\end{bmatrix}
\quad\textrm{for some}~s,\,t\in\mathbb{Z}/p^n\mathbb{Z}.
\end{equation*}
Thus we establish by the assumption $a+d\equiv0\Mod{p}$ and (\ref{a+d'}) that
\begin{equation*}
(t-b_Ks)+t\equiv0\Mod{p}\quad
\textrm{and}\quad1-t^2+c_Ks^2\equiv0\Mod{p}.
\end{equation*}
It then follows from the fact $b_K^2-4c_K=d_K$ that
\begin{equation*}
d_Ks^2\equiv4\Mod{p},
\end{equation*}
and hence
\begin{equation*}
p\nmid d_K\quad\textrm{and}\quad\left(\frac{d_K}{p}\right)=1.
\end{equation*}
However, this case does not happen by the hypothesis on $p$.
\end{enumerate}
\end{proof}

\begin{remark}
For an odd prime $p$, we have the degree formula
\begin{equation*}
[K_{(p)}:H_K]=\left\{
\begin{array}{ll}
\displaystyle\frac{(p-1)^2}{2} & \textrm{if $p$ splits in $K$},\\
\displaystyle\frac{(p-1)p}{2} & \textrm{if $p$ is ramified in $K$},\\
\displaystyle\frac{p^2-1}{2} & \textrm{if $p$ is inert in $K$}
\end{array}\right.
\end{equation*}
(\cite[Theorem 1 in Chapter VI]{Lang94}).
So, $p$ satisfies the hypothesis of Lemma \ref{-I2} if and only if
\begin{equation*}
\left\{\begin{array}{l}
p\,|\,d_K\\
p\equiv1\Mod{4},
\end{array}\right.\quad\textrm{or}\quad
\left(\frac{d_K}{p}\right)=-1.
\end{equation*}
\end{remark}

\begin{theorem}\label{main1}
Assume that $K$ is different from $\mathbb{Q}(\sqrt{-1})$ and
$\mathbb{Q}(\sqrt{-3})$. Let
\begin{equation*}
\widehat{W}_{K,\,p^\infty}=\left\langle
W_{K,\,p^\infty},\,
\begin{bmatrix}1&\phantom{-}b_K\\0&-1\end{bmatrix}
\right\rangle\quad(\subseteq\mathrm{GL}_2(\mathbb{Z}_p)).
\end{equation*}
Then, there exists a $p$-adic Galois representation
$\rho^{}_{p^\infty}:\mathrm{Gal}(\overline{\mathbb{Q}}/\mathbb{Q}(j(E_K)))\rightarrow\mathrm{GL}_2(\mathbb{Q}_p)$
whose image is a subgroup of $\widehat{W}_{K,\,p^\infty}$ of index $1$ or $2$.
In particular, if $p$ is an odd prime
for which it does not split in $K$ and
$[K_{(p)}:H_K]$ is even, then
the image of $\rho^{}_{p^\infty}$ coincides with
$\widehat{W}_{K,\,p^\infty}$.
\end{theorem}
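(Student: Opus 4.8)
The plan is to obtain $\rho^{}_{p^\infty}$ as the inverse limit of the finite-level representations $\rho^{}_{p^n}$ studied in Theorem~\ref{subgroup}, and then to push the index calculation through the limit. First I would precompose each $\rho^{}_{p^n}$ with the restriction $\mathrm{Gal}(\overline{\mathbb{Q}}/\mathbb{Q}(j(E_K)))\twoheadrightarrow\mathrm{Gal}(T_{p^n}/\mathbb{Q}(j(E_K)))$. By the commutative diagram in Figure~\ref{commutative} these composites are compatible with the reductions $\mathrm{GL}_2(\mathbb{Z}/p^{n+1}\mathbb{Z})\to\mathrm{GL}_2(\mathbb{Z}/p^n\mathbb{Z})$, so they assemble into a continuous homomorphism
\begin{equation*}
\rho^{}_{p^\infty}:\mathrm{Gal}(\overline{\mathbb{Q}}/\mathbb{Q}(j(E_K)))\rightarrow\varprojlim_{n\geq1}\mathrm{GL}_2(\mathbb{Z}/p^n\mathbb{Z})=\mathrm{GL}_2(\mathbb{Z}_p)\subseteq\mathrm{GL}_2(\mathbb{Q}_p),
\end{equation*}
namely the representation afforded by the right action on the $p$-adic Tate module of $E_K$. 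Since each $\rho^{}_{p^n}$ is injective and each restriction map is surjective, the image of the composite at level $p^n$ is exactly $\rho^{}_{p^n}(\mathrm{Gal}(T_{p^n}/\mathbb{Q}(j(E_K))))$.

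Next I would identify the target as a limit. As in the proof of Corollary~\ref{Wp} we have $W_{K,\,p^\infty}=\varprojlim_n W_{K,\,p^n}$, and since the extra generator $\left[\begin{smallmatrix}1&b_K\\0&-1\end{smallmatrix}\right]$ is a fixed integral matrix, the coset decompositions $\widehat{W}_{K,\,p^n}=W_{K,\,p^n}\cup W_{K,\,p^n}\left[\begin{smallmatrix}1&b_K\\0&-1\end{smallmatrix}\right]$ are compatible under reduction; reading off a compatible sequence coset by coset gives $\widehat{W}_{K,\,p^\infty}=\varprojlim_n\widehat{W}_{K,\,p^n}$. Because the transition maps between the images $\rho^{}_{p^n}(\mathrm{Gal}(T_{p^n}/\mathbb{Q}(j(E_K))))$ arise from the surjective restriction maps on Galois groups, this inverse system of images satisfies the Mittag-Leffler condition, and hence
\begin{equation*}
\mathrm{Im}(\rho^{}_{p^\infty})=\varprojlim_n\rho^{}_{p^n}(\mathrm{Gal}(T_{p^n}/\mathbb{Q}(j(E_K))))\subseteq\widehat{W}_{K,\,p^\infty}.
\end{equation*}

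The heart of the matter is the index. By Theorem~\ref{subgroup} each finite-level index $[\widehat{W}_{K,\,p^n}:\rho^{}_{p^n}(\mathrm{Gal}(T_{p^n}/\mathbb{Q}(j(E_K))))]$ is $1$ or $2$, and for $p^n\geq3$ Remark~\ref{not-I2} shows it equals $2$ precisely when $-I_2$ is absent from the image (note $-I_2\in W_{K,\,p^n}$, taking $s=0$, $t=-1$, so index $1$ would force $-I_2$ into the image). Lemma~\ref{directproduct} guarantees that the membership of $-I_2$ stabilizes: enlarging $k$ if necessary so that $p^k\geq3$, either $-I_2$ lies in $\rho^{}_{p^n}(\mathrm{Gal}(T_{p^n}/\mathbb{Q}(j(E_K))))$ for all $n\geq k$, or it lies in none. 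In the first case the index is $1$ for all $n\geq k$, so the image is $\widehat{W}_{K,\,p^n}$ and the limit has index $1$; in the second case the index is $2$ for all $n\geq k$, and taking the inverse limit of the short exact sequences $1\to\rho^{}_{p^n}(\mathrm{Gal}(T_{p^n}/\mathbb{Q}(j(E_K))))\to\widehat{W}_{K,\,p^n}\to\mathbb{Z}/2\mathbb{Z}\to1$ (Mittag-Leffler again, as in \cite{Lang02}) yields index $2$. This establishes the first assertion.

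Finally, for the ``in particular'' statement, suppose $p$ is an odd prime that does not split in $K$ with $[K_{(p)}:H_K]$ even. Then Lemma~\ref{-I2} gives $-I_2\in\rho^{}_{p^n}(\mathrm{Gal}(T_{p^n}/\mathbb{Q}(j(E_K))))$ for every $n\geq1$, placing us in the first case with $k=1$; since $p$ is odd we have $p^n\geq3$ for all $n\geq1$, so the index is $1$ at every level and $\mathrm{Im}(\rho^{}_{p^\infty})=\widehat{W}_{K,\,p^\infty}$. I expect the main obstacle to be the inverse-limit bookkeeping, that is, confirming that the profinite image really is the limit of the finite images and that the index passes correctly to the limit; this is exactly where the surjectivity of the transition maps and the Mittag-Leffler condition do the work, in the same spirit as Corollary~\ref{Wp}.
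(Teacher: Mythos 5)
Your proposal is correct and follows essentially the same route as the paper: assemble $\rho^{}_{p^\infty}$ from the compatible finite-level representations, use Theorem \ref{subgroup} together with Remark \ref{not-I2} to get $\widehat{W}_{K,\,p^n}=\langle\rho^{}_{p^n}(\mathrm{Gal}(T_{p^n}/\mathbb{Q}(j(E_K)))),\,-I_2\rangle$, invoke Lemma \ref{directproduct} to stabilize the membership of $-I_2$ and pass the index through the inverse limit, and finish the ``in particular'' case with Lemma \ref{-I2}. Your added bookkeeping (the coset decomposition of $\widehat{W}_{K,\,p^n}$ and the explicit Mittag--Leffler argument for the index) only makes explicit what the paper leaves implicit when it takes the inverse limit of (\ref{Wr}).
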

\begin{proof}
For each $n\geq1$, we get by Theorem \ref{subgroup} and Remark \ref{not-I2} that
\begin{equation}\label{Wr}
\widehat{W}_{K,\,p^n}=\langle\rho^{}_{p^n}
(\mathrm{Gal}(T_{p^n}/\mathbb{Q}(j(E_K)))),\,-I_2\rangle.
\end{equation}
Moreover, by Lemma \ref{directproduct} there is a positive integer $k$ such that
\begin{equation*}
\widehat{W}_{K,\,p^n}=\left\{\begin{array}{l}
\rho^{}_{p^n}(\mathrm{Gal}(T_{p^n}/\mathbb{Q}(j(E_K)))),~\textrm{or}\\
\rho^{}_{p^n}(\mathrm{Gal}(T_{p^n}/\mathbb{Q}(j(E_K))))\oplus\langle -I_2\rangle
\end{array}\right.\quad\textrm{for all}~n\geq k.
\end{equation*}
And we obtain by taking the inverse limit on both sides of (\ref{Wr}) that
\begin{equation*}
\widehat{W}_{K,\,p^\infty}=\left\langle
\varprojlim_{n\geq1}\rho^{}_{p^n}(\mathrm{Gal}(T_{p^n}/\mathbb{Q}(j(E_K)))),\,-I_2\right\rangle
\quad\textrm{in}~\mathrm{GL}_2(\mathbb{Z}_p).
\end{equation*}
Let $\displaystyle T_{p^\infty}=\bigcup_{n\geq1}T_{p^n}$.
By composing the homomorphisms in Figure \ref{compositiondiagram}
we attain the $p$-adic Galois representation
\begin{equation*}
\rho^{}_{p^\infty}~:~\mathrm{Gal}(\overline{\mathbb{Q}}/\mathbb{Q}(j(E_K)))~\rightarrow~
\mathrm{GL}_2(\mathbb{Q}_p)
\end{equation*}
which satisfies
\begin{equation*}
[\widehat{W}_{K,\,p^\infty}:\rho^{}_{p^\infty}
(\mathrm{Gal}(\overline{\mathbb{Q}}/\mathbb{Q}(j(E_K))))]=1~\textrm{or}~2.
\end{equation*}
This proves the first part of the theorem.

\begin{figure}[h]
\begin{equation*}
\xymatrixcolsep{2pc}\xymatrix{
~~~~\mathrm{Gal}(\overline{\mathbb{Q}}/\mathbb{Q}(j(E_K)))~~~~ \ar[r]^{\textrm{restriction}} &
~~\mathrm{Gal}(T_{p^\infty}/\mathbb{Q}(j(E_K)))~~ \ar[r]^\sim  &
~~\displaystyle\varprojlim_{n\geq1}\mathrm{Gal}(T_{p^n}/\mathbb{Q}(j(E_K)))~~ \ar[d]^\wr\\
~~\mathrm{GL}_2(\mathbb{Q}_p)~~ &
~~\widehat{W}_{K,\,p^\infty}~~ \ar@{_{(}->}[l]&
~~\displaystyle\varprojlim_{n\geq1}\rho^{}_{p^n}(\mathrm{Gal}(T_{p^n}/\mathbb{Q}(j(E_K))))~~ \ar@{_{(}->}[l]
}
\end{equation*}
\caption{A sequence of homomorphisms}\label{compositiondiagram}
\end{figure}
\par
In particular, let
$p$ be an odd prime
for which it does not split in $K$ and
$[K_{(p)}:H_K]$ is even.
Since $\rho^{}_{p^n}(\mathrm{Gal}(T_{p^n}/\mathbb{Q}(j(E_K))))$ contains $-I_2$ for all $n\geq1$ by
Lemma \ref{-I2}, we achieve by (\ref{Wr}) that
\begin{equation*}
\rho^{}_{p^\infty}(\mathrm{Gal}(\overline{\mathbb{Q}}/\mathbb{Q}(j(E_K))))=
\varprojlim_{n\geq1}\rho^{}_{p^n}(\mathrm{Gal}(T_{p^n}/\mathbb{Q}(j(E_K))))=
\widehat{W}_{K,\,p^\infty}.
\end{equation*}
\end{proof}

\section {Extended form class groups}\label{EFCG}

From now on, we let $K$ be an arbitrary imaginary quadratic field
including both $\mathbb{Q}(\sqrt{-1})$ and $\mathbb{Q}(\sqrt{-3})$, and let $N$ be a positive integer.
We shall introduce the recent work of Eum, Koo and Shin
(\cite{E-K-S}) on constructing a form class group
isomorphic to $\mathrm{Gal}(K_{(N)}/K)$.
\par
Let
$\mathcal{Q}_N(d_K)$ be the set of certain binary quadratic forms over $\mathbb{Z}$ given by
\begin{equation*}
\mathcal{Q}_N(d_K)=\left\{ax^2+bxy+cy^2\in\mathbb{Z}[x,\,y]
~|~\gcd(a,\,b,\,c)=1,~b^2-4ac=d_K,~a>0,~\gcd(a,\,N)=1\right\}.
\end{equation*}
The congruence subgroup
\begin{equation*}
\Gamma_1(N)=\left\{\gamma\in\mathrm{SL}_2(\mathbb{Z})~|~
\gamma\equiv\begin{bmatrix}
1&\mathrm{*}\\0&1
\end{bmatrix}\Mod{NM_2(\mathbb{Z})}\right\}
\end{equation*}
acts on the set $\mathcal{Q}_N(d_K)$
from the right, which induces the equivalence relation $\sim^{}_N$ as
\begin{equation*}
Q\sim^{}_N Q'\quad\Longleftrightarrow\quad
Q'=Q^\gamma=Q\left(\gamma\begin{bmatrix}x\\y\end{bmatrix}\right)~
\textrm{for some}~\gamma\in\Gamma_1(N)
\end{equation*}
(\cite[Proposition 2.1 and Definition 2.2]{E-K-S}).
Denote the set of equivalence classes by $\mathcal{C}_N(d_K)$, namely,
\begin{equation*}
\mathcal{C}_N(d_K)=\mathcal{Q}_N(d_K)/\sim^{}_N
=\{[Q]^{}_N~|~Q\in\mathcal{Q}_N(d_K)\}.
\end{equation*}
For each $Q=ax^2+bxy+cy^2\in\mathcal{Q}_N(d_K)$, let $\omega_Q$ be the zero
of the quadratic polynomial $Q(x,\,1)$ lying in $\mathbb{H}$, that is,
\begin{equation*}
\omega_Q=\frac{-b+\sqrt{d_K}}{2a}.
\end{equation*}
Then one can readily check that for $Q,\,Q'\in\mathcal{Q}_N(d_K)$
and $\alpha\in\mathrm{SL}_2(\mathbb{Z})$
\begin{equation}\label{wQequal}
\omega_Q=\omega^{}_{Q'}\quad\Longleftrightarrow\quad
Q=Q'
\end{equation}
and
\begin{equation}\label{wQgamma}
\omega^{}_{Q^\alpha}=\alpha^{-1}(\omega_Q).
\end{equation}
Let $\mathrm{Cl}(N\mathcal{O}_K)$ be the ray class group of $K$ modulo $N\mathcal{O}_K$,
i.e.,
\begin{equation*}
\mathrm{Cl}(N\mathcal{O}_K)=I_K(N\mathcal{O}_K)/P_{K,\,1}(N\mathcal{O}_K)
\end{equation*}
where $I_K(N\mathcal{O}_K)$ is the group of all
fractional ideals of $K$ relatively prime to $N\mathcal{O}_K$
and $P_{K,\,1}(N\mathcal{O}_K)$ is its subgroup given by
\begin{equation*}
P_{K,\,1}(N\mathcal{O}_K)=\left\{
\nu\mathcal{O}_K~|~\nu\in K^\times~\textrm{satisfies}~\nu\equiv^*1\Mod{N\mathcal{O}_K}\right\}.
\end{equation*}
Here, $\nu\equiv^* 1 \Mod{N\mathcal{O}_K}$
means that $\mathrm{ord}_\mathfrak{p}(\nu-1)\geq \mathrm{ord}_\mathfrak{p}(N\mathcal{O}_K)$ for all prime ideals $\mathfrak{p}$ of $\mathcal{O}_K$ dividing $N\mathcal{O}_K$.
Eum et al. showed that
the map
\begin{equation*}
\begin{array}{ccl}
\mathcal{C}_N(d_K)&\rightarrow&\mathrm{Cl}(N\mathcal{O}_K)\\
\mathrm{[}Q]&\mapsto&[[\omega_Q,\,1]]=[\mathbb{Z}\omega_Q+\mathbb{Z}]
\end{array}
\end{equation*}
is a well-defined bijection
(\cite[Theorem 2.9]{E-K-S}).
Therefore the set $\mathcal{C}_N(d_K)$ can be regarded as a group
isomorphic to $\mathrm{Cl}(N\mathcal{O}_K)$
through the above bijection,
called the \textit{extended form class group} of discriminant $d_K$ and level $N$.
The principal form
\begin{equation*}
Q_0=\left\{
\begin{array}{ll}
\displaystyle x^2+xy+\frac{1-d_K}{4}y^2 & \textrm{if}~d_K\equiv1\Mod{4},\\
\displaystyle x^2-\frac{d_K}{4}y^2 & \textrm{if}~d_K\equiv0\Mod{4}
\end{array}\right.
\end{equation*}
becomes the identity element of $\mathcal{C}_N(d_K)$.

\begin{definition}\label{+invariant}
Let $Q=ax^2+bxy+cy^2\in\mathcal{Q}_N(d_K)$
and $f\in\mathcal{F}_N$.
We define
\begin{equation*}
f([Q]^{}_N)=f^{\gamma_Q^{-1}}(-\overline{\omega}^{}_Q)
\end{equation*}
where
\begin{equation*}
\gamma^{}_Q=\begin{bmatrix}
1& \frac{b+b_K}{2}\\0&a\end{bmatrix}
\quad(\in\mathrm{GL}_2(\mathbb{Z}/N\mathbb{Z})/\{\pm I_2\}
\simeq\mathrm{Gal}(\mathcal{F}_N/\mathcal{F}_1)).
\end{equation*}
This value depends only on the class $[Q]^{}_N$
in $\mathcal{C}_N(d_K)$ if it is finite (\cite[Proposition 3.3 and Lemma 3.7]{Yoon}).
In particular, one can derive the equality $f([Q_0]^{}_N)=f(\tau^{}_K)$.
\end{definition}

The fact that $\mathrm{Cl}(N\mathcal{O}_K)\simeq
\mathrm{Gal}(K_{(N)}/K)$ and the original version of the Shimura reciprocity law
(\cite[Theorem 6.31]{Shimura})
lead to the following proposition.

\begin{proposition}\label{extended}
We have the isomorphism
\begin{equation*}
\begin{array}{ccl}
\phi^{}_N~:~\mathcal{C}_N(d_K)&\stackrel{\sim}{\rightarrow}&
\mathrm{Gal}(K_{(N)}/K)\vspace{0.1cm}\\
~~~~~~~~~\mathrm{[}Q\mathrm{]}_N&\mapsto&
\left(
f([Q_0]^{}_N)=f(\tau^{}_K)\mapsto f([Q]^{}_N)~|~f\in\mathcal{F}_N~\textrm{is finite at}~\tau^{}_K\right).
\end{array}
\end{equation*}
\end{proposition}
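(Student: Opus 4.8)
The plan is to realize $\phi^{}_N$ as a composition of two isomorphisms already at our disposal and then to verify that this composition is given by the stated formula on singular values. By \cite[Theorem 2.9]{E-K-S} the assignment $[Q]^{}_N\mapsto[\mathfrak{a}_Q]$ with $\mathfrak{a}_Q=[\omega_Q,\,1]=\mathbb{Z}\omega_Q+\mathbb{Z}$ is a group isomorphism $\mathcal{C}_N(d_K)\xrightarrow{\sim}\mathrm{Cl}(N\mathcal{O}_K)$, and the Artin map of class field theory supplies an isomorphism $\mathrm{Cl}(N\mathcal{O}_K)\xrightarrow{\sim}\mathrm{Gal}(K_{(N)}/K)$ sending $[\mathfrak{a}]$ to its Artin symbol $\sigma_{\mathfrak{a}}=\left(\frac{K_{(N)}/K}{\mathfrak{a}}\right)$. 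Composing these yields an isomorphism $\mathcal{C}_N(d_K)\xrightarrow{\sim}\mathrm{Gal}(K_{(N)}/K)$, $[Q]^{}_N\mapsto\sigma_{\mathfrak{a}_Q}$. The whole proposition therefore reduces to the single identity
\[
\sigma_{\mathfrak{a}_Q}\bigl(f([Q_0]^{}_N)\bigr)=\sigma_{\mathfrak{a}_Q}\bigl(f(\tau^{}_K)\bigr)=f([Q]^{}_N)\qquad(f\in\mathcal{F}_N~\textrm{finite at}~\tau^{}_K),
\]
for once this holds, the well-definedness of $\phi^{}_N$ as a map into $\mathrm{Gal}(K_{(N)}/K)$, the homomorphism property, and bijectivity are all inherited from the two isomorphisms above.

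To establish this identity I would invoke the original Shimura reciprocity law \cite[Theorem 6.31]{Shimura} in its explicit idele--matrix form. Writing $q^{}_{\tau_K}\colon K_\mathbb{A}^\times\to\mathrm{GL}_2(\mathbb{A}_{\mathbb{Q},f})$ for the embedding induced by multiplication on $K=\mathbb{Q}\tau^{}_K+\mathbb{Q}$ with respect to the ordered basis $\{\tau^{}_K,\,1\}$, I would pick a finite idele $s$ representing $\mathfrak{a}_Q$ and separate $q^{}_{\tau_K}(s)$ into a rational part $\gamma\in\mathrm{GL}_2^+(\mathbb{Q})$ and an integral part $u\in\mathrm{GL}_2(\widehat{\mathbb{Z}})$. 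In this form the reciprocity law expresses $f(\tau^{}_K)^{\sigma_{\mathfrak{a}_Q}}$ as $f^{\bar u}(\gamma(\tau^{}_K))$, where $\bar u$ is the reduction of $u$ modulo $N$ acting through $\mathrm{Gal}(\mathcal{F}_N/\mathcal{F}_1)\simeq\mathrm{GL}_2(\mathbb{Z}/N\mathbb{Z})/\{\pm I_2\}$, and $\gamma$ acts as a fractional linear transformation on the evaluation point.

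The key computation is to carry out this splitting for $\mathfrak{a}_Q=[\omega_Q,\,1]$ and to match it against $\gamma^{}_Q=\left[\begin{smallmatrix}1&\frac{b+b_K}{2}\\0&a\end{smallmatrix}\right]$. Here I can at least verify the geometric half cleanly: since $b\equiv b_K\Mod{2}$ we have $\gamma^{}_Q\in M_2(\mathbb{Z})$, since $\gcd(a,\,N)=1$ its reduction lies in $\mathrm{GL}_2(\mathbb{Z}/N\mathbb{Z})$, and a direct calculation gives
\[
\gamma^{}_Q(\tau^{}_K)=\frac{\tau^{}_K+\frac{b+b_K}{2}}{a}=\frac{\frac{-b_K+\sqrt{d_K}}{2}+\frac{b+b_K}{2}}{a}=\frac{b+\sqrt{d_K}}{2a}=-\overline{\omega}^{}_Q .
\]
Thus the point produced by the rational part is exactly $-\overline{\omega}^{}_Q$, i.e. $\gamma=\gamma^{}_Q$, while the integral part is to reduce modulo $N$ to $\gamma_Q^{-1}$; substituting these into the reciprocity formula gives $f(\tau^{}_K)^{\sigma_{\mathfrak{a}_Q}}=f^{\gamma_Q^{-1}}(-\overline{\omega}^{}_Q)=f([Q]^{}_N)$ by Definition \ref{+invariant}. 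Finiteness of $f([Q]^{}_N)$ and its dependence only on the class $[Q]^{}_N$ are guaranteed by \cite[Proposition 3.3 and Lemma 3.7]{Yoon}, and the normalization $f([Q_0]^{}_N)=f(\tau^{}_K)$ is just the case $a=1$, $b=-b_K$, in which $\gamma_{Q_0}$ is trivial modulo $\{\pm I_2\}$.

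The main obstacle is precisely this idele bookkeeping: choosing the correct idele $s$ attached to $\mathfrak{a}_Q$, performing the local factorizations of $q^{}_{\tau_K}(s)$, and keeping every convention mutually consistent --- the direction of the Artin map, the placement of the inverse in the reciprocity law, the choice of $\mathfrak{a}_Q$ versus $\mathfrak{a}_Q^{-1}$, and the unavoidable $\{\pm I_2\}$ ambiguity --- so that the rational part lands exactly on $\gamma^{}_Q$ (and not $\gamma_Q^{-1}$ or a conjugate) and the integral part on $\gamma_Q^{-1}$. A more self-contained alternative would be to build the action up in two layers from data already in the paper: the subgroup $\mathrm{Gal}(K_{(N)}/H_K)$ is governed by $W_{K,\,N}$ via Proposition \ref{reciprocity} (the forms with $a=1$), while the quotient $\mathrm{Gal}(H_K/K)\simeq\mathrm{Cl}(\mathcal{O}_K)$ permutes the CM points $\omega_Q$ across ideal classes; reconciling these two layers with the single formula for $f([Q]^{}_N)$ would again hinge on the same change-of-lattice computation displayed above.
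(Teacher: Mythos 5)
Your proposal follows exactly the route the paper indicates: the paper offers no proof of its own beyond citing \cite[Theorem 3.10]{E-K-S} and \cite[Theorem 3.8]{Yoon}, and its preamble states that the proposition follows from the isomorphism $\mathrm{Cl}(N\mathcal{O}_K)\simeq\mathrm{Gal}(K_{(N)}/K)$ together with the Shimura reciprocity law \cite[Theorem 6.31]{Shimura}, which is precisely the composition you set up; your verification that $\gamma^{}_Q(\tau^{}_K)=-\overline{\omega}^{}_Q$ is correct. The idelic bookkeeping you defer is exactly the content of the cited references, so your sketch matches the paper's (outsourced) argument.
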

\begin{proof}
See \cite[Theorem 3.10]{E-K-S} or
\cite[Theorem 3.8]{Yoon}.
\end{proof}

\begin{remark}\label{extendedQQ}
Let $Q,\,Q'\in\mathcal{Q}_N(d_K)$
and $f\in\mathcal{F}_N$ such that $f([Q]^{}_N)$ is finite.
By the homomorphism property of $\phi^{}_N$, we obtain
\begin{equation*}
\phi^{}_N([Q']^{}_N)\left(f([Q]^{}_N)\right)=
f([Q']^{}_N[Q]^{}_N).
\end{equation*}
\end{remark}

\section {Definite form class groups}

By improving the results introduced in $\S$\ref{EFCG}
we shall construct
the definite form class group which is isomorphic to $\mathrm{Gal}(K_{(N)}/\mathbb{Q})$,
and define the definite form class invariants (Theorems \ref{pre3} and \ref{pre4}).
First, we need the following lemma.

\begin{lemma}\label{semi}
The extension $K_{(N)}/\mathbb{Q}$ is Galois
and
\begin{equation*}
\mathrm{Gal}(K_{(N)}/\mathbb{Q})=
\mathrm{Gal}(K_{(N)}/K)\rtimes
\langle\mathfrak{c}|_{K_{(N)}}\rangle.
\end{equation*}
\end{lemma}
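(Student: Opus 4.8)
The plan is to first prove that $K_{(N)}/\mathbb{Q}$ is Galois, and then to exhibit complex conjugation as a splitting of the natural restriction sequence. For the Galois part, I would take an arbitrary $\sigma\in\mathrm{Gal}(\overline{\mathbb{Q}}/\mathbb{Q})$ and show $\sigma(K_{(N)})=K_{(N)}$. Since $K/\mathbb{Q}$ is Galois, $\sigma$ restricts to an automorphism of $K$, so $\sigma|_K\in\mathrm{Gal}(K/\mathbb{Q})=\{\mathrm{id},\,\mathfrak{c}|_K\}$. By the functoriality of class field theory, $\sigma$ carries the ray class field of $K$ attached to the modulus $N\mathcal{O}_K$ to the ray class field attached to $\sigma(N\mathcal{O}_K)$. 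The key observation is that $N\mathcal{O}_K$ is stable under $\mathrm{Gal}(K/\mathbb{Q})$: indeed $N\in\mathbb{Z}$ and $\mathcal{O}_K$ is fixed by complex conjugation, so $\sigma(N\mathcal{O}_K)=N\mathcal{O}_K$ (and the defining congruence $\nu\equiv^*1\Mod{N\mathcal{O}_K}$ is likewise preserved). By the uniqueness of the ray class field for a prescribed modulus, $\sigma(K_{(N)})=K_{(N)}$, whence $K_{(N)}/\mathbb{Q}$ is Galois.

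Next I would record the short exact sequence
\begin{equation*}
1\longrightarrow\mathrm{Gal}(K_{(N)}/K)\longrightarrow\mathrm{Gal}(K_{(N)}/\mathbb{Q})\stackrel{\mathrm{res}}{\longrightarrow}\mathrm{Gal}(K/\mathbb{Q})\longrightarrow1,
\end{equation*}
which is valid because $K/\mathbb{Q}$ is Galois; here $\mathrm{Gal}(K_{(N)}/K)$ is normal as the kernel of the restriction map, and $\mathrm{Gal}(K/\mathbb{Q})=\langle\mathfrak{c}|_K\rangle$ has order $2$ since $K$ is imaginary quadratic.

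To split this sequence I would use $\mathfrak{c}|_{K_{(N)}}$, which is a genuine element of $\mathrm{Gal}(K_{(N)}/\mathbb{Q})$ precisely because the first step gives $\mathfrak{c}(K_{(N)})=K_{(N)}$. Since $\mathfrak{c}$ acts nontrivially on the imaginary quadratic field $K$, its restriction $\mathfrak{c}|_{K_{(N)}}$ does not lie in $\mathrm{Gal}(K_{(N)}/K)$; being an involution, it generates a subgroup of order $2$ meeting $\mathrm{Gal}(K_{(N)}/K)$ trivially. Finally, the degree identity $[K_{(N)}:\mathbb{Q}]=[K_{(N)}:K]\cdot[K:\mathbb{Q}]=|\mathrm{Gal}(K_{(N)}/K)|\cdot 2$ shows that the product of $\mathrm{Gal}(K_{(N)}/K)$ with $\langle\mathfrak{c}|_{K_{(N)}}\rangle$ exhausts the whole group. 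These are exactly the conditions for an internal semidirect product, giving $\mathrm{Gal}(K_{(N)}/\mathbb{Q})=\mathrm{Gal}(K_{(N)}/K)\rtimes\langle\mathfrak{c}|_{K_{(N)}}\rangle$.

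I expect the main obstacle to be the Galois step, specifically the justification that applying a $\mathbb{Q}$-automorphism to $K_{(N)}$ returns the ray class field of the same modulus; once the conjugation-invariance of $N\mathcal{O}_K$ is in hand, the remainder is routine group theory and degree bookkeeping.
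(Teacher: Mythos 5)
Your proof is correct and takes essentially the same route as the paper: both establish that $K_{(N)}$ is stable under complex conjugation by observing that the modulus $N\mathcal{O}_K$ and the congruence condition defining $P_{K,\,1}(N\mathcal{O}_K)$ are conjugation-invariant and then invoking the uniqueness of the ray class field (the paper makes the functoriality explicit via the formula $\left(\frac{\mathfrak{c}(K_{(N)})/\mathfrak{c}(K)}{\mathfrak{c}(\mathfrak{a})}\right)=\mathfrak{c}\left(\frac{K_{(N)}/K}{\mathfrak{a}}\right)\mathfrak{c}^{-1}$, which you cite as folklore). The concluding group theory --- normality of $\mathrm{Gal}(K_{(N)}/K)$, trivial intersection with the order-two subgroup $\langle\mathfrak{c}|_{K_{(N)}}\rangle$, and the degree count --- matches the paper's argument.
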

\begin{proof}
Since
$\mathfrak{c}|_K\neq\mathrm{id}_K$ and $[K:\mathbb{Q}]=2$, we see that
\begin{equation*}
\rho,\,\mathfrak{c}\rho\quad(\rho\in\mathrm{Gal}(K_{(N)}/K))
\end{equation*}
are all the distinct embeddings of $K_{(N)}$ into $\mathbb{C}$.
To prove the first part of the lemma, it suffices to show that
$\mathfrak{c}(K_{(N)})=K_{(N)}$.
We find that
\begin{eqnarray*}
&&\mathrm{Gal}(\mathfrak{c}(K_{(N)})/\mathfrak{c}(K))\\
&=&\mathfrak{c}\,\mathrm{Gal}(K_{(N)}/K)\,\mathfrak{c}^{-1}\\
&\simeq&I_{\mathfrak{c}(K)}(\mathfrak{c}(N\mathcal{O}_K))
/\{\mathfrak{c}(\mathfrak{a})~|~\mathfrak{a}\in P_{K,\,1}(N\mathcal{O}_K)\}\\
&&\textrm{since the Artin symbol satisfies}~
\left(\frac{\mathfrak{c}(K_{(N)})/\mathfrak{c}(K)}{\mathfrak{c}(\mathfrak{a})}\right)
=\mathfrak{c}\left(\frac{K_{(N)}/K}{\mathfrak{a}}\right)\mathfrak{c}^{-1}\\
&=&I_K(N\mathcal{O}_K)/P_{K,\,1}(N\mathcal{O}_K)\\
&&\textrm{because if $\alpha\in K^\times$ satisfies}~\alpha\equiv^*1\Mod{N\mathcal{O}_K},~
\textrm{then}~\mathfrak{c}(\alpha)\equiv^*1\Mod{N\mathcal{O}_K}.
\end{eqnarray*}
Now, the existence theorem of class field theory
(\cite[Theorem 8.6]{Cox} or \cite[$\S$V.9]{Janusz}) implies $\mathfrak{c}(K_{(N)})=K_{(N)}$.
\par
Observe that $\mathrm{Gal}(K_{(N)}/K)$ is normal in $\mathrm{Gal}(K_{(N)}/\mathbb{Q})$
because
\begin{equation*}
[\mathrm{Gal}(K_{(N)}/\mathbb{Q}):\mathrm{Gal}(K_{(N)}/K)]=[K:\mathbb{Q}]=2.
\end{equation*}
Furthermore, we see that
$\mathrm{Gal}(K_{(N)}/K)\cap\langle\mathfrak{c}|_{K_{(N)}}\rangle=
\{\mathrm{id}_{K_{(N)}}\}$,
and so
\begin{equation*}
\mathrm{Gal}(K_{(N)}/\mathbb{Q})=\mathrm{Gal}(K_{(N)}/K)
\langle\mathfrak{c}|_{K_{(N)}}\rangle.
\end{equation*}
Therefore, $\mathrm{Gal}(K_{(N)}/\mathbb{Q})$
is the semidirect  product of $\mathrm{Gal}(K_{(N)}/K)$ and
$\langle\mathfrak{c}|_{K_{(N)}}\rangle$.
\end{proof}

Let us denote by
\begin{eqnarray*}
\mathcal{Q}_N^\pm(d_K)&=&\{ax^2+bxy+cy^2\in\mathbb{Z}[x,\,y]~|~
\gcd(a,\,b,\,c)=1,~b^2-4ac=d_K,~\gcd(a,\,N)=1\},\\
\mathcal{Q}_N^+(d_K)&=&\{Q\in\mathcal{Q}_N^\pm(d_K)~|~
\textrm{$Q$ is positive definite}\}~=~\mathcal{Q}_N(d_K),\\
\mathcal{Q}_N^-(d_K)&=&\{Q\in\mathcal{Q}_N^\pm(d_K)~|~
\textrm{$Q$ is negative definite}\}.
\end{eqnarray*}
We then see that
\begin{eqnarray}
\mathcal{Q}_N^\pm(d_K)&=&\mathcal{Q}_N^+(d_K)
\cupdot\mathcal{Q}_N^-(d_K),\label{union}\\
\mathcal{Q}_N^-(d_K)&=&\{-Q=(-1)Q~|~
Q\in\mathcal{Q}_N^+(d_K)\}.\label{-+}
\end{eqnarray}
The action of $\Gamma_1(N)$ on $\mathcal{Q}_N(d_K)$
can be naturally extended to that on $\mathcal{Q}_N^\pm(d_K)$ as
\begin{equation*}
Q^\gamma=Q\left(\gamma\begin{bmatrix}x\\y\end{bmatrix}\right)
\quad(Q\in\mathcal{Q}_N^\pm(d_K),\,\gamma\in\Gamma_1(N)),
\end{equation*}
which induces the equivalence relation $\sim^\pm_N$ on $\mathcal{Q}_N^\pm(d_K)$.
Denote by
\begin{equation*}
\mathcal{C}_N^\pm(d_K)~=~\mathcal{Q}_N^\pm(d_K)/\sim^\pm_N~=~
\{[Q]^\pm_N~|~Q\in\mathcal{Q}^\pm_N(d_K)\}
\end{equation*}
the set of equivalence classes.
For $Q\in\mathcal{Q}_N^\pm(d_K)$, we let
\begin{equation*}
\mathrm{sgn}(Q)=\left\{\begin{array}{rl}
1 & \textrm{if}~Q\in\mathcal{Q}_N^+(d_K),\\
-1 & \textrm{if}~Q\in\mathcal{Q}_N^-(d_K).
\end{array}\right.
\end{equation*}
Observe that if $\mathrm{sgn}(Q)=1$,
then $[Q]^{}_N$ in $\mathcal{C}_N(d_K)$
coincides with $[Q]^\pm_N$ in $\mathcal{C}^\pm_N(d_K)$.
Thus, one can simply write without confusion $\sim^{}_N$ and $[Q]^{}_N$ for
$\sim^\pm_N$ and $[Q]^\pm_N$, respectively. Recall from
Proposition \ref{extended} that
we have the explicit isomorphism $\phi^{}_N:\mathcal{C}_N(d_K)\rightarrow
\mathrm{Gal}(K_{(N)}/K)$.

\begin{theorem}\label{C+-}
The set $\mathcal{C}_N^\pm(d_K)$ can be given
a group structure isomorphic to $\mathrm{Gal}(K_{(N)}/\mathbb{Q})$ so that
it contains $\mathcal{C}_N(d_K)$ as a subgroup
and the element $[-Q_0]^{}_N$ corresponds to
$\mathfrak{c}|_{K_{(N)}}$.
\end{theorem}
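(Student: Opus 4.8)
The plan is to construct the group structure on $\mathcal{C}_N^\pm(d_K)$ by transport along an explicit bijection onto $\mathrm{Gal}(K_{(N)}/\mathbb{Q})$, using Proposition \ref{extended} on the positive-definite part and Lemma \ref{semi} to supply the target decomposition. First I would record that the sign function $\mathrm{sgn}$ descends to $\mathcal{C}_N^\pm(d_K)$: since $\Gamma_1(N)\subseteq\mathrm{SL}_2(\mathbb{Z})$ acts by an invertible real change of variables, $\mathrm{sgn}(Q^\gamma)=\mathrm{sgn}(Q)$, so by (\ref{union}) one gets a disjoint decomposition $\mathcal{C}_N^\pm(d_K)=\mathcal{C}_N^+(d_K)\,\cupdot\,\mathcal{C}_N^-(d_K)$ with $\mathcal{C}_N^+(d_K)=\mathcal{C}_N(d_K)$. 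I would also note that $Q\mapsto -Q$ respects $\sim^{}_N$ because $(-Q)^\gamma=-(Q^\gamma)$, so by (\ref{-+}) it induces a bijection $[Q]^{}_N\mapsto[-Q]^{}_N$ from $\mathcal{C}_N^+(d_K)$ onto $\mathcal{C}_N^-(d_K)$.

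Next I would define the map
\begin{equation}\label{phi+-}
\phi^\pm_N([Q]^{}_N)=\left\{
\begin{array}{ll}
\phi^{}_N([Q]^{}_N) & \textrm{if}~\mathrm{sgn}(Q)=1,\\
\phi^{}_N([-Q]^{}_N)\,\mathfrak{c}|_{K_{(N)}} & \textrm{if}~\mathrm{sgn}(Q)=-1,
\end{array}\right.
\end{equation}
where in the second line $-Q\in\mathcal{Q}_N^+(d_K)$, so that $\phi^{}_N([-Q]^{}_N)$ is defined. Well-definedness then follows from the two observations above together with the well-definedness of $\phi^{}_N$ in Proposition \ref{extended}. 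I would then check that $\phi^\pm_N$ is a bijection: by Lemma \ref{semi} the codomain is the disjoint union $\mathrm{Gal}(K_{(N)}/K)\,\cupdot\,\mathrm{Gal}(K_{(N)}/K)\,\mathfrak{c}|_{K_{(N)}}$; the first line carries $\mathcal{C}_N^+(d_K)$ bijectively onto $\mathrm{Gal}(K_{(N)}/K)$ (this is $\phi^{}_N$), and the second line carries $\mathcal{C}_N^-(d_K)$ bijectively onto the nontrivial coset, being the composite of the negation bijection, the isomorphism $\phi^{}_N$, and right translation by $\mathfrak{c}|_{K_{(N)}}$.

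Finally I would equip $\mathcal{C}_N^\pm(d_K)$ with the unique group structure making $\phi^\pm_N$ an isomorphism onto $\mathrm{Gal}(K_{(N)}/\mathbb{Q})$. Then the three assertions drop out: the restriction of $\phi^\pm_N$ to $\mathcal{C}_N^+(d_K)$ is exactly $\phi^{}_N$, whose image $\mathrm{Gal}(K_{(N)}/K)$ is a subgroup, so $\mathcal{C}_N(d_K)$ sits inside $\mathcal{C}_N^\pm(d_K)$ as a subgroup carrying its original structure; and since $-(-Q_0)=Q_0$ and $\phi^{}_N([Q_0]^{}_N)=\mathrm{id}$, formula (\ref{phi+-}) gives $\phi^\pm_N([-Q_0]^{}_N)=\mathfrak{c}|_{K_{(N)}}$.

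I expect the main point requiring care to be the interaction of the two halves rather than any single hard computation: one must confirm that the negative-definite classes land precisely in the nontrivial coset furnished by Lemma \ref{semi}, which is what forces the semidirect-product structure and pins down the correspondence $[-Q_0]^{}_N\leftrightarrow\mathfrak{c}|_{K_{(N)}}$. It is also worth fixing the convention in (\ref{phi+-}) (placing $\mathfrak{c}|_{K_{(N)}}$ on the right) so that, for later use, $\phi^\pm_N([Q]^{}_N)$ sends $f(\tau^{}_K)=f([Q_0]^{}_N)$ to the definite form class invariant, matching the transformation rule of Theorem \ref{pre4}; verifying this compatibility amounts to computing how $\mathfrak{c}$ conjugates $\mathrm{Gal}(K_{(N)}/K)$, i.e. to identifying the conjugate ideal $\overline{[\omega_Q,1]}$ with the opposite form via $-\overline{\omega}^{}_Q=\omega^{}_{ax^2-bxy+cy^2}$.
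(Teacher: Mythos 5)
Your proposal is correct and follows essentially the same route as the paper: the authors likewise observe that $\mathrm{sgn}$ is $\sim^{}_N$-invariant, identify $\mathcal{C}_N^\pm(d_K)$ with $\mathcal{C}_N(d_K)\times\{1,-1\}$, and transport the group structure through the bijection $[Q]^{}_N\mapsto\phi^{}_N([\mathrm{sgn}(Q)Q]^{}_N)\,\mathfrak{c}|_{K_{(N)}}^{(1-\mathrm{sgn}(Q))/2}$ onto $\mathrm{Gal}(K_{(N)}/K)\rtimes\langle\mathfrak{c}|_{K_{(N)}}\rangle$ furnished by Lemma \ref{semi}. Your case-by-case formula for $\phi^\pm_N$ is exactly the paper's map written out, and your closing verifications (restriction to the positive-definite part being $\phi^{}_N$, and $[-Q_0]^{}_N\mapsto\mathfrak{c}|_{K_{(N)}}$) match the intended conclusions.
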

\begin{proof}
Note that if $Q\sim^{}_N Q'$ for $Q,\,Q'\in\mathcal{Q}_N^\pm(d_K)$, then $\mathrm{sgn}(Q)=\mathrm{sgn}(Q')$.
This observation, together with (\ref{union}) and (\ref{-+}), implies that
\begin{eqnarray*}
\mathcal{C}_N^\pm(d_K)&\rightarrow&\mathcal{C}_N(d_K)\times\{1,\,-1\}\\
\mathrm{[}Q\mathrm{]}_N & \mapsto & ([\mathrm{sgn}(Q)Q]^{}_N,\,\mathrm{sgn}(Q))
\end{eqnarray*}
is a well-defined bijection.
And, by Lemma \ref{semi} we deduce the bijection
\begin{equation}\label{phi+-}
\begin{array}{ccl}
\mathcal{C}_N^\pm(d_K)&\rightarrow&
\mathrm{Gal}(K_{(N)}/\mathbb{Q})=\mathrm{Gal}(K_{(N)}/K)
\rtimes\langle\mathfrak{c}|_{K_{(N)}}\rangle\vspace{0.1cm}\\
\mathrm{[}Q\mathrm{]}_N & \mapsto &
\phi^{}_N([\mathrm{sgn}(Q)Q]^{}_N)\,
\mathfrak{c}|_{K_{(N)}}^{\frac{1-\mathrm{sgn}(Q)}{2}}.
\end{array}
\end{equation}
This proves the theorem.
\end{proof}

\begin{remark}\label{sgnproduct}
\begin{enumerate}
\item[(i)] Since $\mathfrak{c}|_{K_{(N)}}$ is of order $2$, we get
$[-Q_0]^{}_N[-Q_0]^{}_N=[Q_0]^{}_N$.
\item[(ii)] Let $Q,\,Q'\in\mathrm{Q}_N^\pm(d_K)$.
Since $\mathrm{sgn}(Q)$ depends only on the class
$[Q]^{}_N$, we may also write $\mathrm{sgn}([Q]^{}_N)$ for $\mathrm{sgn}(Q)$.
And we see from the isomorphism in (\ref{phi+-}) (and the concept
of a semidirect product) that
\begin{equation*}
\mathrm{sgn}([Q]^{}_N[Q']^{}_N)=\mathrm{sgn}([Q]^{}_N)\mathrm{sgn}([Q']^{}_N).
\end{equation*}
\end{enumerate}
\end{remark}

Now, we let
\begin{equation*}
\phi^\pm_N~:~\mathcal{C}^\pm_N(d_K)\rightarrow\mathrm{Gal}(K_{(N)}/\mathbb{Q})
\end{equation*}
be the isomorphism stated in (\ref{phi+-}).
By extending Definition \ref{+invariant} we define the
definite form class invariants as follows.

\begin{definition}\label{definvariant}
Let $Q\in\mathcal{Q}_N^\pm(d_K)$ and $f\in\mathcal{F}_N$.
Define
\begin{eqnarray*}
f([Q]^{}_N)&=&
\phi^\pm_N([-Q_0]_N^{\frac{1-\mathrm{sgn}([Q]^{}_N)}{2}})
(f([-Q_0]_N^{\frac{1-\mathrm{sgn}([Q]^{}_N)}{2}}[Q]^{}_N))\\
&=&
\left\{\begin{array}{cl}
f([Q]^{}_N) & \textrm{if}~\mathrm{sgn}([Q]^{}_N)=1,\vspace{0.1cm}\\
\overline{f([-Q_0]^{}_N[Q]^{}_N)} & \textrm{if}~\mathrm{sgn}([Q]^{}_N)=-1.
\end{array}\right.
\end{eqnarray*}
\end{definition}

\begin{remark}\label{sgnsgn}
Observe that
\begin{equation*}
\mathrm{sgn}
([-Q_0]_N^\frac{1-\mathrm{sgn}([Q]^{}_N)}{2})=\mathrm{sgn}([Q]^{}_N)
\end{equation*}
and so
\begin{equation*}
\mathrm{sgn}([-Q_0]_N^\frac{1-\mathrm{sgn}([Q]^{}_N)}{2}[Q]^{}_N)=1
\end{equation*}
by Remark \ref{sgnproduct}.
\end{remark}

\begin{theorem}\label{pmQQ}
Let $Q,\,Q'\in\mathcal{Q}^\pm_N(d_K)$ and $f\in\mathcal{F}_N$.
If $f([Q]^{}_N)$ is finite, then
\begin{equation*}
\phi^\pm_N([Q']^{}_N)(f([Q]^{}_N))=
f([Q']^{}_N[Q]^{}_N).
\end{equation*}
\end{theorem}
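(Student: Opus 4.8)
The plan is to reduce everything to the already-established transformation formula for the extended form class invariants, namely Remark \ref{extendedQQ}, by systematically tracking the sign function through the definition of $f([Q]^{}_N)$ in Definition \ref{definvariant}. The key tool is that $\phi^\pm_N$ is a group homomorphism (Theorem \ref{C+-}) together with the multiplicativity of $\mathrm{sgn}$ recorded in Remark \ref{sgnproduct}(ii). I would split the argument into cases according to the value of $\mathrm{sgn}([Q']^{}_N)$, since the complex conjugation $\mathfrak{c}|_{K_{(N)}}$ enters precisely when a sign is $-1$.

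First I would set $s=\mathrm{sgn}([Q]^{}_N)$ and $s'=\mathrm{sgn}([Q']^{}_N)$, so that by Remark \ref{sgnproduct}(ii) the product class has sign $ss'$. Unravelling Definition \ref{definvariant}, the left-hand side is $\phi^\pm_N([Q']^{}_N)$ applied to $f([Q]^{}_N)$, where $f([Q]^{}_N)$ equals $f([Q]^{}_N)$ itself when $s=1$ and equals $\overline{f([-Q_0]^{}_N[Q]^{}_N)}$ when $s=-1$. Meanwhile the right-hand side $f([Q']^{}_N[Q]^{}_N)$ unpacks according to the sign $ss'$. When $s=s'=1$ both classes lie in $\mathcal{C}_N(d_K)$, $\phi^\pm_N$ restricts to $\phi^{}_N$, and the identity is exactly Remark \ref{extendedQQ}. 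The remaining cases are handled by inserting the factor $[-Q_0]^{}_N$ as needed and using that $\phi^\pm_N([-Q_0]^{}_N)=\mathfrak{c}|_{K_{(N)}}$ together with $[-Q_0]^{}_N[-Q_0]^{}_N=[Q_0]^{}_N$ from Remark \ref{sgnproduct}(i).

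The mechanism I would exploit repeatedly is that $\phi^\pm_N([Q']^{}_N)=\phi^{}_N([s'Q']^{}_N)\,\mathfrak{c}|_{K_{(N)}}^{(1-s')/2}$ by the defining formula (\ref{phi+-}), so applying $\phi^\pm_N([Q']^{}_N)$ to a real value (one coming from an element of $\mathcal{C}_N(d_K)$, which lands in $K_{(N)}$) either acts through the honest $\phi^{}_N$ or through $\phi^{}_N$ followed by $\mathfrak{c}$. Crucially, by Remark \ref{sgnsgn} the adjusted class $[-Q_0]_N^{(1-s)/2}[Q]^{}_N$ always has sign $1$, hence lies in $\mathcal{C}_N(d_K)$ and its invariant is a genuine element of $K_{(N)}$ on which $\mathfrak{c}$ and $\phi^{}_N$ interact in a controlled way. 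Each case then collapses to Remark \ref{extendedQQ} after pushing the conjugations past $\phi^{}_N$ using the relation $\mathfrak{c}\,\phi^{}_N([R]^{}_N)=\phi^{}_N([R]^{}_N)^{\,\mathfrak{c}}\,\mathfrak{c}$ encoded in the semidirect product structure of Lemma \ref{semi}.

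I expect the main obstacle to be bookkeeping rather than conceptual: one must verify that in every sign combination the inserted copies of $[-Q_0]^{}_N$ cancel correctly and that complex conjugation commutes past $\phi^{}_N$ in the right way. In particular the case $s=-1$, $s'=-1$ — where the product $[Q']^{}_N[Q]^{}_N$ has sign $+1$ but each factor carries a conjugation — requires checking that the two occurrences of $\mathfrak{c}$ compose to the identity on $K_{(N)}$ while simultaneously the form-class side loses both minus signs; this is where the relations $\mathfrak{c}^2=\mathrm{id}_{K_{(N)}}$ and $[-Q_0]^{}_N[-Q_0]^{}_N=[Q_0]^{}_N$ must be applied in tandem. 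Once these compatibilities are confirmed, the identity follows uniformly, and the theorem is nothing more than the assertion that $f([-Q_0]_N^{(1-s)/2}[Q]^{}_N)$ transforms correctly under $\phi^{}_N$, which is Remark \ref{extendedQQ}.
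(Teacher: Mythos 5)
Your plan is correct and is essentially the paper's own argument: the paper performs exactly this reduction to Remark \ref{extendedQQ} by inserting the powers $[-Q_0]_N^{(1-\mathrm{sgn})/2}$, invoking the homomorphism property of $\phi^\pm_N$ together with Remarks \ref{sgnproduct} and \ref{sgnsgn} so that the adjusted classes have sign $+1$, and then cancelling. The only difference is presentational — the paper writes a single uniform chain of equalities in exponent notation instead of splitting into cases on $\mathrm{sgn}([Q]^{}_N)$ and $\mathrm{sgn}([Q']^{}_N)$.
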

\begin{proof}
We derive that
\begin{eqnarray*}
&&\phi^\pm_N([Q']^{}_N)(f([Q]^{}_N))\\&=&
\phi^\pm_N([Q']^{}_N)
\left(\phi^\pm_N([-Q_0]_N^{\frac{1-\mathrm{sgn}([Q]^{}_N)}{2}})
(f([-Q_0]_N^{\frac{1-\mathrm{sgn}([Q]^{}_N)}{2}}[Q]^{}_N))\right)
\quad\textrm{by Definition \ref{definvariant}}\\
&=&\left(\phi^\pm_N([-Q_0]_N^{\frac{1-\mathrm{sgn}([Q']^{}_N[Q]^{}_N)}{2}})
\phi^\pm_N([-Q_0]_N^{\frac{1-\mathrm{sgn}([Q']^{}_N[Q]^{}_N)}{2}}[Q']^{}_N[-Q_0]_N^{\frac{1-\mathrm{sgn}([Q]^{}_N)}{2}})
\right)\\
&&
\left(f([-Q_0]_N^{\frac{1-\mathrm{sgn}([Q]^{}_N)}{2}}[Q]^{}_N))\right)\\
&&\hspace{4cm}\textrm{by the homomorphism property of $\phi^\pm_N$ and
Remark \ref{sgnproduct} (i)}\\
&=&\phi^\pm_N([-Q_0]_N^{\frac{1-\mathrm{sgn}([Q']^{}_N[Q]^{}_N)}{2}})\\
&&\left(f([-Q_0]_N^{\frac{1-\mathrm{sgn}([Q']^{}_N[Q]^{}_N)}{2}}[Q']^{}_N
[-Q_0]_N^{\frac{1-\mathrm{sgn}([Q]^{}_N)}{2}}[-Q_0]_N^{\frac{1-\mathrm{sgn}([Q]^{}_N)}{2}}[Q]^{}_N))\right)\\
&&\hspace{4cm}\textrm{by the fact}~
\mathrm{sgn}([-Q_0]_N^{\frac{1-\mathrm{sgn}([Q']^{}_N[Q]^{}_N)}{2}}[Q']^{}_N[-Q_0]_N^{\frac{1-\mathrm{sgn}([Q]^{}_N)}{2}})=1\\
&&\hspace{4cm}\textrm{obtained from Remarks \ref{sgnproduct} (ii)
and \ref{sgnsgn}, and by Remark \ref{extendedQQ}}\\
&=&\phi_N^\pm([-Q_0]_N^{\frac{1-\mathrm{sgn}([Q']^{}_N[Q]^{}_N)}{2}})
\left(
f([-Q_0]_N^{\frac{1-\mathrm{sgn}([Q']^{}_N[Q]^{}_N)}{2}}[Q']^{}_N[Q]^{}_N)
\right)\quad\textrm{by Remark \ref{sgnproduct} (i)}\\
&=&f([Q']^{}_N[Q]^{}_N)\quad\textrm{by Definition \ref{definvariant}}.
\end{eqnarray*}
\end{proof}

\section {Some subgroups of definite form class groups}

In this last section, we shall find subgroups
of $\mathcal{C}^\pm_N(d_K)$ which are isomorphic to
$\mathrm{Gal}(K_{(N)}/H_K)$,
$\mathrm{Gal}(K_{(N)}/\mathbb{Q}(j(\tau^{}_K)))$ and
$\mathrm{Gal}(K_{(N)}/\mathbb{Q}(\zeta^{}_N))$, respectively.
\par
First, observe that the principal form
$Q_0=x^2+b_Kxy+c_Ky^2$ induces $\omega^{}_{Q_0}=\tau^{}_K$.
Let
\begin{equation*}
\mathcal{C}_{0,\,N}(d_K)=\{[Q_0^\alpha]^{}_N~|~\alpha\in\mathrm{SL}_2(\mathbb{Z})
~\textrm{satisfies}~Q_0^\alpha\in\mathcal{Q}_N(d_K)\}
\quad(\subseteq\mathcal{C}_N(d_K)\subset\mathcal{C}^\pm_N(d_K)).
\end{equation*}

\begin{theorem}\label{overH}
We have
\begin{equation*}
\phi^\pm_N{}^{-1}(\mathrm{Gal}(K_{(N)}/H_K))=\phi_N^{-1}(\mathrm{Gal}(K_{(N)}/H_K))=
\mathcal{C}_{0,\,N}(d_K).
\end{equation*}
\end{theorem}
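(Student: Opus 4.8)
The plan is to establish the two equalities separately: the comparison of $\phi^\pm_N$ with $\phi^{}_N$ is formal, while the identification with $\mathcal{C}_{0,\,N}(d_K)$ requires the arithmetic of the Fricke-type invariant $j([Q]^{}_N)$.

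First I would dispose of the equality $\phi^\pm_N{}^{-1}(\mathrm{Gal}(K_{(N)}/H_K))=\phi_N^{-1}(\mathrm{Gal}(K_{(N)}/H_K))$. Since $K\subseteq H_K\subseteq K_{(N)}$, the group $\mathrm{Gal}(K_{(N)}/H_K)$ sits inside $\mathrm{Gal}(K_{(N)}/K)$. By the formula (\ref{phi+-}), a class $[Q]^{}_N$ with $\mathrm{sgn}(Q)=-1$ maps into the nontrivial coset $\mathrm{Gal}(K_{(N)}/K)\,\mathfrak{c}|_{K_{(N)}}$, whereas for $\mathrm{sgn}(Q)=1$ one has $\phi^\pm_N([Q]^{}_N)=\phi^{}_N([Q]^{}_N)\in\mathrm{Gal}(K_{(N)}/K)$. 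Hence any class in $\phi^\pm_N{}^{-1}(\mathrm{Gal}(K_{(N)}/H_K))$ must be positive definite, i.e.\ lie in $\mathcal{C}_N(d_K)$, and on $\mathcal{C}_N(d_K)$ the two maps agree; this yields the first equality at once.

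For the second equality I would use $H_K=K(j(\tau^{}_K))$ from (\ref{generationHilbert}). A class $[Q]^{}_N\in\mathcal{C}_N(d_K)$ lies in $\phi_N^{-1}(\mathrm{Gal}(K_{(N)}/H_K))$ exactly when $\phi^{}_N([Q]^{}_N)$ fixes $j(\tau^{}_K)$. Applying the description of $\phi^{}_N$ in Proposition \ref{extended} to $f=j$ and recalling $j(\tau^{}_K)=j([Q_0]^{}_N)$, this says $j([Q]^{}_N)=j(\tau^{}_K)$. Next I would compute $j([Q]^{}_N)$ via Definition \ref{+invariant}: since $j\in\mathcal{F}_1$ is fixed by all of $\mathrm{Gal}(\mathcal{F}_N/\mathcal{F}_1)$, we have $j^{\gamma_Q^{-1}}=j$, so $j([Q]^{}_N)=j(-\overline{\omega}^{}_Q)$. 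The elementary identity to verify is that, writing $Q=ax^2+bxy+cy^2$ and $\overline{Q}=ax^2-bxy+cy^2$, one has $-\overline{\omega}^{}_Q=\omega^{}_{\overline{Q}}$ (because $\sqrt{d_K}$ is purely imaginary), with $\overline{Q}\in\mathcal{Q}_N(d_K)$.

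Finally I would turn the scalar equation $j(\omega^{}_{\overline{Q}})=j(\omega^{}_{Q_0})$ into an equivalence of forms. By Lemma \ref{jLemma} it holds iff $\omega^{}_{\overline{Q}}$ and $\omega^{}_{Q_0}$ lie in one $\mathrm{SL}_2(\mathbb{Z})$-orbit, which by (\ref{wQequal}) and (\ref{wQgamma}) means $\overline{Q}\sim Q_0$ under the full modular group (proper equivalence). Since $Q\mapsto\overline{Q}$ is an involution on proper equivalence classes and the principal form satisfies $\overline{Q_0}\sim Q_0$, this is equivalent to $Q\sim Q_0$, i.e.\ $Q=Q_0^\alpha$ for some $\alpha\in\mathrm{SL}_2(\mathbb{Z})$; as $Q\in\mathcal{Q}_N(d_K)$, this is precisely the defining condition for $[Q]^{}_N\in\mathcal{C}_{0,\,N}(d_K)$, and conversely every generator $[Q_0^\alpha]^{}_N$ satisfies $j([Q_0^\alpha]^{}_N)=j(\tau^{}_K)$. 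I expect the main obstacle to be the clean justification of $j([Q]^{}_N)=j(\omega^{}_{\overline{Q}})$ together with the passage from equality of $j$-values to proper equivalence and the reduction $\overline{Q}\sim Q_0\Leftrightarrow Q\sim Q_0$; once these are in place the remaining inclusions are immediate.
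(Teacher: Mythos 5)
Your proposal is correct and follows essentially the same route as the paper: reduce membership to the single equation $j(-\overline{\omega}^{}_Q)=j(\tau^{}_K)$ via Proposition \ref{extended} and $H_K=K(j(\tau^{}_K))$, then use Lemma \ref{jLemma} together with (\ref{wQequal}) and (\ref{wQgamma}) to convert equality of $j$-values into $Q=Q_0^\alpha$. The only cosmetic difference is that you handle the complex conjugation by passing to the conjugate form $ax^2-bxy+cy^2$ and using that $Q\mapsto\overline{Q}$ is an involution fixing the class of $Q_0$, whereas the paper conjugates the fractional linear transformation directly to turn $\gamma(-\overline{\omega}^{}_Q)=-\overline{\tau}^{}_K$ into $\alpha(\omega^{}_Q)=\tau^{}_K$; you also spell out the first equality (positive definiteness of any class landing in $\mathrm{Gal}(K_{(N)}/K)$), which the paper leaves implicit.
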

\begin{proof}
We deduce that for $Q\in\mathcal{Q}_N(d_K)$
\begin{eqnarray*}
[Q]^{}_N\in\phi_N^{-1}(\mathrm{Gal}(K_{(N)}/H_K))
&\Longleftrightarrow&\phi^{}_N([Q]^{}_N)|_{H_K}=\mathrm{id}_{H_K}\\
&\Longleftrightarrow&\phi^{}_N([Q]^{}_N)(j(\tau^{}_K))=j(\tau^{}_K)
\quad\textrm{by (\ref{generationHilbert})}\\
&\Longleftrightarrow&
j(-\overline{\omega}^{}_Q)=j(\tau^{}_K)=j(-\overline{\tau}^{}_K)
\quad\textrm{by Proposition \ref{extended},}\\
&&\textrm{the facts $j(\tau)\in\mathcal{F}_1$ and $-\overline{\tau}^{}_K=\tau^{}_K+b_K$}\\
&\Longleftrightarrow&\gamma(-\overline{\omega}^{}_Q)=-\overline{\tau}^{}_K
\quad\textrm{for some}~\gamma\in\mathrm{SL}_2(\mathbb{Z})~\textrm{by Lemma \ref{jLemma}}\\
&\Longleftrightarrow&\alpha(\omega_Q)=\tau^{}_K\quad\textrm{for some}~\alpha\in\mathrm{SL}_2(\mathbb{Z})\\
&\Longleftrightarrow&Q=Q_0^\alpha\quad\textrm{for some}~\alpha\in\mathrm{SL}_2(\mathbb{Z})
\quad\textrm{by (\ref{wQequal}) and (\ref{wQgamma})}.
\end{eqnarray*}
Hence we conclude that $\phi_N^{-1}(\mathrm{Gal}(K_{(N)}/H_K))=\mathcal{C}_{0,\,N}(d_K)$.
\end{proof}

\begin{remark}
Now, we have shown by Proposition \ref{reciprocity}
and Corollary \ref{overH} that
\begin{equation*}
\mathcal{C}_{0,\,N}(d_K)
~\simeq~W_{K,\,N}/r^{}_N(U_K).
\end{equation*}
Here we shall present an explicit isomorphism
of $\mathcal{C}_{0,\,N}(d_K)$ onto $W_{K,\,N}/r^{}_N(U_K)$ as follows $\colon$
let $[Q_0^\alpha]\in\mathcal{C}_{0,\,N}(d_K)$ with $\alpha
=\begin{bmatrix}A&B\\C&D\end{bmatrix}\in\mathrm{SL}_2(\mathbb{Z})$
satisfying $Q_0^\alpha\in\mathcal{Q}_N(d_K)$.
If we write $Q_0^\alpha=ax^2+bxy+cy^2$, then we get
\begin{equation}\label{abc}
\left\{\begin{array}{ccl}
a&=&A^2+b_KAC+c_KC^2~=~Q_0(A,\,C),\\
b&=&2AB+b_K(AD+BC)+2c_KCD,\\
c&=&B^2+b_KBD+c_KD^2.
\end{array}\right.
\end{equation}
Furthermore, we obtain
by (\ref{wQgamma}) and the fact $-\overline{\tau}^{}_K=\tau^{}_K+b_K$ that
\begin{equation}\label{-omegabar}
-\overline{\omega}^{}_{Q_0^\alpha}
=-\overline{\alpha^{-1}(\omega^{}_{Q_0})}
=-\overline{\alpha^{-1}(\tau^{}_K)}=
\frac{D(-\overline{\tau}^{}_K)+B}{C(-\overline{\tau}^{}_K)+A}
=\begin{bmatrix}D&B\\C&A\end{bmatrix}\begin{bmatrix}1&b_K\\0&1\end{bmatrix}(\tau^{}_K).
\end{equation}
We then achieve by Proposition \ref{extended} that
for any $f\in\mathcal{F}_N$ which is finite at $\tau^{}_K$
\begin{eqnarray*}
\phi^{}_N([Q_0^\alpha]^{}_N)(f(\tau^{}_K))&=&f^{\left[\begin{smallmatrix}
1&\frac{b+b_K}{2}\\0&a
\end{smallmatrix}\right]^{-1}}(-\overline{\omega}^{}_{Q_0^\alpha})
\quad\textrm{by (\ref{+invariant})}\\
&=&f^{\left[\begin{smallmatrix}
1&\frac{b+b_K}{2}\\0&a
\end{smallmatrix}\right]^{-1}}
\left(\begin{bmatrix}D&B\\C&A\end{bmatrix}\begin{bmatrix}1&b_K\\0&1\end{bmatrix}(\tau^{}_K)\right)
\quad\textrm{by (\ref{-omegabar})}\\
&=&f^{
\left[\begin{smallmatrix}1 & \frac{b+b_K}{2}\\0&a
\end{smallmatrix}\right]^{-1}
\left[\begin{smallmatrix}D & B\\C&A
\end{smallmatrix}\right]
\left[\begin{smallmatrix}1 & b_K\\0&1
\end{smallmatrix}\right]}(\tau^{}_K)
\quad\textrm{by (\ref{composition})}\\
&=&f^{Q_0(A,\,C)^{-1}\left[\begin{smallmatrix}
A&-c_KC\\C&A+b_KC
\end{smallmatrix}\right]}(\tau^{}_K)\quad\textrm{by (\ref{abc}) and the fact
$AD-BC=1$}.
\end{eqnarray*}
In the above, $Q_0(A,\,C)^{-1}$ means the inverse of $Q_0(A,\,C)$ in $(\mathbb{Z}/N\mathbb{Z})^\times$.
Observe that
if we let $s=Q_0(A,\,C)^{-1}C$ and $t=Q_0(A,\,C)^{-1}(A+b_KC)$, then
\begin{equation*}
\begin{bmatrix}
t-b_Ks & -c_Ks\\s&t
\end{bmatrix}=
Q_0(A,\,C)^{-1}\begin{bmatrix}A & -c_KC\\C&A+b_KC\end{bmatrix}
\quad(\in W_{K,\,N}).
\end{equation*}
Therefore we establish the desired isomorphism
\begin{equation*}
\begin{array}{ccc}
\mathcal{C}_{0,\,N}(d_K)&\stackrel{\sim}{\rightarrow}&W_{K,\,N}/r^{}_N(U_K)
\vspace{0.1cm}\\
\left[
Q_0^{\left[\begin{smallmatrix}
A&B\\C&D\end{smallmatrix}\right]}\right]^{}_N
& \mapsto&
\left[Q_0(A,\,C)^{-1}\begin{bmatrix}A & -c_KC\\C&A+b_KC\end{bmatrix}\right]
\end{array}
\end{equation*}
by Proposition \ref{reciprocity}.
\end{remark}

\begin{corollary}
We derive
\begin{equation*}
\phi^\pm_N{}^{-1}(\mathrm{Gal}(K_{(N)}/\mathbb{Q}(j(\tau^{}_K))))=
\{[Q_0^\alpha]^{}_N,\,[-Q_0]^{}_N~|~\alpha\in\mathrm{SL}_2(\mathbb{Z})~
\textrm{satisfies}~Q_0^\alpha\in\mathcal{Q}_N(d_K)\}.
\end{equation*}
\end{corollary}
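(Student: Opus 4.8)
The plan is to reduce the statement to the index-$2$ inclusion $\mathrm{Gal}(K_{(N)}/H_K)\subseteq\mathrm{Gal}(K_{(N)}/\mathbb{Q}(j(\tau^{}_K)))$ and then transport it through the isomorphism $\phi^\pm_N$, invoking Theorem \ref{overH} for the subgroup and Theorem \ref{C+-} for the remaining coset. Since $K_{(N)}/\mathbb{Q}$ is Galois by Lemma \ref{semi} and $\mathbb{Q}(j(\tau^{}_K))\subseteq H_K\subseteq K_{(N)}$, all the Galois groups in question are genuine subgroups of $\mathrm{Gal}(K_{(N)}/\mathbb{Q})$, which is what lets us pull back along $\phi^\pm_N$.

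First I would set up the tower $\mathbb{Q}(j(\tau^{}_K))\subseteq H_K\subseteq K_{(N)}$. By (\ref{generationHilbert}) we have $H_K=K(j(\tau^{}_K))=\mathbb{Q}(\sqrt{d_K},\,j(\tau^{}_K))$. Because $j(\tau^{}_K)\in\mathbb{R}$ while $\sqrt{d_K}$ is purely imaginary, $\sqrt{d_K}\notin\mathbb{Q}(j(\tau^{}_K))$, so $[H_K:\mathbb{Q}(j(\tau^{}_K))]=2$ and hence $\mathrm{Gal}(K_{(N)}/H_K)$ has index $2$ in $\mathrm{Gal}(K_{(N)}/\mathbb{Q}(j(\tau^{}_K)))$. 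Next I would locate $\mathfrak{c}$: since $j(\tau^{}_K)$ is real, $\mathfrak{c}|_{K_{(N)}}$ fixes $\mathbb{Q}(j(\tau^{}_K))$ and thus lies in $\mathrm{Gal}(K_{(N)}/\mathbb{Q}(j(\tau^{}_K)))$, whereas $\mathfrak{c}$ sends $\sqrt{d_K}$ to $-\sqrt{d_K}$, so $\mathfrak{c}|_{K_{(N)}}\notin\mathrm{Gal}(K_{(N)}/H_K)$. As the latter has index $2$, its two cosets partition the larger group and the nontrivial one is represented by $\mathfrak{c}|_{K_{(N)}}$, giving
\begin{equation*}
\mathrm{Gal}(K_{(N)}/\mathbb{Q}(j(\tau^{}_K)))=\mathrm{Gal}(K_{(N)}/H_K)\,\cup\,\mathfrak{c}|_{K_{(N)}}\cdot\mathrm{Gal}(K_{(N)}/H_K).
\end{equation*}

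Finally I would apply $\phi^\pm_N{}^{-1}$. Being a group isomorphism, $\phi^\pm_N$ carries the displayed coset decomposition to a coset decomposition in $\mathcal{C}^\pm_N(d_K)$: by Theorem \ref{overH} the subgroup pulls back to $\mathcal{C}_{0,N}(d_K)=\{[Q_0^\alpha]^{}_N\mid Q_0^\alpha\in\mathcal{Q}_N(d_K)\}$, and by Theorem \ref{C+-} we have $\phi^\pm_N{}^{-1}(\mathfrak{c}|_{K_{(N)}})=[-Q_0]^{}_N$, whence the homomorphism property gives $\phi^\pm_N{}^{-1}(\mathfrak{c}|_{K_{(N)}}\cdot\mathrm{Gal}(K_{(N)}/H_K))=[-Q_0]^{}_N\cdot\mathcal{C}_{0,N}(d_K)$. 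Therefore the preimage equals $\mathcal{C}_{0,N}(d_K)\cup[-Q_0]^{}_N\mathcal{C}_{0,N}(d_K)$, which is exactly the set asserted in the statement.

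The computational steps are short once Theorems \ref{overH} and \ref{C+-} are in hand; the point needing genuine care is the realness/degree argument showing $\mathfrak{c}|_{K_{(N)}}$ lies in $\mathrm{Gal}(K_{(N)}/\mathbb{Q}(j(\tau^{}_K)))$ but not in $\mathrm{Gal}(K_{(N)}/H_K)$, together with the identification of $[-Q_0]^{}_N\mathcal{C}_{0,N}(d_K)$ with the negative-definite classes in the statement (which follows from $-(Q_0^\alpha)=(-Q_0)^\alpha$ and the fact that $\phi^\pm_N$ is a homomorphism). Note that this argument does not require $K\neq\mathbb{Q}(\sqrt{-1}),\mathbb{Q}(\sqrt{-3})$, consistent with the standing convention of this section.
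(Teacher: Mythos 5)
Your proposal is correct and follows essentially the same route as the paper's proof: both identify $\mathfrak{c}|_{K_{(N)}}$ with $[-Q_0]^{}_N$ via Theorem \ref{C+-}, use the realness of $j(\tau^{}_K)$ to place it in the fixing group, compute the index $[K(j(\tau^{}_K)):\mathbb{Q}(j(\tau^{}_K))]=2$, and invoke Theorem \ref{overH} to conclude that the preimage is $\mathcal{C}_{0,\,N}(d_K)$ together with the coset $[-Q_0]^{}_N\,\mathcal{C}_{0,\,N}(d_K)$. Your explicit coset decomposition on the Galois side and the remark identifying $[-Q_0]^{}_N\,\mathcal{C}_{0,\,N}(d_K)$ with the classes $[-(Q_0^\alpha)]^{}_N$ are just a slightly more spelled-out version of the same argument.
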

\begin{proof}
We find by Theorem \ref{C+-} and the fact $j(\tau^{}_K)\in\mathbb{R}$ that
\begin{equation*}
\phi^\pm_N([-Q_0]^{}_N)(j(\tau^{}_K))=\mathfrak{c}(j(\tau^{}_K))=j(\tau^{}_K),
\end{equation*}
which shows that $[-Q_0]^{}_N$ is contained in
$\phi^\pm_N{}^{-1}(\mathrm{Gal}(K_{(N)}/\mathbb{Q}(j(\tau^{}_K)))$.
Moreover, since
$[-Q_0]^{}_N$ does not belong to $\mathcal{C}_{0,\,N}(d_K)$ and
\begin{equation*}
[\mathrm{Gal}(K_{(N)}/\mathbb{Q}(j(\tau^{}_K))):
\mathrm{Gal}(K_{(N)}/H_K)]=[K(j(\tau^{}_K)):\mathbb{Q}(j(\tau^{}_K))]=2,
\end{equation*}
we conclude by Theorem \ref{overH} that
\begin{equation*}
\phi^\pm_N{}^{-1}(\mathrm{Gal}(K_{(N)}/\mathbb{Q}(j(\tau^{}_K))))=
\{[Q_0^\alpha]^{}_N,\,[-Q_0]^{}_N~|~\alpha\in\mathrm{SL}_2(\mathbb{Z})~
\textrm{satisfies}~Q_0^\alpha\in\mathcal{Q}_N(d_K)\}.
\end{equation*}
\end{proof}

\begin{theorem}\label{aQ1}
We get
\begin{equation*}
\phi^\pm_N{}^{-1}(\mathrm{Gal}(K_{(N)}/\mathbb{Q}(\zeta^{}_N)))=
\{[Q]^{}_N~|~Q\in\mathcal{Q}^\pm_N(d_K)~
\mathrm{satisfies}~a_Q\equiv1\Mod{N}\}
\end{equation*}
where $Q=a_Qx^2+b_Qxy+c_Qy^2$.
\end{theorem}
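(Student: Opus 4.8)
The plan is to reduce the statement to a single computation: how the automorphism $\phi^\pm_N([Q]^{}_N)$ acts on the root of unity $\zeta_N$. First I would record that $\mathbb{Q}(\zeta_N)\subseteq K_{(N)}$, since the constant function $\zeta_N$ lies in $\mathcal{F}_N$ and is trivially finite at $\tau^{}_K$ with value $\zeta_N$, so $\zeta_N\in K_{(N)}$ by (\ref{generationray}). As $\zeta_N$ generates $\mathbb{Q}(\zeta_N)$ over $\mathbb{Q}$, an element $\sigma\in\mathrm{Gal}(K_{(N)}/\mathbb{Q})$ lies in $\mathrm{Gal}(K_{(N)}/\mathbb{Q}(\zeta_N))$ exactly when $\sigma(\zeta_N)=\zeta_N$. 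Since $\phi^\pm_N$ is an isomorphism (Theorem \ref{C+-}), it then suffices to decide, for each $Q\in\mathcal{Q}^\pm_N(d_K)$, precisely when $\phi^\pm_N([Q]^{}_N)$ fixes $\zeta_N$.

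The key step is the claim that $\phi^\pm_N([Q]^{}_N)(\zeta_N)=\zeta_N^{a_Q^{-1}}$, where $a_Q^{-1}$ is the inverse of $a_Q$ modulo $N$ (which exists because $\gcd(a_Q,N)=1$). I would establish this first for positive definite $Q$, where $\phi^\pm_N([Q]^{}_N)=\phi^{}_N([Q]^{}_N)$ and Definition \ref{+invariant} gives $\zeta_N([Q]^{}_N)=\zeta_N^{\gamma_Q^{-1}}(-\overline{\omega}^{}_Q)$ with $\gamma^{}_Q=\left[\begin{smallmatrix}1&\frac{b+b_K}{2}\\0&a\end{smallmatrix}\right]$. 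Because $\zeta_N$ is a \emph{constant} modular function, the substitution part of the $\mathrm{GL}_2(\mathbb{Z}/N\mathbb{Z})/\{\pm I_2\}$-action (the $\mathrm{SL}_2$-factor acting by $f\mapsto f\circ\gamma$ as in (\ref{composition})) leaves it unchanged, so only the action on Fourier coefficients through the $G_N$-factor survives; that factor sends the constant coefficient $\zeta_N$ to $\zeta_N^{\det}$. Since $\det\gamma_Q^{-1}=a_Q^{-1}$ and evaluation of a constant at $-\overline{\omega}^{}_Q$ is immaterial, this yields $\phi^{}_N([Q]^{}_N)(\zeta_N)=\zeta_N^{a_Q^{-1}}$.

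For negative definite $Q$ I would invoke the description of $\phi^\pm_N$ in (\ref{phi+-}): since $\mathrm{sgn}(Q)=-1$, one has $\phi^\pm_N([Q]^{}_N)=\phi^{}_N([-Q]^{}_N)\,\mathfrak{c}|_{K_{(N)}}$, where $-Q$ is positive definite with $a_{-Q}=-a_Q$. Applying this to $\zeta_N$, using $\mathfrak{c}(\zeta_N)=\zeta_N^{-1}$ together with the positive definite case, gives $\phi^\pm_N([Q]^{}_N)(\zeta_N)=\phi^{}_N([-Q]^{}_N)(\zeta_N^{-1})=\zeta_N^{-a_{-Q}^{-1}}=\zeta_N^{a_Q^{-1}}$, so the same formula holds for every $Q\in\mathcal{Q}^\pm_N(d_K)$. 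Consequently $\phi^\pm_N([Q]^{}_N)$ fixes $\zeta_N$ if and only if $a_Q^{-1}\equiv1\Mod{N}$, equivalently $a_Q\equiv1\Mod{N}$; applying $\phi^\pm_N{}^{-1}$ to $\mathrm{Gal}(K_{(N)}/\mathbb{Q}(\zeta_N))$ then yields exactly the asserted set. The main obstacle is the middle step: one must argue carefully that on the constant function $\zeta_N$ the $\mathrm{SL}_2$-factor of the Galois action is trivial while the remaining $G_N$-factor contributes precisely $\zeta_N\mapsto\zeta_N^{\det}$, so that only $\det\gamma^{}_Q=a_Q$ enters. The sign bookkeeping in the negative definite case (whereby $a_{-Q}=-a_Q$ produces the same exponent) is then routine.
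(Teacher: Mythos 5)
Your proposal is correct and follows essentially the same route as the paper: both reduce the statement to the computation $\phi^\pm_N([Q]^{}_N)(\zeta^{}_N)=\zeta_N^{a_Q^{-1}}$ via the decomposition (\ref{phi+-}), Definition \ref{+invariant} and Proposition \ref{extended}, and then identify the preimage as the kernel of the induced map to $\mathrm{Gal}(\mathbb{Q}(\zeta^{}_N)/\mathbb{Q})$. Your only addition is to spell out why the constant $\zeta^{}_N$ transforms under $[\gamma_Q^{-1}]$ by $\zeta^{}_N\mapsto\zeta_N^{\det\gamma_Q^{-1}}=\zeta_N^{a_Q^{-1}}$, a detail the paper leaves to the cited references, and your sign bookkeeping in the negative definite case agrees with the paper's.
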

\begin{proof}
Observe that for $Q=a_Qx^2+b_Qxy+c_Qy^2\in\mathcal{Q}^\pm_N(d_K)$,
\begin{eqnarray*}
\phi^\pm_N([Q]^{}_N)(\zeta^{}_N)
&=&(\phi^{}_N([\mathrm{sgn}(Q)Q]^{}_N)\,
\mathfrak{c}|_{K_{(N)}}^\frac{1-\mathrm{sgn}(Q)}{2})(\zeta^{}_N)
\quad\textrm{by the definition of $\phi^\pm_N$ described in (\ref{phi+-})}\\
&=&\phi^{}_N([\mathrm{sgn}(Q)Q]^{}_N)(\zeta_N^{\mathrm{sgn}(Q)})\\
&=&\zeta_N^{a_Q^{-1}}\quad\textrm{by Definition \ref{+invariant} and Proposition \ref{extended}}.
\end{eqnarray*}
Here, $a_Q^{-1}$ indicates the inverse of $a_Q$ in $(\mathbb{Z}/N\mathbb{Z})^\times$.
Thus we obtain the restriction homomorphism of $\phi^\pm_N$
\begin{eqnarray*}
c^{}_N~:~\mathcal{C}^\pm_N(d_K) &\rightarrow & \mathrm{Gal}(\mathbb{Q}(\zeta^{}_N)/
\mathbb{Q})\\
\mathrm{[}Q\mathrm{]}^{}_N~~ & \mapsto &
\left(
\zeta^{}_N\mapsto\zeta_N^{a_Q^{-1}}\right),
\end{eqnarray*}
from which we achieve that
\begin{equation*}
\phi^\pm_N{}^{-1}(\mathrm{Gal}(K_{(N)}/\mathbb{Q}(\zeta^{}_N)))=
\mathrm{Ker}(c^{}_N)=
\{[Q]^{}_N~|~\mathcal{Q}\in\mathcal{Q}^\pm_N(d_K)~
\textrm{satisfies}~a_Q\equiv1\Mod{N}\}.
\end{equation*}
\end{proof}

\begin{remark}
Let $N$ and $M$ be positive integers such that $N\,|\,M$.
By Proposition \ref{extended} and the proof of
Theorem \ref{C+-}, we have the commutative diagram in Figure \ref{N|M}
whose first vertical homomorphism is
\begin{equation*}
\begin{array}{ccc}
\mathcal{C}^\pm_M(d_K) & \rightarrow & \mathcal{C}^\pm_N(d_K)\\
\mathrm{[}Q\mathrm{]}^{}_M & \mapsto & [Q]^{}_N.
\end{array}
\end{equation*}
Note that the compositions of horizontal homomorphisms are
$c^{}_M$ and $c^{}_N$, respectively.
Therefore, for each prime $p$
we derive the surjection
\begin{equation*}
\begin{array}{ccc}
\displaystyle\varprojlim_{n\geq1}\mathcal{C}^\pm_{p^n}(d_K) & \rightarrow & \mathbb{Z}_p^\times\\
([Q_1]^{}_{p},\,[Q_2]^{}_{p^2},\,\ldots) & \mapsto &
\left(\displaystyle\lim_{n\rightarrow\infty}a^{}_{Q_n}\right)^{-1}.
\end{array}
\end{equation*}

\begin{figure}[h]
\begin{equation*}
\xymatrixcolsep{2pc}\xymatrix{
~~~\mathcal{C}^\pm_M(d_K)~~~ \ar[r]^{\phi^\pm_M~~~} \ar[d]
& ~~~\mathrm{Gal}(K_{(M)}/\mathbb{Q})~~~ \ar[r]^{\textrm{restriction}}
\ar[d]^{\textrm{restriction}}
& ~~~\mathrm{Gal}(\mathbb{Q}(\zeta^{}_M)/\mathbb{Q})~~~ \ar[d]^{\textrm{restriction}}\\
~~~\mathcal{C}^\pm_N(d_K)~~~ \ar[r]^{\phi^\pm_N~~~}
& ~~~\mathrm{Gal}(K_{(N)}/\mathbb{Q})~~~
\ar[r]^{\textrm{restriction}}
& ~~~\mathrm{Gal}(\mathbb{Q}(\zeta^{}_N)/\mathbb{Q})~~~
}
\end{equation*}
\caption{A commutative diagram of homomorphisms for $N\,|\,M$}\label{N|M}
\end{figure}

\end{remark}

\section*{Acknowledgement}

The authors would like to thank the referee
for all valuable comments and suggestions,
which helped them to improve the earlier version of the manuscript.
\par
The first named author was supported by the
research fund of Dankook university in 2021 and by the
National Research Foundation of Korea(NRF) grant funded by the Korea government(MSIT) (No. 2020R1F1A1A01073055).
The third named author was supported
by Hankuk University of Foreign Studies Research Fund of 2021 and by the National Research Foundation of Korea(NRF) grant funded by the Korea government(MSIT) (No. 2020R1F1A1A01048633).
The fourth named author was supported by the National Research Foundation of Korea(NRF) grant funded by the Korea government(MSIT) (No. 2020R1C1C1A01006139).

\bibliographystyle{amsplain}

\begin{thebibliography}{99}

\bibitem {Bhargava} Manjul Bhargava, \textit{Higher composition laws I: A new view on Gauss composition, and quadratic generalizations}, Ann. of Math. (2) \textbf{159} (2004), no. 1, 217--250.

\bibitem {B-C} A. Bourdon and P. L. Clark,
\textit{Torsion points and Galois representations on CM elliptic curves},
Pacific J. Math. 305 (2020), no. 1, 43--88.

\bibitem {B-C2} A. Bourdon and P. L. Clark,
\textit{Torsion points and isogenies on CM elliptic curves},
J. Lond. Math. Soc. (2) 102 (2020), no. 2, 580--622.

\bibitem {B-C-P} A. Bourdon, P. L. Clark and P. Pollack,
\textit{Anatomy of torsion in the CM case}, Math. Z. 285
(2017), no. 3--4, 795--820.

\bibitem {B-C-S} A. Bourdon, P. L. Clark and J. Stankewicz,
\textit{Torsion points on CM elliptic curves over real number fields}, Trans. Amer. Math. Soc. 369 (2017), no. 12, 8457--8496.

\bibitem {C-P} P. L. Clark and P. Pollack,
\textit{The truth about torsion in the CM case, II}, Q. J. Math. 68
(2017), no. 4, 1313--1333.

\bibitem {Cox} D. A. Cox, \textit{Primes of the Form $x^2+ny^2$: Fermat, Class field theory, and Complex Multiplication}, 2nd ed., Pure and Applied Mathematics (Hoboken), John Wiley \& Sons, Inc., Hoboken, NJ, 2013.

\bibitem {Deuring} M. Deuring, \textit{Die Klassenk\"{o}rper der Komplexen Multiplikation},
Enzyklop\"{a}die der mathematischen Wissenschaften:
Mit Einschluss ihrer Anwendungen, Band I 2, Heft 10, Teil II (Article I 2, 23),
B. G. Teubner Verlagsgesellschaft, Stuttgart, 1958.


\bibitem {Dirichlet} P. G. L. Dirichlet, \textit{Zahlentheorie}, 4th edition, Vieweg, Braunschweig, 1894.

\bibitem {E-K-S} I. S. Eum, J. K. Koo and D. H. Shin, \textit{Binary quadratic forms and ray class groups},
Proc. Roy. Soc. Edinburgh Sect. A 150 (2020), no. 2, 695--720.

\bibitem {F-L-R} G. Frei, F. Lemmermeyer and P. J. Roquette, \textit{Emil Artin and Helmut Hasse-the Correspondence 1923--1958}, Contributions in Mathematical and Computational Sciences, 5. Springer, Heidelberg, 2014.

\bibitem {Gauss} C. F. Gauss, \textit{Disquisitiones Arithmeticae}, Leipzig, 1801.

\bibitem {Hasse} H. Hasse, \textit{Neue Begr\"{u}ndung der
komplexen Multiplikation I, II}, J. Reine Angew. Math. 157 (1927), 115--139,
165 (1931), 64--88.

\bibitem {Janusz} G. J. Janusz, \textit{Algebraic Number Fields},
2nd ed., Grad. Studies in Math. 7, Amer. Math. Soc., Providence,
R. I., 1996.

\bibitem {J-K-S17} H. Y. Jung, J. K. Koo and D. H. Shin,
\textit{Primitive and totally primitive Fricke families with applications}, Results Math. 71 (2017),
no. 3--4, 841--858.

\bibitem {J-K-S18} H. Y. Jung, J. K. Koo and D. H. Shin,
\textit{Generation of ray class fields modulo $2$, $3$, $4$ or $6$ by using the Weber function},
J. Korean Math. Soc. 55 (2018), no. 2, 343--372.

\bibitem {J-K-S19} H. Y. Jung, J. K. Koo and D. H. Shin,
\textit{Primitive and totally primitive Fricke families with applications (II)},
J. Math. Anal. Appl. 472 (2019), no. 1, 432--446.

\bibitem {K-L} D. Kubert and S. Lang, \textit{Modular Units}, Grundlehren der mathematischen Wissenschaften 244, Spinger-Verlag, New York-Berlin, 1981.

\bibitem {K-S} J. K. Koo and D. H. Shin,
\textit{Construction of class fields over imaginary quadratic fields using $y$-coordinates of elliptic curves}, J. Korean Math. Soc. 50 (2013), no. 4, 847--864.

\bibitem {K-S-Y} J. K. Koo, D. H. Shin and D. S. Yoon,
\textit{On a problem of Hasse and Ramachandra}, Open Math. 17 (2019), no. 1, 131--140.

\bibitem {Lang87} S. Lang, \textit{Elliptic Functions}, With an appendix by J. Tate, 2nd ed., Grad. Texts in Math. 112, Spinger-Verlag, New York, 1987.

\bibitem {Lang94} S. Lang, \textit{Algebraic Number Theory}, 2nd ed., Springer-Verlag,
New York, 1994.

\bibitem {Lang02} S. Lang, \textit{Algebra}, 3rd ed.,
Grad. Texts in Math. 211, Springer-Verlag, New York, 2002.

\bibitem {Lozano-Robledo} \'{A}. Lozano-Robledo, \textit{Galois representations attached to elliptic curves with complex multiplication}, https://arxiv.org/abs/1809.02584.

\bibitem {Ramachandra}
K. Ramachandra, \textit{Some applications of Kronecker's limit
formula}, Ann. of Math. (2) 80 (1964), 104--148.

\bibitem {Serre} J.-P. Serre,
\textit{Propri\'{e}t\'{e}s galoisiennes des points d'ordre fini des courbes elliptiques},
Invent. Math. 15 (1972), no. 4, 259--331.

\bibitem {Shimura} G. Shimura, \textit{Introduction to the Arithmetic Theory of Automorphic Functions}, Iwanami Shoten and Princeton University
Press, Princeton, N. J., 1971.

\bibitem {Siegel} C. L. Siegel,
\textit{Lectures on Advanced Analytic Number Theory},
Notes by S. Raghavan, Tata Institute of Fundamental Research Lectures on Mathematics 23, Tata Institute of Fundamental Research, Bombay 1965.

\bibitem {Silverman94} J. H. Silverman, \textit{Advanced Topics in the Arithmetic of Elliptic Curves}, Grad. Texts in Math. 151, Springer-Verlag, New York, 1994.

\bibitem {Silverman09} J. H. Silverman, \textit{The Arithmetic of Elliptic Curves}, 2nd ed., Grad. Texts in Math. 106, Springer, Dordrecht, 2009.

\bibitem {Stevenhagen} P. Stevenhagen, \textit{Hilbert's 12th problem, complex multiplication and Shimura reciprocity},
Class field theory—its centenary and prospect (Tokyo, 1998), 161--176, Adv. Stud. Pure Math. 30, Math. Soc. Japan, Tokyo, 2001.

\bibitem {Yoon} D. S. Yoon, \textit{Ray class invariants
in terms of extended form class groups},
East Asian Math. J. 37 (2021), no. 1, 87--95.

\end{thebibliography}

\address{% First author
Department of Mathematics\\
Dankook University\\
Cheonan-si, Chungnam 31116\\
Republic of Korea} {hoyunjung@dankook.ac.kr}
\address{
Department of Mathematical Sciences \\
KAIST \\
Daejeon 34141\\
Republic of Korea} {jkgoo@kaist.ac.kr}
\address{% Corresponding author
Department of Mathematics\\
Hankuk University of Foreign Studies\\
Yongin-si, Gyeonggi-do 17035\\
Republic of Korea} {dhshin@hufs.ac.kr}
\address{
Department of Mathematics Education\\
Pusan National University\\
Busan 46241\\Republic of Korea}
{dsyoon@pusan.ac.kr}

\end{document}